\title[Demuth's path to randomness]
{Demuth's path to randomness}
\author{Anton\'in Ku\v{c}era}
\address{Faculty of Mathematics and Physics, Charles University in Prague, Prague, Czech Republic}
\email{kucera@mbox.ms.mff.cuni.cz}
\author{Andr\'e Nies}
\address{Department of Computer Science, University of Auckland, Auckland, New Zealand}
\email{andre@cs.auckland.ac.nz}
\author{Christopher P.\ Porter}
\address{Department of Mathematics, University of Florida,
Gainesville, FL 32611-8105, USA}
\email{cp@cpporter.com}
\thanks{Nies is partially  supported by the Marsden Foundation of New Zealand under grant 09-UOA-184. Porter is supported by the National Science Foundation under grant OISE-1159158 as part of the International Research Fellowship Program.}
\newtheorem{Theorem}{Theorem}[section]
\newtheorem{theorem}{Theorem}[section]
\newtheorem{proposition}[Theorem]{Proposition}
\newtheorem{cor}[Theorem]{Corollary}
\newtheorem{lemma}[Theorem]{Lemma}
\theoremstyle{definition}
\newtheorem{definition}[Theorem]{Definition}
\newtheorem{remark}[Theorem]{Remark}
\newcommand{\DII}{\Delta^0_2}
\newcommand{\NN}{{\mathbb{N}}}
\newcommand{\RR}{{\mathbb{R}}}
\newcommand{\QQ}{{\mathbb{Q}}}
\newcommand{\sub}{\subseteq}
\newcommand{\sN}[1]{_{#1\in \NN}}
\newcommand{\uhr}[1]{\! \upharpoonright_{#1}}
\newcommand{\ML}{Martin-L{\"o}f}
\newcommand{\SI}[1]{\Sigma^0_{#1}}
\newcommand{\PI}[1]{\Pi^0_{#1}}
\newcommand{\bi}{\begin{itemize}}
\newcommand{\ei}{\end{itemize}}
\newcommand{\bc}{\begin{center}}
\newcommand{\ec}{\end{center}}
\newcommand{\Halt}{{\ES'}}
\newcommand{\ES}{\emptyset}
\newcommand{\ria}{\rightarrow}
\newcommand{\tp}[1]{2^{#1}}
\newcommand{\ex}{\exists}
\newcommand{\fa}{\forall}
\newcommand{\la}{\langle}
\newcommand{\ra}{\rangle}
\newcommand{\Kuc}{Ku{\v c}era}
\newcommand{\fao}[1]{\forall #1 \, }
\newcommand{\Opcl}[1]{[#1]^\prec}
\newcommand{\n}{\noindent}
\newcommand{\vsps}{\vspace{3pt}}
\newcommand{\vsp}{\vspace{6pt}}
\newcommand{\leb}{\mathbf{\lambda}}
\newcommand{\lwtt}{\le_{\mathrm{wtt}}}
\newcommand{\sss}{\sigma}
\newcommand{\lland}{\, \land \, }
\newcommand\+[1]{\mathcal{#1}}
\newcommand{\Op}{{\mathit{Op}}}
\newcommand{\ol}{\overline}
\newcommand{\ul}{\underline}
\newcommand{\DA}{\downarrow}
\def\uh{\upharpoonright}
\newcommand{\russ}{\mathsf{RUSS}}
\newcommand{\bish}{\mathsf{BISH}}
\renewcommand{\int}{\mathsf{INT}}
\begin{document}

\maketitle

\begin{abstract} 
Osvald Demuth (1936--1988) studied constructive analysis from the viewpoint of the Russian school of constructive mathematics. In the course of his work he introduced various notions of effective null set which, when phrased in classical language, yield a number of  major algorithmic randomness notions.  In addition, he proved several results connecting constructive analysis and randomness that were rediscovered only much later.  

In this paper, we trace the path that took Demuth from his constructivist roots to his deep and innovative work on the interactions between constructive analysis, algorithmic randomness, and computability theory.  We will focus specifically on (i) Demuth's work on the differentiability of Markov computable functions and his study of constructive versions of the Denjoy alternative, (ii) Demuth's independent discovery of the main notions of algorithmic randomness, as well as the development of Demuth randomness, and (iii) the interactions of truth-table reducibility, algorithmic randomness, and semigenericity in Demuth's work.

\end{abstract}
 
 \tableofcontents
 
\section{Introducing Demuth}

 The mathematician Osvald Demuth  worked primarily on 
constructive analysis in the Russian style, which was initiated by Markov, \v{S}anin, Ce{\u{\i}}tin, and others in the 1950s. Born in 1936 in Prague, Demuth graduated from the Faculty of Mathematics and Physics at Charles University in Prague in 1959 with the  equivalent of a master's degree. 
Thereafter he studied  constructive mathematics under the  supervision of A.A.\ Markov Jr. in Moscow, where he successfully defended his doctoral thesis in 1964.  

After completing his doctoral studies with Markov, he returned to  Charles University, completing his Habilitation in 1968.  He remained at Charles University until the end of his life in 1988.  During this period of time, he faced intense persecution for his political views.  After the Russian invasion of the Czech Republic in 1968, Demuth left the Communist Party in 1969, as he was opposed to  the invasion.  The consequences of this decision for Demuth's career were dire.  From 1972-1978, he was forbidden to lecture at the university, although he continued his scientific work during this period.  Further, he was not permitted to travel abroad until 1987.  Lastly, he never achieved the rank of full professor, even though he clearly deserved this rank.

Despite these hardships, Demuth made a number of contributions to constructive analysis.  His most significant work revealed deep and interesting connections between notions of typicality naturally occurring in constructive analysis and various notions of algorithmic randomness.  These connections have only recently been rediscovered (see, for instance, \cite{Brattka.Miller.ea:nd} and \cite{Bienvenu.Hoelzl.ea:12a}).  He was also extremely productive, publishing nearly 60 research articles, including over 45 articles in the journal Commentationes Mathematicae Universitatis Carolinae during the period from 1968 to 1988.  That journal imposed a page limit of 30 pages  per year on him. Only a small number of Demuth's articles were written in collaboration with others.

Demuth's work, especially its connection to computability theory, has been largely underappreciated.  One goal of this paper is to remedy this situation.  We highlight the path that led Demuth from his initial work in constructive analysis to his later work that drew  heavily upon the techniques of computability theory, work in which notions of algorithmic randomness feature prominently.  As we will see, what is particularly noteworthy about this path is how Demuth's constructivism changed over time:  initially, he worked primarily with constructive objects, but in later work, he considered larger classes of non-constructive objects, such as $\DII$ reals (which he called \emph{pseudo-numbers}), then arithmetical real numbers, and eventually the collection of all real numbers.  Even in this latter phase, however, Demuth did not abandon his constructivist roots, still framing his results in the language of constructive analysis (albeit extended to allow for reference to non-constructive objects).

We also discuss a number of recent developments in algorithmic randomness that can be seen as extending Demuth's results.  We will concentrate in particular on developments that link computable analysis, specifically differentiability and almost everywhere behavior, with notions of randomness. While some of the results we survey are not Demuth's contributions, they fit naturally into his program of studying constructive analysis through the lens of computability theory. We have not discussed similar results that link notions of algorithmic randomness and ergodic theory.

The outline of this paper is as follows.  In $\S$\ref{sec-constructivism}, we will briefly discuss Demuth's constructivism as laid out in his survey ``Remarks on constructive mathematical analysis" \cite{Demuth.Kucera:79}, 
 co-authored with the first author of this paper and published in 1979. In $\S$\ref{sec-Markov}, we will review the basics of Markov computability of real-valued functions. $\S$\ref{sec-randomness} concerns the notions of algorithmic randomness that appear in Demuth's work, especially in his study of the differentiability of Markov computable functions and the Denjoy alternative.  In $\S$\ref{sec-dem-rand} we look closely at Demuth's own notions of randomness, nowadays known as \emph{Demuth randomness} and \emph{weak Demuth randomness}, outlining a number of facts that Demuth proved about these notions as well as some additional results that have been recently obtained.  In $\S$\ref{sec-rstt}, we consider Demuth's work on the interactions of truth-table reducibility, algorithmic randomness, and semigenericity, some of which was carried out jointly with the first author.  In $\S$\ref{sec-concluding}, we conclude with some remarks on Demuth's contributions.

 Interpreting Demuth's results can be a  difficult task that may involve some  guesswork.  This is  evident from a quick look at one of his   papers (see the electronic databases given in $\S$\ref{sec-concluding}), or even at the sample of  his writing  given in  Figure \ref{Fig:DemuthDef} below. The main problem is that the papers  are written in notationally heavy,   formal constructive language. The constructive results have to be re-interpreted classically. So, when we attribute a result to Demuth that was later proved independently, it does not diminish the credit due for this rediscovery the way that it would if Demuth had written his work in the customary classical language of computable analysis.
 
This paper is a substantially extended version of the conference paper \cite{Kucera.Nies:12}. Here we cover Demuth's research more broadly and in more detail.

Part of this work was carried out while \Kuc\ and Nies visited the Institute for Mathematical Sciences at the National University  of Singapore in June 2014. Nies acknowledges support through the Marsden fund of New Zealand.  Porter and \Kuc\ worked on this project at Universit\'e Paris Diderot in October 2013 and at Charles University in Prague in February 2014.  Porter acknowledges support from the National Science Foundation and Charles University.

\section{Demuth's constructivism}\label{sec-constructivism}

Before we survey Demuth's technical results, we will briefly review the basics of the Russian school of constructive mathematics and introduce Demuth's unique approach to constructive mathematics, which is laid out in the 1978 survey paper ``Remarks on constructive mathematical analysis" \cite{Demuth.Kucera:79}, written by Demuth and Ku\v cera.\footnote{We should note that at the time of the publication of~\cite{Demuth.Kucera:79}, its  second author accepted the basic principles of constructivism.  However, in the years that followed, he gradually turned to  the use of classical non-constructive methods.}

Constructive mathematics in the Russian school ($\russ$), like other versions of constructivism, namely Bishop's constructive mathematics ($\bish$) and Brouwer's intuitionism ($\int$), aims to put mathematics on a secure foundation.  Like $\bish$ and $\int$, in $\russ$ one rejects the general use of the law of excluded middle and thus double negation elimination.  However, what distinguishes $\russ$ from $\bish$ and $\int$ is the central role that the notion of effectivity plays in the theory.
 
But why emphasize this notion of effectivity?  According to Demuth and \Kuc{}, there is a historical reason, for as they write \cite[p. 81]{Demuth.Kucera:79},
\begin{quote}
From the historical point of view, the development of mathematics was substantially influenced by applications of mathematics where solutions of problems consisted, de facto, in transformation of particular information coded by words.
\end{quote}
They add, ``The means necessary for algorithmic processing of words are indispensable for any sufficiently rich mathematical theory" \cite[p. 81]{Demuth.Kucera:79}.  Demuth and \Kuc{} further held that such means also prove to be \emph{sufficient} for developing rich mathematical theories.

The distinctive features of $\russ$ as laid out by Demuth are the  following. 
\begin{enumerate}
\item The objects studied are constructive objects, coded as words in a finite alphabet.

\medskip

\item The Church-Turing Thesis is accepted (see \cite{Church:36a}, \cite{Turing:36a}). That is, the algorithms by means of which the words coding constructive objects are transformed are precisely the Turing computable functions, or equivalently the Markov algorithms (which is a formalism often used in the Russian school; see, for instance, \cite{Markov:54}).  

\medskip

\item The so-called ``constructive interpretation of mathematical propositions" as developed by \v{S}anin \cite{Sanin:58} is used.  According to this interpretation, the existential quantifier and disjunction are interpreted constructively.  That is, one is entitled to assert the existence of an object if there is an algorithmic procedure for constructing the object, and one is entitled to accept a disjunction of two formulas if there is an algorithmic procedure for determining which of the disjuncts is true.

\medskip

\item The following principle, known as Markov's Principle, is allowed:  if one has refuted the claim that some Markov algorithm $A$ does not accept a given input $x$, then one can conclude that $A$ accepts $x$.  In modern notation, 
\[
\neg\neg A(x){\DA}\;\Rightarrow \; A(x){\DA},
\]
where $A(x){\DA}$ means that the Markov algorithm $A$ halts on input $x$.  Thus, although double negation elimination is not permissible in general, it can be used in specific situations as laid out by Markov's Principle.

\end{enumerate}
For more details on the basic features of $\russ$, see \cite[Chapter 1]{Kushner:84} or the survey \cite{Kushner:99}.

Though Demuth is explicit about his commitment to the principles of $\russ$, in surveying his work, one will find that Demuth routinely appeals to objects and techniques that are well beyond the scope of what is constructively admissible, at least according to principles accepted by most constructivists.

One finds that Demuth became more and more lenient about the objects to which he appealed in his theorems.  That is, he gradually extended his domain of discourse to include more and more non-constructive objects.  Initially, he restricted his attention to the computable real numbers, but later he formulated certain of his results in terms of the more general collection of $\Delta^0_2$ reals.  Later yet he further extended his work to encompass the collection of arithmetical reals, and finally in his last papers, he even proved statements involving quantification over all real numbers.  How did Demuth account for this use of non-constructive objects?

Demuth's answer to this question is a subtle one.  He  formulated many of his later results in terms of the collection of all real numbers, proving, for example, that a number of computability-theoretic statements hold relative to any oracle. However, he was primarily concerned with these results insofar as they apply to arithmetical real numbers.  As he and \Kuc{} write,
\begin{quote}
It should be noted that we are interested, owing to the natural connection between concepts of constructive mathematical analysis and arithmetical predicates, only in the computability relative to jumps of the empty set. \cite[p. 84]{Demuth.Kucera:79}
\end{quote}
Here Demuth appeals to Post's theorem, according to which a predicate $P$ of the natural numbers is arithmetical if and only if there is some $n\in\NN$ such that $P$ is computable relative to $\emptyset^{(n)}$, the $n$-th Turing jump of the empty set.  Of course, one who strictly adheres to the principles of $\russ$ would find such an appeal to $\emptyset^{(n)}$   completely unacceptable.  

However, in Demuth's view, functions computable from $\emptyset^{(n)}$, where    $n\in\NN$, are still in some sense constructively grounded, as they ``can be represented on the basis of recursive functions by means of non-effective limits" according to the strong form of Shoenfield's Limit Lemma (see, for instance \cite[Corollary 2.6.3]{Downey.Hirschfeldt:book}).  Demuth and \Kuc{} further argue,

\begin{quote}
Without leaving [the] constructive program concerning effective processes we improve, by the use of relative computability, our ability to handle effective procedures.  The advantage of the improvement consists in both substantial simplification and clearness of formulations \cite[p. 84]{Demuth.Kucera:79}.
\end{quote}
Thus, Demuth studied notions connected to relative computability from a constructive point of view.  Although he himself was only concerned with arithmetical reals as potential inputs for algorithmic procedures, he left open the possibility of considering his results in terms of a broader class of inputs, even, potentially, the entire collection of real numbers.\footnote{It is known from private communication with the first author that Demuth would have also accepted hyperarithmetical reals, but he saw no need to work with them.}

Whether or not one agrees that Demuth was still being faithful to the basic principles of $\russ$, it is fair to characterize Demuth's approach as an \emph{extended constructivism}.  As we will see in the sections that follow, this extension was a gradual one, but it allowed him to bring techniques of constructive mathematics and classical computability theory together in interesting and often insightful ways.

\section{The basic definitions of computable analysis in the Rus\-sian school}\label{sec-Markov}

As Demuth worked primarily in the field of computable analysis, we will review the basic definitions of this subject.  In this paper,  these definitions  will be phrased   in the language of modern computable analysis, as developed, for instance, in  Brattka et al.\ \cite{Brattka.Hertling.ea:08},  Pour-El and Richards~\cite{Pour-El.Richards:89} and Weihrauch~\cite{Weihrauch:00}.   See  Aberth~\cite{Aberth:80}  for a more recent discussion of computable analysis in the Russian school.

One of the central notions of computable analysis in the Russian style is the notion of a \emph{constructive} real number. Its  modern analogue   is the  notion of a \emph{computable} real number.

\begin{definition}[Turing \cite{Turing:36a}] \label{def:compreal} {\rm A \emph{computable real number} $z$  is given by a computable Cauchy name,  i.e., a computable  sequence $(q_n)\sN n$ of   rationals converging to $z$ such that $|q_{k} - q_n | \le \tp{-n}$ for each $k \ge n$. } \end{definition} 

A sequence  $(x_n)\sN n$ of reals is computable if there is a computable double sequence $(q_{n,k})\sN {n,k}$ of rationals such that each $x_n$ is a computable real as witnessed by its Cauchy name $(q_{n,k})\sN k$.

As is well-known, one can equivalently define a computable real number in terms of a computable sequence of rationals $(q_n)\sN n$ and a  computable function $f:\mathbb{N}\rightarrow\mathbb{N}$ such that for every $n$ and every $k\geq f(n)$,
$|q_{f(n)}-q_k|\leq 2^{-n}$.  In modern terminology, $f$ is referred to as a \emph{modulus function}, whereas Demuth referred to $f$ as the \emph{regulator of fundamentality} of the sequence $(q_n)_{n\in\mathbb{N}}$.  We will write $\mathbb{R}_c$ to denote the collection of computable real numbers.

We should note one subtle difference between the constructive approach to computable real numbers and the modern approach. In the constructive approach, a computable real number is held to be a finite syntactic object, given by the pair consisting of the index of the sequence $(q_n)\sN n$ and the index of the modulus $f$.  However, in the modern approach, one need not take a computable real number to be some finite object.  Instead, a computable real number is simply a real number that has a computable name. This approach is compatible with a non-constructive view of real numbers, according to which they are completed totalities.

According to the Russian school, the continuum should be understood constructively, in the sense that it consists entirely of computable real numbers.  From this point of view, the continuum should not be seen as having gaps, since the constructive continuum is constructively complete, in the sense that every uniformly computable Cauchy sequence of real numbers with a computable modulus of Cauchy convergence converges to a computable real number (see \cite{Kushner:84}).

The notion of a Markov computable function   was central to Russian-style constructivism.  In the context of the constructive continuum, this is a natural notion of computability for a function.

For a computable Cauchy name $(q_n)\sN n$, if $\phi_i$ is a computable function such that $\phi_i(n)=q_n$ for every $n$, then we call $i$ the \emph{index} of $(q_n)\sN n$.  The following definition is due to Markov \cite{Markov:58}. In keeping with the constructive commitment to studying the transformation of finite words, a Markov computable function can   be seen as uniformly transforming an algorithm for computing a given computable real into another algorithm for computing the output real.

 \begin{definition} \label{def:Markov_computable}  {\rm A function $g:\mathbb{R}_c\rightarrow\mathbb{R}_c$ defined on the computable reals  is called \emph{Markov computable} if from any index for a  computable Cauchy name for $x$ one can compute an index for a  computable  Cauchy name for $g(x)$.}
\end{definition}

Demuth referred to Markov computable functions as \emph{constructive}. By a  $c$-\emph{function} he meant a constructive function that is constant on $(-\infty, 0]$ and on $[1, \infty)$.   This in effect restricts the domain to the unit interval.  Note that a constructivist cannot explicitly write this restriction to [0,1] into the definition since the relation $x\le y$ is not decidable for computable reals and, thus, it is not decidable  whether a given computable real is negative.
Hereafter we will only make reference to Markov computable functions (we will assume when necessary that a given Markov computable function is constant outside of the unit interval).

By a result of Ce{\u{\i}}tin (see, for instance, \cite{Cei56}, \cite{Cei59}, and \cite{Cei62}) and also a similar result of Kreisel, Lacombe and Shoenfield  \cite{KLS59}, 
each Markov computable function is continuous on the computable reals (with respect to the subspace topology on $\mathbb{R}_c$).  However,   since such a function  may only  be  defined   on the computable reals, it is not necessarily uniformly continuous. This was first shown by Zaslavski\v\i\ in \cite{Zaslavskii:62}.

Such an example of a Markov computable function that is not uniformly continuous can be produced by a typical construction in constructive analysis.  In this construction,  a Markov computable function is defined in terms of a $\SI 1$ class $\mathcal A$ that contains all computable reals. In a natural way $\mathcal A$ may be viewed as a c.e.\ set $S$ of rational intervals.
Now one may describe  a Markov computable function on computable reals by defining it on all rational intervals from $S$.  However, in general, for a computable real $z$ we cannot find exactly one interval from $S$ containing $z$. This is due to the fact that the relation $x \leq y$ is not decidable for computable reals and, thus, given an interval $[a,b]$ we cannot in general determine 
whether a given computable real belongs to $[a,b]$.
At best, for a computable real $z$ we can   find  rationals $a < b < c$ such that the  intervals $[a,b]$ and $ [b,c]$ belong to $S$ and $a < z < c$.  Thus, a Markov computable function $f$ has to be defined consistently and continuously 
on computable reals from any open interval $(a,c)$ such that $[a,b],[b,c]$ belong to $S$ for some $b$.

In this way one may construct a Markov computable function $f$ that is continuous on the computable reals but is not uniformly continuous:  Let $S$ be an infinite c.e.\ set of non-overlapping rational intervals with the property that for every computable real $x$ there is some $I\in S$ such that $x\in I$.  Let $(I_n)\sN n$ be an effective enumeration of the intervals in $S$.  We define $f$ to be 
piecewise linear on each interval from $S$, so that for each $n$, $f$ is equal to 0 at the endpoints of $I_n$ and takes its local maximum with value $n$ at the midpoint of $I_n$. 

Now, for any real $r$ not covered by any interval $I\in S$, $f$ takes arbitrarily large values at computable reals sufficiently close to $r$.  Hence $f$ is not uniformly continuous --- we cannot even continuously extend $f$ to any real outside the union  of the intervals in $S$.

The notion of a Markov computable function should be contrasted with the standard definition of a computable real-valued function from modern computable analysis, hereafter, a \emph{standard} computable function, which is essentially due to Turing \cite{Turing:36a} (although Borel had formulated the basic ideas of computability of real-valued functions in \cite{Borel:12}; see \cite{Avigad.Brattka:12} for a helpful discussion of these developments).  In  this approach, $f:\mathbb{R}\rightarrow\mathbb{R}$ is computable if 
\begin{itemize}
\item[(i)] for every computable sequence of real numbers $(x_k)_{k\in\NN}$, the sequence $(f(x_k))_{k\in\NN}$ is computable, and 
\item[(ii)] $f$ is \emph{effectively uniformly continuous}, i.e., there is a computable function $p:\NN\rightarrow\NN$ such that for every $x,y\in\mathbb{R}$ and every $n\in\NN$,
\[
|x-y|\leq 2^{-p(n)}\Rightarrow|f(x)-f(y)|\leq 2^{-n}.
\]
\end{itemize}
Every standard computable function is uniformly continuous, unlike the case with Markov computable functions, as mentioned above.
However, a significant portion of Demuth's work was concerned with uniformly continuous Markov computable functions.  Recall that a \emph{modulus of  uniform continuity} for a  function  $f$ is a function $\theta$ on positive rationals  such that $|x-y| \le \theta(\epsilon) $ implies $|f(x)- f(y) | \le \epsilon $ for each rational $\epsilon >0$. 

From a constructive point of view, it is reasonable to study uniformly continuous Markov computable functions with a \emph{computable} modulus of uniform continuity, which Demuth referred to as \emph{$\emptyset$-uniformly continuous functions}.  Note that the restricton of a standard computable real-valued function to the computable reals yields a $\emptyset$-uniformly continuous Markov computable function (see~\cite{Brattka.Hertling.ea:08,Weihrauch:00}).

Even if we consider uniformly continuous Markov computable functions with a \emph{non-computable} modulus, such a modulus cannot have arbitrarily high complexity.  Demuth proved that every uniformly continuous Markov computable function has a modulus that is computable in $\emptyset'$.  Demuth thus referred to classically uniformly continuous Markov computable functions as \emph{$\emptyset'$-uniformly continuous}.  Demuth proved a more general result about uniformly continuous Markov computable functions.  Before we state the result,  we need one additional definition.

Let $f:\mathbb{R}_c\rightarrow\mathbb{R}_c$ be a Markov computable function.
We define $R[f]:\mathbb{R}\rightarrow\mathbb{R}$ to be the classical function that is the maximal extension of $f$ that is continuous on its domain.  More precisely, for each non-computable $r\in[0,1]$, if $\ell=\lim_{x\rightarrow r}f(x)$ exists, then we set $R[f](r)=\ell$.  Otherwise, $R[f](r)$ is undefined. 

Recall that a real $r$ is $\Delta^0_3$ if and only if $r\leq_T\ES''$, 
i.e., there is a $\ES''$-computable sequence $(q_n)_{\sN{n}}$ of rationals converging to $r$ such that $|q_{k} - q_n | \leq 2^{-n}$ for each $k \geq n$.

\begin{theorem}[Demuth, Kryl, \Kuc{} \cite{Demuth.Kryl.ea:78}, \cite{Demuth:88a}]\label{thm-uniform-continuity}
Let $f$ be a Markov computable function.  Then the following are equivalent.
\begin{enumerate}
\item $f$ is   uniformly continuous.
\item $f$ is $\emptyset'$-uniformly continuous.
\item $R[f]$ is defined at all $\Delta^0_3$ reals.
\item $R[f]$ is defined at all reals.
\end{enumerate}
\end{theorem}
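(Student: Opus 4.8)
The plan is to prove the cycle of implications $(4)\Rightarrow(3)\Rightarrow(1)\Rightarrow(2)\Rightarrow(4)$, with the bulk of the effort going into $(1)\Rightarrow(2)$ (producing a $\emptyset'$-modulus from mere classical uniform continuity) and into $(3)\Rightarrow(1)$ (extracting uniform continuity from definedness of $R[f]$ at the $\Delta^0_3$ reals only). The implication $(4)\Rightarrow(3)$ is immediate since every $\Delta^0_3$ real is a real, and $(2)\Rightarrow(4)$ is routine: if $f$ is uniformly continuous with modulus $\theta$, then for any real $r\in[0,1]$ and any sequence of computable reals $x_n\to r$ the values $f(x_n)$ form a Cauchy sequence by the modulus, so $\lim_{x\to r} f(x)$ exists and $R[f](r)$ is defined; one should also check $R[f]$ so defined is genuinely continuous, which again follows from $\theta$.

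For $(1)\Rightarrow(2)$ I would argue as follows. Fix a rational $\epsilon>0$. Classical uniform continuity of $f$ means there \emph{exists} a rational $\delta>0$ with $|x-y|\le\delta \Rightarrow |f(x)-f(y)|\le\epsilon$ for all computable reals $x,y$; we must \emph{compute} such a $\delta$ from $\epsilon$ using $\emptyset'$. The key observation is that the predicate ``$\delta$ fails'' — i.e., $\exists$ computable reals $x,y$ (given by indices $i,j$) with $|x-y|\le\delta$ and $|f(x)-f(y)|>\epsilon$ — can be recognized by a $\emptyset'$-oracle, because from indices $i,j$ one computes indices for Cauchy names of $f(x),f(y)$ (Markov computability), and ``$|f(x)-f(y)|>\epsilon$'' together with ``$|x-y|\le\delta$'' (the latter read as ``$|x-y|<\delta'$'' for a slightly larger rational $\delta'$, to keep it $\Sigma^0_1$) is a $\Sigma^0_1$ statement in $i,j$. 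Hence ``$\delta$ works'' is a $\Pi^0_1$ predicate of $\delta$, and by (1) some rational $\delta$ satisfies it, so $\emptyset'$ can search through rationals $\delta=2^{-1},2^{-2},\dots$ and halt at the first one it confirms works. This yields a $\emptyset'$-computable modulus $\theta$. A small technical wrinkle: one has to be careful that the inequalities involving computable reals with only a Cauchy name can be rendered as $\Sigma^0_1$ conditions (strict inequalities are $\Sigma^0_1$, non-strict are $\Pi^0_1$), so the precise bookkeeping of strict versus weak inequalities in the definition of modulus must be handled by passing to slightly perturbed rational thresholds; this is routine but must be stated.

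The main obstacle is $(3)\Rightarrow(1)$: we are only told $R[f]$ is defined at $\Delta^0_3$ reals, and must conclude $f$ is (classically) uniformly continuous on \emph{all} computable reals, hence extends continuously everywhere. I would argue by contradiction: suppose $f$ is not uniformly continuous. Then for some rational $\epsilon>0$, for every $n$ there are computable reals $x_n,y_n$ with $|x_n-y_n|\le 2^{-n}$ but $|f(x_n)-f(y_n)|>\epsilon$. The crux is to locate a \emph{single} point $r$ of bad behavior that is moreover $\Delta^0_3$: since the pairs $(x_n,y_n)$ live in the compact interval $[0,1]$ we can, using $\emptyset''$ as an oracle, carry out a binary-search / nested-interval argument — at each stage asking $\emptyset''$ whether the $\Sigma^0_2$ statement ``there are computable reals $x,y$ in the current subinterval with $|x-y|$ small and $|f(x)-f(y)|>\epsilon$'' holds for the left half, then for the right half — to home in on a real $r\le_T\emptyset''$, i.e.\ a $\Delta^0_3$ real, near which the oscillation of $f$ stays at least $\epsilon$; this forces $\lim_{x\to r}f(x)$ not to exist (one produces two sequences of computable reals approaching $r$ with $f$-values separated by $\epsilon$), contradicting (3). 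The delicate parts are (a) verifying the relevant oscillation statements sit at the right level ($\Sigma^0_2$, so $\emptyset''$ can decide them), using that ``$x$ is a computable real with index $i$'' combined with the Markov-computability of $f$ keeps the quantifier complexity controlled, and (b) ensuring the constructed $r$ is genuinely not covered by the tidy behavior of $f$ — this is where one essentially reuses the picture from the Zaslavski\u\i-style example discussed above, where $f$ oscillates unboundedly near a point not covered by the generating $\Sigma^0_1$ class. Once $(3)\Rightarrow(1)$ is in hand the cycle closes and all four statements are equivalent.
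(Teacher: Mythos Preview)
Your overall strategy matches the paper's: the same cycle of implications, with the substance lying in $(1)\Rightarrow(2)$ and $(3)\Rightarrow(1)$, the latter proved by contraposition via a $\emptyset''$-guided search for a bad $\Delta^0_3$ point. However, there is a genuine technical gap in both of your hard implications, and the paper's proof shows exactly how to close it.

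The issue is your insistence on quantifying over \emph{indices of computable reals}. In $(1)\Rightarrow(2)$ you claim that ``$\delta$ fails'' is $\Sigma^0_1$ in the indices $i,j$; but the existential quantifier ranges over \emph{all} $i,j$, and for $i$ to name a computable real one needs $\phi_i$ total and Cauchy, a $\Pi^0_2$ condition. For invalid indices the Markov algorithm may return garbage, so the unguarded $\Sigma^0_1$ search can produce false positives, and you have not explained how to rule this out. The same problem recurs in your $(3)\Rightarrow(1)$, where you assert the oscillation predicate is $\Sigma^0_2$; over indices of computable reals this is again unjustified. The paper's remedy is simply to replace computable reals by \emph{rationals}: since $f$ is defined and (by Ce\u\i tin) continuous on the rationals, a modulus that works on rationals works on all computable reals, and the predicate ``there exist rationals $x,y\in[\sigma)$ with $|f(x)-f(y)|>2^{-n}$'' is genuinely $\Sigma^0_1$. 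This is the content of the passage from condition $(\diamond)$ to condition $(\diamond')$ in the paper.

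There is a second subtlety in your $(3)\Rightarrow(1)$. A naive binary search on the \emph{local} predicate (``this subinterval contains a bad pair'') can stall: a bad pair in $[\sigma)$ may straddle the midpoint, so neither $[\sigma 0)$ nor $[\sigma 1)$ need contain a bad pair with the same $\epsilon$. The paper handles this in two moves: first, it halves $\epsilon$ \emph{once} (from $2^{-n+1}$ to $2^{-n}$) when passing to $(\diamond')$, using the midpoint to split any straddling pair; second, it builds a c.e.\ (hence $\emptyset'$-computable) \emph{tree} of all strings $\sigma$ whose interval contains a bad pair and invokes K\"onig's lemma rather than a greedy descent --- the tree may have dead ends, but $(\diamond')$ guarantees it is infinite, and $\emptyset''$ computes a path. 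Your sketch gestures at nested intervals but does not address the straddling case or commit to the K\"onig-style argument; once you switch to rationals and formulate the tree as the paper does, the proof goes through.
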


\begin{proof}
($1\Rightarrow2$):  If $f$ is uniformly continuous, then $\ES'$ can compute a modulus of uniform continuity for $f$. 

The implications ($2\Rightarrow3$), ($2\Rightarrow4$), and ($4\Rightarrow3$) are immediate.  It remains to show ($3\Rightarrow1$).

 We claim that if $f$ is not uniformly continuous then there is a $\ES''$-computable real $x$ at which $R[f]$ is not defined, i.e., $f$ cannot be extended continuously to $x$.  For suppose that there is an $n$ such that 
\begin{itemize}
\item[($\diamond$)] for every $k$ there exist $x,y$ with $|x-y|< 2^{-k}$ and $|f(x)-f(y)|> 2^{-n +1}$.
\end{itemize}
For each $\sigma\in\{0,1\}^*$, the \emph{interval represented by $\sigma$}, denoted $[\sigma)$, is defined to be the half-open interval $[0.\sigma,0.\sigma+2^{-|\sigma|})$.
Now since $f$ is defined and continuous on all dyadic rationals the condition ($\diamond$) can be replaced  with the following:
\begin{itemize}
\item[($\diamond'$)] for every $k$ there exist a string $\sigma$ of length $k$ and rationals $x,y$ in the interval represented by $\sigma$ (so that $|x-y|<2^{-k}$) and $|f(x)-f(y)|> 2^{-n}$.
\end{itemize}
Indeed, for $x^*,y^*\in[0,1]$ such that $|x^*-y^*| < 2^{-k}$ and $|f(x^*)-f(y^*)|> 2^{-n +1}$, if $x^*,y^*$ do not belong to an interval $[\sigma)$ 
for some string $\sigma$ of length $k$ then
for some  $j$ such that $0 < j < 2^k,$ we have  $x^* < j/2^{-k} \leq y^*$. But then either $|f(x^*)-f(j/2^{-k})|> 2^{-n}$ or $|f(y^*)-f(j/2^{-k})|> 2^{-n}$. Now using the continuity of $f$ at $j/2^{k}$
we can easily  find $x,y$ for which the condition ($\diamond'$) holds.

We can use condition ($\diamond'$) to build a $\ES'$-computable tree such that $\sigma\in \{0,1\}^*$ is on the tree if and only if there are
$x,y$ which belong to the interval represented by $\sigma$ and  $|f(x)-f(y)|> 2^{-n}$.
By condition ($\diamond'$), this tree is infinite.  Thus, using $\ES''$ as an oracle, we can compute an infinite path through this tree, which corresponds to a real $x$.
At this real $x$, $R[f]$ is not defined.
\end{proof}

Using Theorem \ref{thm-uniform-continuity}, it is not difficult to verify that for every $\emptyset$-uniformly continuous Markov computable function $f$, the function $R[f]$ is a standard computable function. Indeed, since $f$ is $\emptyset$-uniformly continuous, it is clearly $\emptyset'$-uniformly continuous, and so by Theorem \ref{thm-uniform-continuity}, $R[f]$ is defined on all reals.  Since $R[f](x)=f(x)$ for all computable reals, condition (i) in the definition of a standard computable function is satisfied.  Furthermore, condition (ii) in the definition of a standard computable function is also satisfied, as $R[f]$ is $\ES$-uniformly continuous with the same modulus as $f$, since $f$ is $\ES$-uniformly continuous on a dense subset of $\mathbb{R}$.

Thus, just as $\emptyset$-uniformly continuous Markov computable functions can be obtained by restricting standard computable functions to $\mathbb{R}_c$, standard computable functions can be obtained by extending $\emptyset$-uniformly continuous Markov computable functions from $\mathbb{R}_c$ to $\mathbb{R}$ via the operator $R$.

We should note that Theorem \ref{thm-uniform-continuity} was not originally formulated in terms of the operator $R$ that extends a Markov computable function to a classical function but rather in terms of a more restricted operator.  For instance, in \cite{Demuth.Kryl.ea:78} and \cite{Demuth:80}, Demuth defined $\Op[f]$ to be the maximal continuous extension of $f$ that is defined on all \emph{arithmetical} reals.  However, in Demuth's later papers such as \cite{Demuth:88} and \cite{Demuth:88a}, we find the operator $R$ that behaves like $\Op$ except that for a Markov computable function $f$ the domain of the function $R[f]$ can potentially be defined on \emph{all} real numbers.  This is another example of Demuth's willingness to recast his results in terms of non-constructive objects.

%


\section{Notions of randomness in Demuth's work}\label{sec-randomness}
 As discussed in the introduction, Demuth considered a number of different notions of effective null set. They are equivalent to several major randomness notions that have been  introduced independently.

It is striking that Demuth never actually referred to random or non-random sequences. Instead, he characterized these classes in terms of non-approximability in measure and approximability in measure, respectively.   This reflects the fact that Demuth's motivation in introducing these classes differed significantly from the motivation of the recognized ``fathers" of algorithmic randomness.  Whereas the various randomness notions were introduced and developed by Martin-L\"of, Kolmogorov, Levin, Schnorr, Chaitin, and others in the context of classical probability, statistics, and information theory, Demuth developed these notions in the context of and for application in constructive analysis, where the notion of approximability plays a central role.

For the sake of readability, we will review the main definitions of algorithmic randomness that Demuth introduced. We  will refer to them  in the text that follows.  See \cite{Downey.Hirschfeldt:book} or  \cite{Nies:book} for details.  In the following, $\lambda$ denotes the Lebesgue measure.

\begin{itemize}
\item Martin-L\"of randomness (Martin-L\"of \cite{Martin-Lof:66}):  A Martin-L\"of test is a computable sequence of effectively open sets $(G_m) _{m \in \NN}$  such that $\leb (G_m)\leq 2^{-m}$ for every $m$.  A real $x\in[0,1]$ is \emph{Martin-L\"of random} if $x\notin\bigcap_{m\in\NN}G_m$ for every Martin-L\"of test $(G_m) _{m \in \NN}$.   A Solovay test \cite{Solovay:75}  is a computable sequence of effectively open sets $(G_m) _{m \in \NN}$  such that $\sum\leb (G_m)<\infty$.  A real $x$ passes the test  if $x\in G_m$ for at most finitely many $m$.  Solovay proved that a real  passes all Solovay tests if and only if it is Martin-L\"of random (see, e.g., \cite[Theorem 6.2.8]{Downey.Hirschfeldt:book} or \cite[Proposition 3.2.19]{Nies:book}). \\

\item Schnorr randomness (Schnorr \cite{Schnorr:71}):  A Schnorr test is a computable sequence of effectively open sets $(G_m) _{m \in \NN}$  such that (i) $\leb (G_m)\leq 2^{-m}$ for every $m$ and (ii) $\leb (G_m)$ is a computable real uniformly in~$m$. Furthermore, a real $x$ is \emph{Schnorr random} if and only if $z \notin \bigcap_m G_m$ for every Schnorr test $(G_m) _{m \in \NN}$. 
Note that every Schnorr test is a Martin-L\"of test. This implies that every Martin-L\"of random real is Schnorr random.  However, not every Martin-L\"of test is a Schnorr test, as we do not require that $\lambda(G_m)$ be computable in the definition of a Martin-L\"of test.  Moreover, there are Schnorr random reals that are not Martin-L\"of random.\\

\item Computable randomness (Schnorr \cite{Schnorr:71}): A computable martingale is a computable rational-valued function $M:\{0,1\}^*\rightarrow \mathbb{Q}$ such that $2M(\sigma)=M(\sigma0)+M(\sigma1)$ for every $\sigma\in \{0,1\}^*$.  A computable martingale $M$ succeeds on $X\in2^{\NN}$  if $\sup_nM(X\uh n)=\infty$.  We say that  $X\in2^\NN$ is \emph{computably random} if no computable martingale succeeds on $X$.  A  real $x\in[0,1]$ is computably random if the sequence $X$ such that $0.X=x$ is computably random (we assume here $x$ is not a dyadic rational, so  $X$ is unique).  Every Martin-L\"of random real is computably random.  Every computably random real is Schnorr random. Neither of the implications can be reversed.
\end{itemize}

Demuth   considered   notions of randomness other than  the four listed above. These notions  include what are now  known as Demuth randomness and weak Demuth randomness. They will be  introduced in Section \ref{subsec-denjoy-alternative}
and further discussed in Section \ref{sec-dem-rand}.

\subsection{Measurability and randomness}

We first consider the earliest appearance of a randomness notion in Demuth's work, which was in the context of constructive measurability.  

In the   papers \cite{Demuth:69} and \cite{Demuth:69a} published in 1969, Demuth defines what it means for a property to hold for ``almost every" computable real number. Demuth's definition is given in terms of what he calls $S_\sigma$ sets.  A computable sequence $\{H_n\}_{n\in\mathbb{N}}$ of non-overlapping intervals with rational endpoints is an \emph{$S_\sigma$ set} if $\sum_{n\in\mathbb{N}}|H_n|$ is a computable real number.

A property $\+ P$ of computable reals holds for ``almost every computable real" if there exists a computable sequence of $S_\sigma$ sets $(\mathcal{S}_n)_{n\in\NN}$ such that for every $n$, $\lambda(\mathcal{S}_n)\leq 2^{-n}$ and for any computable real $x$, if $x\notin\mathcal{S}_n$ for some $n$, then $x$ satisfies the property $\+ P$.  It is immediate that such a collection $(\mathcal{S}_n)_{n\in\NN}$ is a Schnorr test.

In formulating his definition of a property holding for almost every computable real, Demuth drew on earlier work of Ce\v\i tin and Zaslavski\v\i\ \cite{Zaslavskii.Ceitin:1962} from 1962.  In this context, it is interesting to note that the absence  of a universal Schnorr test follows from a result of Ce\v\i tin and Zaslavski\v\i\ from that paper, where it is proved (in different terminology) that a $\Pi^0_1$ class of computable measure has a computable path.

Demuth later defined what it means for a property $\+ P$ to hold   for ``almost every''  pseudo-number (i.e.,  $\DII$ real) in \cite[page 584]{Demuth:75}.  In \cite{Demuth.Kucera:79}, it is stated that such a definition can be obtained by directly relativizing to $\ES'$  the definition of a property holding for almost every computable real or ``without using relativized concepts."  It is not clear whether he took these two approaches to be equivalent.  The reference Demuth gives for the unrelativized definition contains the definition given in Figure~\ref{Fig:YtestDef}, but he does not state that this definition is equivalent to the relativized definition.

\begin{figure}[hbt] 
	\bc \scalebox{1.0}{\includegraphics{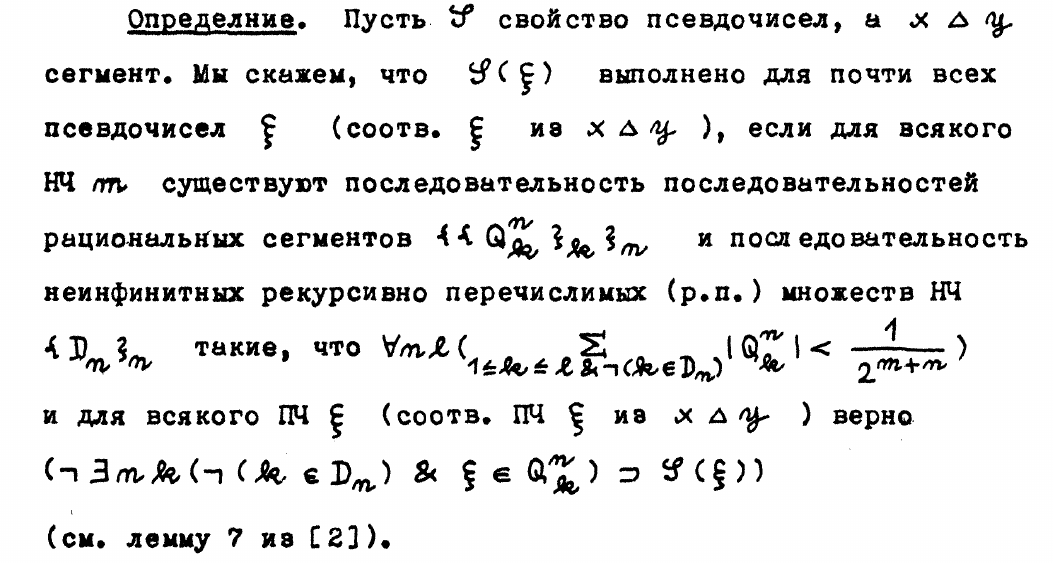}} \ec
		\vspace{-.5cm}
	\caption{\cite[page 584]{Demuth:75}: Definition of interval sequence   tests}  \label{Fig:YtestDef}
\end{figure}

 We  rephrase this definition in modern  language.  
Demuth introduces  a notion  of tests;  let us call them \emph{interval sequence  tests}. In the following let $m,r,k$ range over the set of positive integers.  
An interval sequence  test uniformly  in a number $m\in \NN$ provides a computable   sequence  of rational intervals $(Q^m_r(k))_{r,k \in \NN}$,  and a uniformly c.e.\ sequence of   finite  sets $(E^m_r)_{r\in\NN}$, such that   
\begin{equation}
\label{eqn:StestPart} \leb  \bigl( \bigcup \{Q^m_r(k) \colon \, k \not \in  E^m_r\} \bigr) \le \tp{-(m+r)}.
\end{equation}
The idea here is that each finite set $E^m_r$ consists of indices $\{k_1,\dotsc,k_n\}$ for rational intervals $Q^m_r(k_1),\dotsc,Q^m_r(k_n)$ such that
those $z\in Q^m_r(k)$ for some $k\notin E^m_r$ are contained in a fairly small set, i.e., one with measure less than $2^{-(m+r)}$.

 A real  $z$  \emph{fails}  the interval sequence test if  for each $m$ there is  $r$ such that for some $k  \not \in E^m_r$  we have $z \in   Q^m_r(k)$. In other words, for each $m$,  
\begin{equation}\label{eqn:StestFail}
z \in \bigcup_r \bigcup_{k \not \in  E^m_r}  Q^m_r(k).
\end{equation}
Note that the class in (\ref{eqn:StestFail})  has measure at most $\tp{-m}$, hence the reals~$z$ failing the test form a null set. If $z$ does not fail the test we say that   $z$ \emph{passes} the test.  
   Demuth says that a  property $\+ P $ holds for \emph{almost all} reals~$z$ if there is   an  interval sequence  test (depending on $\+ P$) such that $\+ P $ holds for all $z$ passing the test.  
 
We now show that this unrelativized definition of a property holding for almost every pseudo-number is equivalent to the relativization of Demuth's definition of a property holding for almost all computable reals.
 
 \begin{proposition}[with Hirschfeldt] Interval  sequence tests are uniformly equivalent to Schnorr tests relative to $\Halt$. That is, given  a test of one kind,  we can  effectively determine a test of the other kind so that a real    fails the first test if and only if it fails the second test. \end{proposition}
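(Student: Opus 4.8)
The plan is to give, for each of the two kinds of test, a single computable procedure transforming it into a test of the other kind that is failed by exactly the same reals. Starting from an interval sequence test $(Q^m_r(k))$, $(E^m_r)$, the natural candidate for a Schnorr test relative to $\emptyset'$ is $(G_m)_{m\in\NN}$ with $G^m_r=\bigcup\{Q^m_r(k):k\notin E^m_r\}$ and $G_m=\bigcup_r G^m_r$; in the converse direction one must, given a Schnorr test $(U_m)$ relative to $\emptyset'$, write each $U_m$ in the shape an interval sequence test prescribes, namely as a union over $r$ of sets of the form ``a computable sequence of rational intervals with finitely many of them retracted,'' the $r$-th such set having measure at most $2^{-(m+r)}$.

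From an interval sequence test to a Schnorr test relative to $\emptyset'$: since each $E^m_r$ is c.e., the relation $k\notin E^m_r$ is co-c.e., so $\emptyset'$ decides it uniformly; hence $\emptyset'$ enumerates the rational intervals making up $G_m$, so $(G_m)$ is a uniformly $\emptyset'$-effectively open sequence, and $\lambda(G_m)\le\sum_r 2^{-(m+r)}=2^{-m}$. The one point needing proof is that $\lambda(G_m)$ is computable relative to $\emptyset'$, uniformly. For fixed $m,r$ put $\hat G^m_{r,s}=\bigcup\{Q^m_r(k):k\le s,\ k\notin E^m_{r,s}\}$, a finite union of rational intervals whose measure is a rational computable from $(m,r,s)$. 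Once the (monotone) enumeration of $E^m_r$ has settled, $\hat G^m_{r,s}$ increases with $s$ to $G^m_r$, so the symmetric differences $\hat G^m_{r,s}\triangle G^m_r$ have measure tending to $0$ and $\lambda(\hat G^m_{r,s})\to\lambda(G^m_r)$; the same reasoning applied to finite unions gives $\lambda\bigl(\bigcup_{r\le R}G^m_r\bigr)$ as a limit of computable rationals, uniformly in $m$ and $R$. Since $\lambda(G_m)-\lambda\bigl(\bigcup_{r\le R}G^m_r\bigr)\le\sum_{r>R}2^{-(m+r)}=2^{-(m+R)}$, and since $\emptyset'$ can locate, for a prescribed precision, a stage at which a convergent computable sequence of rationals has come within that precision of its limit (by testing the $\Pi^0_1$ assertion that all later terms are within the precision of this one), a fast Cauchy name for $\lambda(G_m)$ can be computed from $\emptyset'$, uniformly in $m$. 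Thus $(G_m)$ is a Schnorr test relative to $\emptyset'$; unwinding the two definitions, $z$ fails the interval sequence test iff $z\in G_m$ for every $m$, i.e.\ iff $z$ fails $(G_m)$.

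From a Schnorr test relative to $\emptyset'$ to an interval sequence test: first reduce to a nested test by replacing $U_m$ with $\bigcap_{k\le m}U_k$, which is again $\emptyset'$-effectively open and has the same intersection; its measure is still computable relative to $\emptyset'$, because the measure of an $\emptyset'$-effectively open set is always left-c.e.\ relative to $\emptyset'$, while the identity $\lambda(A\cap U_m)=\lambda(A)+\lambda(U_m)-\lambda(A\cup U_m)$ together with induction shows it is also right-c.e.\ relative to $\emptyset'$. Passing now to a sufficiently sparse subsequence (legitimate since the test is nested) we may assume $\lambda(U_m)$ is far smaller than $2^{-m}$. To present $U_m$ in the required form one uses that $\emptyset'$ is c.e.: committing to the first $r$ bits of $\emptyset'$ makes the enumeration of the portion of $U_m$ certified by those bits change only finitely often — at most once for each of the $r$ bits that can flip — so the intervals that must be withdrawn form a finite c.e.\ set, which becomes $E^m_r$; and one uses that $\lambda(U_m)$ is computable relative to $\emptyset'$ to subdivide these portions, splitting rational intervals as needed, so that the $r$-th layer has measure at most $2^{-(m+r)}$, the abundant slack between $\lambda(U_m)$ and $\sum_r 2^{-(m+r)}=2^{-m}$ making room for this. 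The outcome is a computable array $(Q^m_r(k))$ and uniformly c.e.\ finite sets $(E^m_r)$ whose associated test is failed by $z$ precisely when $z\in U_m$ for all $m$.

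The harder direction is the second, and its technical heart — which I expect to be the main obstacle — is to carry out both pieces of bookkeeping within one construction that produces honestly computable interval sequences and honestly c.e.\ finite retraction sets. The layering by number of committed bits of $\emptyset'$ is what confines each layer to finitely many retractions, and here the c.e.-ness of $\emptyset'$ is essential; the subdivision by measure is what keeps the $r$-th layer below $2^{-(m+r)}$, and here one needs $\lambda(U_m)$ computable relative to $\emptyset'$ and the test already reduced to a nested, sparse one so that the geometric budget is not tight. Reconciling these — in particular choosing the subdivision so that the pieces are simultaneously in rigid ``computable-intervals-minus-finite-set'' form and small enough — is the step to be careful about; the first direction, by contrast, is routine apart from the observation that the monotonicity of the enumeration of the sets $E^m_r$ forces the measures $\lambda(G_m)$ to be computable relative to $\emptyset'$.
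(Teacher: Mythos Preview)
Your first direction is correct and is essentially the paper's argument with the details filled in: the paper simply asserts that $G_m$ is $\SI 1 (\Halt)$ uniformly and that $\leb(G_m)$ is $\Halt$-computable ``by (\ref{eqn:StestPart}),'' and your argument via the tail bound $2^{-(m+R)}$ together with $\Halt$ deciding the $\PI 1$ settling condition makes this explicit.

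Your second direction has a genuine gap. You correctly isolate the two ingredients --- a measure-based subdivision of $U_m$ into slices of measure at most $2^{-(m+r)}$, and a mechanism ensuring each slice has the form ``computable list of intervals minus a finite c.e.\ set'' --- but you try to make the single index $r$ carry both: layer $r$ is to be ``the portion of $U_m$ certified by the first $r$ bits of $\Halt$'' (so that at most $r$ bit-flips, hence finitely many retractions, occur) and simultaneously to have measure at most $2^{-(m+r)}$. These constraints do not align. The portion enumerated with oracle use at most $r$ is increasing in $r$ and can already have measure close to $\leb(U_m)$ for moderate $r$; no amount of nesting or thinning repairs this, and the number of bits of $\Halt$ actually needed to compute the $r$-th measure-slice is not $r$ and is not computably bounded in $r$. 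The paper decouples the two roles. First it carries out the measure subdivision entirely relative to $\Halt$: using the $\Halt$-computability of $\leb(G_m)$, it determines for each $m,r$ a finite $\Halt$-computable list of intervals $P^m_r(i)$, $u_r < i \le u_{r+1}$, of total measure at most $2^{-(m+r)}$, whose union over $r$ is $G_m$. Then it applies the Limit Lemma to this $\Halt$-computable data, obtaining computable approximations $P^m_r(i,t)$, $u_r(t)$ with only finitely many mind changes; $Q^m_r$ lists all intervals ever output by the approximation and $E^m_r$ is the finite c.e.\ set of indices retracted at a mind change. Your ``each bit of $\Halt$ flips at most once'' is exactly what drives the Limit Lemma, but it must be applied to the already-formed $\Halt$-subdivision rather than used as the subdivision itself.
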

 
 \begin{proof} Firstly, suppose we are given  an interval sequence test 
	\bc $(Q^m_r(k))_{r,k \in \NN}$, $(E^m_r)_{r\in\NN}$  ($m \in \NN$). \ec
	Let $G_m$ be the class in (\ref{eqn:StestFail}). Then $G_m$ is  $\SI 1 (\Halt)$ uniformly in $m$, and $\leb (G_m) $ is computable relative to $\Halt$ by (\ref{eqn:StestPart}).

  Secondly,  suppose we are given a Schnorr test  $(G_m) _{m \in \NN}$ relative to $\Halt$. Uniformly in $m$, using $\Halt$, we can compute $\leb (G_m)$ 
 for each $m \in \NN$. 
 Hence we  can for each $r, m  \in \NN$ determine $u_r\in \NN$ 
and, by possibly splitting into pieces some intervals 
from $G_m$,
a finite sequence of rational intervals $P^m_r(i)$,   $u_r  < i  \leq u_{r+1}$,
   such that 
$\lambda (\bigcup_{u_r  < i  \leq u_{r+1}} P^m_r(i) )  \leq  2^{-(m+r)}$  and 
$G_{m} = \bigcup_r \bigcup_{u_r < i \leq u_{r+1}} P^m_r(i)$. 
By the Limit Lemma we have a computable sequence of intervals $P^m_r(i,t)$ and a computable sequence $u_r(t)$, $t \in \NN$,  
 such that for large enough $t$,  $u_r(t) = u_r$ and $P^m_r(i,t)= P^m_r(t)$ for $i\le u_r$. 
 From this we can build an interval sequence test as required: the uniformly c.e.\ finite sets $E^m_r$ correspond to the  intervals we want to remove because of the mind changes of the approximations $u_r(t)$ and $P^m_r(i,t)$ for $i \le u_r(t)$. 
  \end{proof}
  
In later work \cite{Demuth:88}, Demuth defined a fully relativized version of measure zero sets.  For a set of natural numbers $B$, a set $\+ S\subseteq[0,1]$ has $B$-measure zero if there is a $B$-Schnorr test $(G^B_m) _{m \in \NN}$ such that $\+ S\subseteq\bigcap_m G^B_m$.  In keeping with his extended constructivism, Demuth only applied the notion of a $B$-measure zero set for sets $B$ such that $B\leq_T\emptyset^{(n)}$ for some $n$. Nevertheless, the definition is stated in full generality.  In fact, Demuth defined the more general notion of \emph{$B$-measurability} for a given $B\subseteq\NN$, of which the notion of a $B$-measure zero set is a special case.

\subsection{Demuth's version of \ML-randomness}

Several other randomness notions arose in Demuth's study of the differentiability of Markov computable functions.  It  was natural for Demuth to consider a broader class of reals than just the computable reals, as computable reals do not suffice to study the points of differentiability of these functions.  For instance, Demuth proved that the derivative of a Markov computable function at a computable real need not be computable.  He also proved the existence of an absolutely continuous Markov computable function that is not pseudo-differentiable at any computable real (where pseudo-differentiability is defined below).  Demuth further showed that this function is only pseudo-differentiable at Martin-L\"of random reals, and, as we will discuss in the next subsection, at all of them.
     
In a 1975 paper \cite{Demuth:75a}, Demuth   introduced  a randomness notion equivalent to  \ML\ randomness.  At the time of the publication of \cite{Demuth:75a}, Demuth was not aware of \ML's earlier definition in~\cite{Martin-Lof:66} dating from 1966.  Demuth originally considered only Martin-L\"of random pseudo-numbers, which he called \emph{$\Pi_2$-numbers}.  As  a constructivist, Demuth found it more natural to   define the \emph{non}-\ML\ random pseudo-numbers first. He called  them \emph{$\Pi_1$-numbers}.

\begin{definition} A $\Delta^0_2$ real $x$ is a \emph{$\Pi_1$-number} 
if there is a computable sequence of rationals $(q_n)_{\sN{n}}$ with $x= \lim_{n\rightarrow\infty} q_n$  and 
a computable sequence of finite computable sets $(C_m)_{\sN{m}}$  such that  
$\lambda( \bigcup_{n \notin C_m} [q_n,q_{n+1}])  < 2^{-m} $.
\end{definition}

We provide a sketch of the proof that a $\Delta^0_2$ real $x \in [0,1]$ is a $\Pi_1$-number if and only $x$ is not  \ML\  random. 
For one implication, from a computable sequence of rationals $(q_n)_{\sN{n}}$ with $x= \lim_{n\rightarrow\infty} q_n$  and 
a computable sequence of finite computable sets $(C_m)_{\sN{m}}$  such that  
$\lambda( \bigcup_{n \notin C_m} [q_n,q_{n+1}])  < 2^{-m}$, 
we can construct a \ML-test $(B_m) _{m \in \NN}$ by setting
\[
B_m = \{y : \ex n,k [|y - q_n| < 2^{-m-1-k} \ \wedge \ \# \{j : j \leq n, j \in C_{m+1} \} = k ]\}. 
\]
It is not hard to verify that $\lambda(B_m)\leq 2^{-m}$ for every $m$ and that $x \in \bigcap_{m \in \NN} B_m$.

For the reverse implication, let $(U_m) _{m \in \NN}$ be a universal \ML-test, i.e. a Martin-L\"of test $(U_m) _{m \in \NN}$ such that for any $z\in[0,1]$, $z$ is Martin-L\"of random if and only if $z\notin\bigcap_{m\in\NN}U_m$.  

Recall that $A\subseteq\{0,1\}^*$ is prefix-free if for every $\sigma,\tau\in\{0,1\}^*$, if $\sigma\in A$ and $\tau$ properly extends $\sigma$, then $\tau\notin A$.  Now let $(V_{m})_{m \in \NN}$ be a prefix-free subset of $\{0,1\}^*$ for which $U_m = \bigcup_{\sigma \in V_{m}} [\sigma)$ for any $m\in\NN$, where $[\sigma)$ is the interval $[0.\sigma,0.\sigma+2^{-|\sigma|})$ as in the proof of Theorem \ref{thm-uniform-continuity}.

Suppose  $x \in [0,1]$  is a $\Delta^0_2$ non-\ML\ random real and $x= \lim_{n\rightarrow\infty} q_n$ for a computable sequence of rationals $(q_n)_{\sN{n}}$ from $[0,1]$. We let 
\[
C_m = \{n : \mathrm{Hop}_m(q_n, q_{n+1}) \},
\] 
where the condition $\mathrm{Hop}_m(q_n, q_{n+1}$) means that $q_n$ and $q_{n+1}$ belong to intervals represented
by two strings from $V_m$ that  are   not    contiguous.  That is, $q_n \in [\sigma)$ and $q_{n+1} \in [\tau)$ for some $\sigma, \tau \in V_m$ such that $0.\sigma + 2^{-|\sigma|} \neq 0.\tau$ and  $0.\tau + 2^{-|\tau|} \neq 0.\sigma$.
$C_m$ is clearly a computable set uniformly in $m$. Since there is a $\sigma \in V_m$ such that $x \in [\sigma)$,  it is easy to verify that each $C_m$ 
is finite. This concludes our sketch of the equivalence. 
%
%

In addition to defining $\Pi_1$-numbers and their complement, the  $\Pi_2$-numbers,  Demuth constructed  a universal \ML-test \cite[Theorems 2 and 6]{Demuth:75a}, albeit in different terminology: he built a computable sequence of rational intervals $(\mathcal{K}^t_s)_{\sN{t,s}}$   for which 
	$\leb(\bigcup_s \mathcal{K}^t_s) <  2^{-t}$  for all $t$  and such that any $\Delta^0_2$ real $x$ is a $\Pi_1$-number if and only if  $x \in \bigcup_s \mathcal{K}^t_s$ for all $t$. Furthermore, he showed that the property of a $\Delta^0_2$ real $x$ to be a $\Pi_1$-number does not depend on the choice of a computable sequence $(q_n)_{\sN{n}}$ 
   with $x= \lim_{n\rightarrow\infty} q_n$ (see \cite[Corollary 1 of Theorem 5]{Demuth:75a}).

Demuth also studied an analogue of Solovay tests in \cite{Demuth:75a}.  As stated at the beginning of this section, a real is Solovay random if and only if it is Martin-L\"of random. Significantly, a restricted version of this result was also established by Demuth, who proved that a  $\Delta^0_2$ real $x$ is a $\Pi_2$-number if and only if it is Solovay random \cite[Corollary 2 of Theorem 5]{Demuth:75a}.  We should note that Demuth's proof is easily extendible to hold for all reals, not just the $\Delta^0_2$ reals.

In \cite{Brodhead.Downey.ea:12}, it is shown that the \ML\ random $\DII$-reals are precisely the finitely bounded random reals, which are defined in terms of Martin-L\"of tests $(G_m)_{m\in\NN}$ where each $G_m$ is a finite union of intervals.  In \cite{Demuth:75a}, Demuth anticipated this result by proving that his definition of $\Pi_2$-number is equivalent to one given in terms of finitely bounded tests.

Another topic that Demuth investigated was the extent to which $\Pi_1$-numbers are preserved under basic arithmetical operations.  His main result on the subject, given in 
%
%
\cite{Demuth:75a}
 is that for every $\Delta^0_2$ real $\alpha$, there exist $\Pi_1$-numbers $\beta_1$ and $\beta_2$ such that $\alpha=\beta_1+\beta_2$.  Thus, since such an $\alpha$ can be a $\Pi_2$-number, the sum of two $\Pi_1$-numbers need not be a $\Pi_1$-number. 

Recall that a real $\alpha$ is left-c.e.\ if $\alpha$ is the limit of a computable, non-decreasing sequence of rational numbers. 
 Demuth further showed that the situation differs significantly if we consider pseudo-numbers that are left-c.e.: if $\beta_1$ and $\beta_2$ are left-c.e.\ $\Pi_1$-numbers, then $\beta_1+\beta_2$ is also a left-c.e.\ $\Pi_1$-number 
%
%
 (see \cite{Demuth:75a}). 
In other words if $\alpha$ is left-c.e.\ and Martin-L\"of random, and   $\alpha=\beta_1+\beta_2$ for left-c.e.\ reals $\beta_1$ and $\beta_2$, then at least one of $\beta_1$, $\beta_2$ is Martin-L\"of random.  This   is one of the earliest results in the theory of left-c.e.\ reals, a subject developed by Solovay in \cite{Solovay:75} that has been of much interest in recent years (see, for instance, \cite{Downey.Hirschfeldt.ea:02}  where Demuth's result is rediscovered, as well as Chapters 5 and 9 of \cite{Downey.Hirschfeldt:book}).

Beginning in 1978, Demuth was willing to countenance arithmetical reals. For instance, in \cite{Demuth.Kryl.ea:78}, Demuth, Kryl, and Ku\v cera prove that pseudo-numbers relative to $\emptyset^{(n)}$  correspond   to computable reals relative to $\emptyset^{(n+1)}$. In \cite{Demuth:82a} he refers to the arithmetical \emph{non}-\ML\ -random reals as $\+ A_1$ numbers and the arithmetical \ML\  random reals $\+ A_2$ numbers.  For instance, the definition of $\+ A_1$ can be found in  \cite[page 457]{Demuth:82a}. By then, Demuth knew of \ML's work: he defined $\+ A_1$ to be 
%
%
%
%
$\bigcap_k \Opcl {W_{g(k)}}$, 
where $g$ is a computable function determining a universal \ML\ test, and $\Opcl X$ is the set of arithmetical reals extending a string in $X$.
In the   English language papers such as \cite{Demuth:88}, the non-\ML\ random reals were called AP (for approximable in measure), 
and the \ML\ random reals were called NAP (for non-approximable in measure).

\subsection{Differentiability and randomness}  \label{ss:diff_rd}
Before discussing the application of Demuth's version of \ML-randomness to differentiability of Markov computable functions, we will review some definitions and provide some terminology.  For  a   function $f$,  the \emph{slope} at a pair $a,b$  of distinct reals in its domain~is 
   \[ S_f(a,b) = \frac{f(a)-f(b)}{a-b}.\] 
 Recall that if $z$ is in the domain of $f$ then  \begin{eqnarray*} \ol D f(z)  & = & \limsup_{h\ria 0} S_f(z, z+h) \\
   \ul D f(z) & = & \liminf_{h\ria 0} S_f(z, z+h)\end{eqnarray*} 
   Note that we allow the values $\pm \infty$. 
By the definition, a   function  $f$ is    differentiable at $z$ if $\ul D f(z) = \ol D f(z)$ and this value is finite.  We will denote  the derivative of $f$ at $z$ by $f'(z)$.

\label{ss:PseudoDer} If one wants to study the differentiability of   Markov computable  functions, one immediately runs into the problem that these functions are only defined on the computable reals. So one has to  introduce  upper and lower ``pseudo-derivatives'' at a real $z$, taking the limit of slopes close to $z$ where the function is defined.      This is precisely what Demuth did. 
   Consider  a function $g$ defined on $I_\QQ$,  the rationals in  $[0,1]$. For  $z \in [0,1]$ let
    
    \vsp

       $\widetilde  Dg(z) = \limsup_{h \to 0^+}  \{S_g(a,b)  \colon a, b \in I_\QQ   \lland  \, a\le z \le b \lland\, 0 <  b-a\le h\}$.
   
   \vsps
       
          $\utilde  Dg(z) = \liminf_{h \to 0^+}  \{S_g(a,b)  \colon a, b \in I_\QQ   \lland  \, a\le z \le b \lland\, 0 <  b-a\le h\}$.

   \begin{definition} {\rm We say that  a function $f$ with domain containing $I_\QQ$    is \emph{pseudo-differentiable at} $x$ if  $-\infty < \utilde  D f(x)  =   \widetilde D f(x) < \infty$, in which case the value $\utilde  D f(x)  =   \widetilde D f(x)$ will be denoted $f'(x)$.} \end{definition}

Since Markov computable functions are continuous on the computable reals,   it does not matter which    dense set of 
      computable reals one takes in the definition of these  upper and lower pseudo-derivatives.  For instance, one could take all computable reals, or only the dyadic rationals.  For a total continuous function~$g$, we have  $\utilde  Dg(z) = \ul Dg(z)$ and    $\widetilde  Dg(z) = \ol Dg(z)$.  The  last section of the extended  arXiv version  of~\cite{Brattka.Miller.ea:nd} contains more detail on pseudo-derivatives.

 Initially, Demuth studied the pseudo-differentiability of Markov computable functions at computable reals in the 1969 paper~\cite{Demuth:69b}.  For reasons mentioned above, in this limited setting, the resulting theory of pseudo-differentiability was not adequate. However, in a second 1975 paper~\cite{Demuth:75}, Demuth considered pseudo-differentiability at pseudo-numbers, which enabled him to prove a number of significant results
 on the pseudo-differentiability of Markov computable functions of bounded variation.
  The abstract of the paper,  translated literally, is as follows: 

\begin{quote} It is shown that every constructive function $f$  which cannot fail to be a function of weakly bounded variation is finitely   pseudo-differentiable on each $\Pi_2$-number.

 For almost every pseudo-number $\xi$ there is a pseudo-number which is a value of pseudo-derivative of the function $f$  on $\xi$, where the differentiation is almost uniform. \end{quote}

We rephrase Demuth's result in modern terminology.
\begin{theorem}[\cite{Demuth:75}]\label{thm:Demuth-BV}
Let $f$ be a Markov computable function of bounded variation. 
\begin{itemize}
\item[(i)] $f$ is pseudo-differentiable at any $\Delta^0_2$ {\ML} random real. 
\item[(ii)] Furthermore, there is a Schnorr test relative to $\emptyset'$ such that 
for any $\Delta^0_2$ real $\xi$ passing the test, $\xi$ is {\ML} random,  $f'(\xi)$ exists, and  $f'(\xi)$ is a  $\Delta^0_2$ real which can be computed uniformly in $\emptyset'$ and the representation of $\xi$ as a $\Delta^0_2$ real.
\end{itemize}
\end{theorem}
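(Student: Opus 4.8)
\emph{Proof proposal.}
The plan is to read (i) as an effective version of Lebesgue's theorem on almost-everywhere differentiability of functions of bounded variation, and (ii) as a further ``almost uniform'' (Egorov-style) refinement carried out relative to $\ES'$. Throughout we may work with $f$ restricted to the rationals in $[0,1]$, since the choice of dense set of computable reals used in the pseudo-derivatives is immaterial. The first reduction is to monotone functions: writing $V_f$ for the variation function of $f$, we have $f=f_1-f_2$ with $f_1=(V_f+f)/2$ and $f_2=(V_f-f)/2$ both non-decreasing, and since $S_f(a,b)=S_{f_1}(a,b)-S_{f_2}(a,b)$ one gets $\widetilde D f(z)\le \widetilde D f_1(z)-\utilde D f_2(z)$ together with the symmetric bound, so $f$ is pseudo-differentiable at $z$ (with $f'(z)=f_1'(z)-f_2'(z)$) as soon as $f_1$ and $f_2$ both are. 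Moreover $g(b)-g(a)$ is a computable real uniformly in the rationals $a<b$ whenever $g$ is Markov computable, so for a non-decreasing $g$ conditions of the form ``$g(b)-g(a)>c(b-a)$'' or ``$<c(b-a)$'' are c.e. (We assume, as in Demuth's setting, that the variation of $f$ is available effectively enough that $V_f$ is Markov computable; for the classical rephrasing this is harmless.) Thus it suffices to prove (i) for a non-decreasing Markov computable $g$ with $g(1)-g(0)=V$.

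For such a $g$ there are exactly two obstructions to pseudo-differentiability at an irrational $z$: either $\widetilde D g(z)=\infty$, or $\utilde D g(z)<\widetilde D g(z)$. For the first, put $W_c=\bigcup\{(a,b):a<b\text{ rational},\ S_g(a,b)>c\}$. This is $\SI{1}$ uniformly in the rational $c$; it contains $\{z:\widetilde D g(z)>c\}$ (a witnessing rational interval around $z$ exists, since an \ML\ random $z$ is irrational); and by the Vitali covering lemma together with monotonicity, $\lambda(W_c)\le 5V/c$. Choosing any integer $N>V$, the sequence $(W_{5N2^m})_{m\in\NN}$ is a \ML-test whose intersection contains $\{z:\widetilde D g(z)=\infty\}$; crucially we never compute $V$, only use that some such $N$ exists, as an \ML\ random real passes \emph{every} \ML-test. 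For the second obstruction, fix rationals $p<q$ and let $E_{p,q}=\{z:\utilde D g(z)<p<q<\widetilde D g(z)\}$. Mimicking the classical proof that $\lambda^*(E_{p,q})=0$ by two nested Vitali covering steps, I would build uniformly $\SI{1}$ sets $V_0\supseteq V_1\supseteq\dots\supseteq E_{p,q}$ with $\lambda(V_n)\le (p/q)^n$: given $V_n$, extract from $\{(a,b)\subseteq V_n:g(b)-g(a)<p(b-a)\}$ an essentially disjoint subfamily covering $E_{p,q}\cap V_n$ up to a null set (an effective Vitali covering lemma, legitimate because membership in this family is c.e.), and inside each interval of that subfamily take the union of all rational $(a',b')$ with $g(b')-g(a')>q(b'-a')$; monotonicity makes the $g$-increments telescope to $\lambda(V_{n+1})\le(p/q)\lambda(V_n)$. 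Re-indexing so measures fall below $2^{-m}$, and amalgamating over the countably many pairs $p<q$ by the usual $2^{-m}$-budget device, yields a \ML-test containing $\bigcup_{p<q}E_{p,q}$. Taking the union of this test, of $(W_{5N2^m})_m$, and of the analogous tests built for $f_1$ and $f_2$, we obtain one \ML-test; any \ML\ random $z$ escapes its intersection, so $\widetilde D f_i(z)=\utilde D f_i(z)$ are finite for $i=1,2$, whence $f$ is pseudo-differentiable at $z$. This is (i).

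For (ii) the target is the stronger almost-uniform statement. Work again with $f=f_1-f_2$ and the Lebesgue--Stieltjes measures $\mu_i=df_i$, with Lebesgue decompositions $\mu_i=h_i\,d\lambda+\nu_i$ where $h_i\in L^1$ and $\nu_i\perp\lambda$. Since $\ES'$ computes the Lebesgue measure of an $\ES$-effectively open set (left-c.e.\ reals being uniformly $\ES'$-computable), and more generally handles the left-c.e.\ data attached to $f$, the measures $\mu_i$, the densities $h_i$, and the singular parts $\nu_i$ are available $\ES'$-effectively. Two $\ES'$-Schnorr tests then arise: one catching the points where $\limsup_{h\to 0^+}\nu_i([z-h,z+h])/2h$ is large --- its level sets have Lebesgue measure $O(\nu_i([0,1])/c)$ by the Besicovitch/Vitali maximal-function estimate and are $\ES'$-effectively open with $\ES'$-computable measure, so off it the singular part contributes density $0$ at a $\ES'$-computable rate --- and one, via an effective-relative-to-$\ES'$ Lebesgue-point/Egorov argument for the $L^1$ density $h_i$, off which the absolutely continuous part converges at a $\ES'$-computable rate. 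Amalgamating these with the universal \ML-test --- which, since the measure of an $\ES$-effectively open set is $\ES'$-computable, is itself a $\ES'$-Schnorr test, using that $\ES'$-Schnorr-null sets are closed under effective unions (the measure of a union of two $\ES'$-effectively open sets is simultaneously $\ES'$-left-c.e.\ and $\ES'$-right-c.e., hence $\ES'$-computable) --- produces the desired Schnorr test relative to $\ES'$: a $\DII$ real $\xi$ passing it is \ML\ random, $f$ is pseudo-differentiable at $\xi$, and the slopes $S_f(a,b)$ over rational intervals shrinking to $\xi$ converge to $f'(\xi)$ with a modulus computable from $\ES'$, so $f'(\xi)=\lim S_f(a,b)$ is a $\DII$ real obtained uniformly from $\ES'$ and a $\DII$-name for $\xi$.

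The routine parts --- the two single-obstruction covering estimates and the reduction to monotone functions --- are standard once phrased effectively. The two places I expect real work are: (a) the effective Vitali covering lemma underlying the $E_{p,q}$ step, i.e.\ turning a merely c.e.\ family of rational intervals into a uniformly $\SI{1}$ near-disjoint subfamily that still covers $E_{p,q}$ up to a null set; and, more seriously, (b) in part (ii), pinning the modulus of differentiation down tightly enough that $f'(\xi)$ comes out genuinely $\DII$ (rather than merely $\ES'$-limit-computable, i.e.\ $\Delta^0_3$) and obtained \emph{uniformly}. This forces the exceptional sets of the almost-uniform statement to be built hand-in-hand with a single modulus and in such a way that the Schnorr condition --- the $\ES'$-computability of their measures --- lets $\ES'$ locate a level past which $\xi$ is non-exceptional; this coupling is the technical heart of (ii).
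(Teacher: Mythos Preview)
Your reduction to monotone functions in part (i) contains a genuine gap. You write that you ``assume, as in Demuth's setting, that the variation of $f$ is available effectively enough that $V_f$ is Markov computable''. But this is precisely what fails, and the paper is explicit on the point: by a result of Ce\v\i tin and Zaslavski\v\i, a Markov computable function of bounded variation need not split as a difference of two non-decreasing Markov computable functions. The variation $V_f(b)-V_f(a)$ over a rational interval $[a,b]$ is a supremum over partitions, hence only left-c.e.\ in general; so your $f_1,f_2$ are at best \emph{interval-c.e.}, and for such functions the condition ``$g(b)-g(a)<p(b-a)$'' you rely on in the $E_{p,q}$ covering step is not c.e. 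Worse, the paper notes (citing a recent result of Nies) that a non-decreasing interval-c.e.\ function need not be pseudo-differentiable at every \ML\ random real, so even granting your test constructions, the identity $f'(z)=f_1'(z)-f_2'(z)$ may subtract quantities that individually fail to exist at $z$.

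Demuth's route avoids the Jordan decomposition entirely. He introduces a truncation operator $[f,C]$ that linearises $f$ on a c.e.\ set $C$ of non-overlapping rational intervals, and proves a lemma (Lemma~\ref{truncation}) allowing him to choose $C$ so that $[f,C]$ is Markov computable and Lipschitz while the total length of the intervals in $C$ is as small as desired. He then proves directly, via polygonal approximation, that Markov computable Lipschitz functions are pseudo-differentiable at every \ML\ random real, and observes that $f=[f,C]$ off the intervals of $C$. Varying the truncation parameters yields a \ML-test $(C_k)_k$; any \ML\ random escapes some level of it, and at such a point $f$ inherits differentiability from the Lipschitz truncation.

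For part (ii), your Egorov-style plan via Lebesgue decomposition and maximal-function estimates is plausible in spirit but much heavier than necessary, and you rightly flag the risk that the derivative comes out only $\Delta^0_3$. The paper's (modern) argument is short and structural: since $f$ is Markov computable, $f'(z)\le_T z'$ whenever the pseudo-derivative exists; and there is a single Demuth test --- in particular a Schnorr test relative to $\ES'$ --- such that any $z$ passing it is generalized low, with a \emph{fixed} effective procedure computing $z'$ from $z\oplus\ES'$. For a $\DII$ real $\xi$ passing this test one gets $f'(\xi)\le_T\xi'\le_T\xi\oplus\ES'\le_T\ES'$, uniformly in the $\DII$-representation of $\xi$, which is exactly the uniform $\DII$ conclusion you were worried about securing.
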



%

To prove that a classical function $f$ of bounded variation is almost everywhere differentiable one usually 
expresses $f$ as a difference of two non-decreasing functions $f_1, f_2$. 
In the constructive setting, this approach no longer works, since a Markov computable function of bounded variation need not be expressible as a difference of two non-decreasing Markov computable functions, as proved by Ce\v\i tin and Zaslavski\v\i\ in \cite{Zaslavskii.Ceitin:1962}.

A  function $f$ is  called interval-c.e.\ \cite{Freer.Kjos.ea:14} if $f(0)=0$ and $f(y)-f(x)$ is a left-c.e.\ real, uniformly in rationals $x < y$.   If we wanted to  allow a little more leeway, any Markov computable function 
of bounded variation is expressible as $f_1 - f_2$, where $f_1, f_2$ are non-decreasing \emph{interval-c.e.\ functions}.  Unfortunately, functions of this more general type need not be differentiable at each Martin-L\"of random real as shown recently by Nies in
\cite[Theorem 7]{Nies:14}, so a different approach is needed.

For a c.e.\ set $C$ of closed rational intervals, we will write $\mathcal{H}(C)$ to denote that the intervals in $C$ are non-overlapping, i.e., they have at most endpoints in common, and  
that for any $k$ one can compute a stage $s$ in the enumeration of $C$ such that any interval enumerated into $C$ after stage $s$ has size less than $2^{-k}$.

The approach Demuth took to prove part (i) of Theorem \ref{thm:Demuth-BV} is roughly as follows.  First, for a given Markov computable function $f$ and a c.e.\ set $C$ of rational intervals such that $\mathcal{H}(C)$ holds, Demuth defines the function $[f,C]$ to be the Markov computable function such that for each interval $I$ in $C$, $[f,C]$ is equal to $f$ on the endpoints of $I$, is linear on the interior of $I$, and is equal to $f$ otherwise.

Demuth next proves the following lemma (see \cite[Lemma 4]{Demuth:75}).

\begin{lemma} \label{truncation} {\rm 
Let $f$ be a Markov computable function and $w, z$ computable reals such  that $w < z$, both $w,z$ are not equal to $(f(b) - f(a))/(b-a)$ for any rationals $a,b \in [0,1]$, and $f(1) - f(0) < z$.
Then there is a c.e. set $C$ of rational intervals with the following properties:
\begin{itemize}
\item[(i)] the condition $\mathcal{H}(C)$ holds;
\item[(ii)] $ w(b-a) <  f(b) - f(a) $ for any interval $[a,b]$ from $C$; and
\item[(iii)] $[f,C](y) - [f,C](x) < z(y-x)$ for any computable reals $x,y$ from $[0,1]$.
\end{itemize}
An analogous statement holds if replace each occurrence of $``<"$  with $``>"$ (including ``$w > z$" instead of ``$w < z$").  }
\end{lemma}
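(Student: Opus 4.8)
The plan is to build $C$ by a rising-sun/greedy selection of rational intervals and then read off the three conclusions. Write $g(x)=f(x)-zx$, so that the hypothesis $f(1)-f(0)<z$ reads $g(1)<g(0)$, and conclusion~(iii) becomes: $[g,C]$ is strictly decreasing on the computable reals of $[0,1]$ (linearising $f$ on an interval is the same as linearising $g$, since the two differ by an affine function). Put $\delta=z-w>0$ and call a rational interval $[a,b]\subseteq[0,1]$ a \emph{window interval} if $w(b-a)<f(b)-f(a)<z(b-a)$, i.e.\ $-\delta(b-a)<g(b)-g(a)<0$. Since $f$ is computable on the rationals and neither $w$ nor $z$ is a slope of $f$ between rationals, ``$[a,b]$ is a window interval'' is a decidable property of $(a,b)$. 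Every interval placed into $C$ will be a window interval, and this at once yields conclusion~(ii) and makes the linear piece of $[f,C]$ over that interval have slope strictly below $z$.

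The engine of the construction is the following covering fact: \emph{if $x_0<y_0$ in $[0,1]$ satisfy $f(y_0)-f(x_0)\ge z(y_0-x_0)$ then there is a window interval $[a,b]$ with $[a,b]\supseteq[x_0,y_0]$ and $[a,b]\ne[x_0,y_0]$.} This is an intermediate-value argument: deform the pair $(a,b)$ continuously from $(x_0,y_0)$, where the average slope of $f$ is $\ge z$, outward toward $(0,1)$, where it equals $f(1)-f(0)<z$; the average slope of $[a,b]$ is a continuous function of the deformation parameter, so just past the last parameter at which it equals $z$ it lies in the open interval $(w,z)$ --- here $w<z$ is exactly what makes this target nonempty --- and density of the rationals then produces a rational window interval with the required properties. (One may take $0<x_0\le y_0<1$, since in~(iii) it suffices to control rationals of $(0,1)$; endpoint cases are handled by extending on one side only.)

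Now construct $C$. First perturb: fix a rational $\varepsilon$ with $0<\varepsilon<\min(z-w,\,g(0)-g(1))$ (chosen, if necessary, so that $z-\varepsilon$ is also not a slope of $f$ between rationals, which excludes only countably many values), and set $h(x)=g(x)+\varepsilon x$. Consider the components of the open set $\{x:h(x)<\sup_{t\ge x}h(t)\}$. On its complement $h$ is non-increasing, hence $f$ has slope below $z$ there; each component carries a net descent of $h$, so linearising $f$ over a whole component would also give slope below $z$ --- so conclusion~(iii) would be automatic were we free to keep whole components. The only obstruction is conclusion~(ii): a component may be a steep descent of $f$. I fix this inside each component by keeping not the component but a maximal disjoint family of rational window intervals covering its ascending parts (these exist by the covering fact applied to sub-intervals of the component, using $w<z$ and continuity of $f$), and leaving the steep-descent residue untouched --- the residue, being a descent of $f$, has slope below $z$ and so is harmless for~(iii). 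Running the entire enumeration largest-first makes the kept lengths vanish effectively, so that for each $k$ one computes a stage past which no interval of length $\ge 2^{-k}$ is ever kept; with disjointness this gives $\mathcal{H}(C)$, i.e.\ conclusion~(i). The mind-change bookkeeping this requires --- ``window interval'' is decidable only after an unbounded search --- is exactly the allowance built into $\mathcal{H}(C)$. Finally, conclusion~(iii) follows by telescoping $[f,C](y)-[f,C](x)$ over the pieces of $[x,y]$ cut out by the endpoints of the $C$-intervals: each piece has $f$-increment strictly less than $z$ times its length, so the sum is strictly less than $z(y-x)$. The analogous statement with every $<$ replaced by $>$ (including $w>z$) follows by applying the above to $-f,-w,-z$.

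The step I expect to be the main obstacle is making the modification inside a shadow component \emph{both} correct and effective: one must cover all the ascending parts of the component by pairwise disjoint rational window intervals --- whose $g$-increments must land in the \emph{narrow} band $(-\delta\cdot\mathrm{length},0)$ --- while leaving only genuine descents uncovered, and one must do this so that the resulting $C$ is c.e.\ with effectively vanishing interval lengths. This is precisely where continuity of $f$ and the global-descent hypothesis $f(1)-f(0)<z$ are indispensable, through the intermediate-value calibration of the covering fact, and it is the reason the size-control clause is part of the definition of $\mathcal{H}(C)$.
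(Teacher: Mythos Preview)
The paper does not prove this lemma; it merely states it and cites Demuth's original 1975 article \cite{Demuth:75}. So there is no ``paper's proof'' to compare against, and your attempt must stand on its own.

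Your overall strategy --- a Riesz rising-sun construction applied to $h(x)=f(x)-(z-\varepsilon)x$ with $0<\varepsilon<z-w$ --- is the natural classical route, and you correctly isolate why the hypotheses matter: $w<z$ gives the target band of slopes nonempty width, $f(1)-f(0)<z$ gives the global calibration for an IVT-type covering, and the assumption that $w,z$ avoid all rational slopes makes the relevant inequalities decidable. That said, several steps in the write-up are either confused or not yet a proof.

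First, your analysis of the shadow components is backwards. For the set $\{x:h(x)<\sup_{t\ge x}h(t)\}$, each interior component $(a,b)$ satisfies $h(a)=h(b)$, hence the slope of $f$ on $[a,b]$ is \emph{exactly} $z-\varepsilon\in(w,z)$. These components are therefore already window intervals: condition~(ii) holds automatically, and the linearised slope is below $z$. There is no ``steep descent of $f$'' to worry about. The genuine obstructions are different: (a) the leftmost component $(0,d)$, if it exists, only satisfies $h(0)\le h(d)$, so its $f$-slope may exceed $z$; and (b) the component endpoints are typically \emph{not} rational (indeed, if $z-\varepsilon$ is chosen to avoid all rational slopes of $f$, they cannot be), so the components themselves are not eligible members of a c.e.\ set $C$ of rational intervals.

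Second, and more seriously, the effectivity is not established. Your ``covering fact'' relies on locating the \emph{last} parameter where a continuously varying slope equals $z$; that point need not be computable, and the constructive IVT does not deliver it. More importantly, you assert that running the enumeration ``largest-first'' yields $\mathcal{H}(C)$, but you give no mechanism for knowing, by a computable stage, that all rational window intervals of length $\ge 2^{-k}$ that will ever be selected have already appeared. You yourself flag this in your final paragraph, and it is indeed the heart of the matter: the lemma does not assume bounded variation, so there is no a priori measure bound to exploit, and the passage from the (non-effective) rising-sun decomposition to a c.e.\ family of \emph{rational}, \emph{non-overlapping} window intervals satisfying the size-control clause of $\mathcal{H}(C)$ requires a genuine construction that is absent here. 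As it stands, this is a plausible sketch of the classical picture, not a proof of the effective statement.
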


It follows that the function $g(x)=[f,C](x) -z\cdot x$ is strictly monotone on $[0,1]$ and $[f,C]$ is uniformly continuous of bounded variation.

In particular, if $f$ is a Markov computable function of bounded variation, then for any $k$, we can compute sufficiently large $w$ and $z$ with $w<z$ such that the measure of the intervals from the set $C$ guaranteed to exist by Lemma \ref{truncation} is less than $2^{-k}$ (and similarly for $z<w$).

Further, if we  apply Lemma \ref{truncation} twice, first to some appropriately chosen $w<z$ and then to some appropriately chosen $w'>z'$, we can truncate a Markov computable function of bounded variation $f$ into a Markov computable Lipschitz  function $[f,C]$ for some c.e.\ set $C$ of rational intervals with $\mathcal H(C)$.  Combining this with the statement in the previous paragraph, we can find a c.e.\ set $C_k$ of rational intervals effectively in $k$ such that $\mathcal{H}(C_k)$ holds and the measure of the intervals in $C_k$ is less than $2^{-k}$.

Next, by using a series of complicated approximations of Markov computable Lipschitz functions by Markov computable polygonal functions, Demuth proves that every Markov computable Lipschitz function is differentiable at every Martin-L\"of random real \cite[Theorems 1 and 2]{Demuth:75}.

Demuth then proves that if $f$ is a Markov computable function of bounded variation and $C$ is a c.e.\ set of rational intervals such that $\mathcal{H}(C)$ holds and such that the function $[f,C]$ is Lipschitz, the function $f-[f,C]$ is differentiable at any Martin-L\"of random real outside of any interval in $C$.  But since $f=[f,C]$ outside of any interval in $C$, it follows that $f$ is differentiable at any Martin-L\"of random real outside of any interval in of $C$.  

Lastly, using the fact that we can control the measure of the intervals in the above set $C$, we produce a uniformly c.e.\ collection $(C_k)_{k\in\NN}$ of rational intervals such that for every $k$, (a) $\mathcal{H}(C_k)$ holds, (b) the function $[f,C_k]$ is Lipschitz, and (c) the measure of the intervals in $C_k$ less than $2^{-k}$, where condition (c) implies that $(C_k)_{k\in\NN}$ defines a Martin-L\"of test. Thus for every Martin-L\"of random real $x$, there is some $k$ such that $x$ not in any interval in $C_k$, and since $[f,C_k]$ is differentiable at $x$, it follows that $f$ is differentiable at $x$ as well.  The concludes the proof of (i).


A more general version of Theorem \ref{thm:Demuth-BV} (i), which holds for \emph{all} Martin-L\"of random reals, has been recently reproved in \cite[Thm.\ 6.7]{Brattka.Miller.ea:nd} in an indirect way. It relies  on  a similar result for computable randomness: each Markov computable non-decreasing function is differentiable at each computably random real. The latter result is  in the same paper~\cite[Theorem 4.1]{Brattka.Miller.ea:nd}, taking into account the    extension of the theorem  in the last section of the arXiv version.

According to part (ii) of Theorem \ref{thm:Demuth-BV}, there is a single Schnorr test relative to $\emptyset'$ such that 
for any $\Delta^0_2$ real $\xi$ passing the test, $\xi$ is {\ML} random, $f'(\xi)$ exists and is $\Delta^0_2$, and $f'(\xi)$ can be uniformly computed from $\emptyset'$ and the representation of $\xi$ as a $\Delta^0_2$ real.

To prove this, Demuth carried out detailed calculations to produce the desired Schnorr test relative to $\emptyset'$.  We can reprove Theorem \ref{thm:Demuth-BV} (ii) as follows.  First, since $f$ is Markov computable, it
is easy to verify that  
\[
f'(z) \le_T z',
\]
 namely, the value of the pseudo-derivative of $f$ at $z$ is computable in the Turing  jump of $z$ whenever this pseudo-derivative exists.  Thus $f'(z)$ is $\DII$ whenever $z$ is low. Moreover, by \cite[Remark 10, part 3b]{Demuth:88}, or \cite[Theorem 3.6.26]{Nies:book},  there is a single Schnorr test relative to $\Halt$ (in fact, a Demuth test as defined in Definition~\ref{df:DemDef} below) such that each real $z$ passing it is generalized low, i.e., $z' \le_T z \oplus \Halt$.  Thus, the only reals $z$ for which $f'(z)$ is not $\DII$ are captured by this Schnorr test relative to $\emptyset'$.

Moreover, there is a fixed effective procedure for computing $z'$ from $z \oplus \ES'$ for any $z$ 
passing this Schnorr test (see the proof of Theorem 3.6.26 in \cite{Nies:book}). This yields the desired uniform computability of $f'(z)$  
from $\ES'$ and the representation of $z$ as a $\Delta^0_2$ real for any $\Delta^0_2$ real $z$ passing the test.\\

\subsection{The Denjoy alternative}\label{subsec-denjoy-alternative}

Demuth also closely studied the Denjoy alternative for Markov computable functions. One simple version of the  Denjoy alternative for a function $f$ defined on the unit interval says   that
	\begin{equation} \label{eqn:DA} \text{either $f'(z)$ exists, or $\ol Df(z) = \infty $ and $\ul Df(z) = -\infty$}. \end{equation}
	The full  result  is given in terms of  left and right upper and lower Dini derivatives, but we consider only the more compact version here.  
	
	 It is a consequence of the  classical  Denjoy (1907), Young (1912), and  Saks (1937)  Theorem    that for \emph{any} function  defined on the unit interval, the  Denjoy alternative holds at almost every $z$. Denjoy himself obtained the Denjoy alternative for continuous functions, Young for measurable functions,  and Saks for all functions. 
 For a proof see for instance Bogachev~\cite[p.\ 371]{Bogachev.vol1:07}.   
   

Here we formulate the Denjoy alternative  in terms of pseudo-derivatives.

\begin{definition} {\rm Suppose the domain of a partial function $f$ contains $I_\QQ$.  We say that the \emph{Denjoy alternative}  holds for $f$ at $z$  if 
	\begin{equation} \label{eqn:pseudoDA} \text{either $\widetilde Df(z) = \utilde Df(z) < \infty$, or $\widetilde Df(z) = \infty $ and $\utilde Df(z) = -\infty$}. \end{equation}
 } \end{definition}
 
\begin{equation}
 \utilde{D}f(z)
\end{equation}
\noindent This is equivalent to  (\ref{eqn:DA}) if the function is total and continuous. 


 For any  function $g\colon \, [0,1] \to \RR$, the reals $z$ such that  $\ul Dg(z) = \infty$  form a null set. This  well-known fact from classical analysis is usually proved via covering theorems, such as 
Vitali's or Sierpinski's.  (Cater~\cite{Cater:86} has given an alternative proof of a stronger fact: the reals $z$ where the right lower derivative $\ul D_+g(z)$ is infinite form a null set.)

Demuth was interested in determining which type of  null class is needed to make an analog of this  classic fact  hold for Markov computable functions (see Definition~\ref{def:Markov_computable}).    
The following notion can be found in~\cite{Demuth:80} (although a variant was given in the earlier \cite{Demuth:78}). As usual, for functions not defined everywhere we have to work with pseudo-derivatives as defined in Subsection~\ref{ss:diff_rd}.

\begin{definition} \label{df:DenjoyRandom}  {\rm  A  real $z \in [0,1]$ is called  \emph{Denjoy random} (or a \emph{Denjoy set}) if  for no Markov computable function $g$ do we have $\utilde D g(z) = \infty$. } \end{definition}

We should emphasize here that Demuth only used the term ``Denjoy set" in the preprint and final version of his paper 
 ``Remarks on Denjoy sets''~\cite{Demuth:preprint88}. The preprint was based on a talk   Demuth gave  at the Logic Colloquium 1988 in Padova, Italy (close to the end of communist era  in 1989, as it became  easier to travel to the  ``West").   He   later turned the preprint survey  into the paper~\cite{Demuth:90} with the same title, but it  contains only part of the  preprint survey.


As reported in the preprint survey~\cite[p. 6]{Demuth:preprint88}, in \cite{Demuth:78} it is shown that 
if  $z \in [0,1]$ is  Denjoy random, then for every $\ES$-uniformly continuous Markov computable $f \colon \, [0,1] \to \mathbb R$ the Denjoy alternative  (\ref{eqn:DA}) holds at~$z$.  
Combining this with  the results in \cite{Brattka.Miller.ea:nd} we can now determine precisely what Denjoy randomness is, and also obtain  a pleasing new characterization of computable randomness of reals through differentiability of standard computable functions. 

\begin{theorem}[\cite{Bienvenu.Hoelzl.ea:12a}]  \label{thm:DenjoyCR}  The following are equivalent for a real $z \in [0,1].$
\bi 

\item[(i)] $z$ is Denjoy random. 

\item[(ii)]  $z$ is computably random  

\item[(iii)]  for every standard computable $f \colon \, [0,1] \to \mathbb R$ the Denjoy alternative  (\ref{eqn:DA}) holds at~$z$.    \ei \end{theorem}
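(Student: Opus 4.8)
The plan is to prove the cycle of implications $(ii) \Rightarrow (iii) \Rightarrow (i) \Rightarrow (ii)$, with the last implication being the genuinely hard one. For $(ii) \Rightarrow (iii)$: assume $z$ is computably random, and let $f$ be a standard computable function. The result cited from \cite{Brattka.Miller.ea:nd} (Theorem 4.1, in its arXiv extension) says that every Markov computable non-decreasing function is differentiable at every computably random real, and more generally one extracts that every standard computable function satisfies the Denjoy alternative at every computably random real. Concretely, I would reduce to the case $f$ non-decreasing: if $f$ fails the Denjoy alternative at $z$, one extracts a slope that is bounded on a definite scale of intervals around $z$ from one side while the derivative from the other side misbehaves, and uses this to build a computable martingale that succeeds on $z$ — this is exactly the martingale-from-slopes argument underlying the Brattka--Miller--Nies differentiation theorem. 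Since $f$ is \emph{total} and continuous, the pseudo-derivatives $\widetilde D f(z)$ and $\utilde D f(z)$ coincide with the honest Dini derivatives, so (\ref{eqn:pseudoDA}) reduces to (\ref{eqn:DA}).

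The implication $(iii) \Rightarrow (i)$ is the easiest: I would argue the contrapositive. Suppose $z$ is not Denjoy random, so there is a Markov computable function $g$ with $\utilde D g(z) = \infty$. We may assume $g$ is $\ES$-uniformly continuous after a truncation in the style of Lemma~\ref{truncation} — or, more simply, one shows directly that the witnessing $g$ can be taken $\ES$-uniformly continuous. Then $R[g]$ is a standard computable function (by Theorem~\ref{thm-uniform-continuity} and the remark following it), and at $z$ we have $\ul D R[g](z) = \utilde D g(z) = \infty$ because $R[g]$ agrees with $g$ on a dense set of computable reals and the pseudo-derivative is computed along such reals. Since the lower derivative is $+\infty$, in particular it is not equal to $-\infty$, and the derivative $R[g]'(z)$ does not exist (it would have to be finite); hence the Denjoy alternative (\ref{eqn:DA}) fails for the standard computable function $R[g]$ at $z$, contradicting $(iii)$.

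The hard implication is $(i) \Rightarrow (ii)$: Denjoy random implies computably random. I would prove the contrapositive: if $z$ is not computably random, construct a Markov computable function $g$ with $\utilde D g(z) = \infty$. The starting point is a computable martingale $M$ that succeeds on $z$, i.e.\ $\sup_n M(z \uh n) = \infty$. The classical correspondence (Doob-style) turns a martingale into a function of bounded variation; here one defines, for a dyadic rational $0.\sigma$, a value $g(0.\sigma)$ built from the martingale increments along $\sigma$ so that the slope of $g$ across the dyadic interval $[\sigma)$ is essentially $M(\sigma)$ (suitably normalized). Because $M$ succeeds on $z$, the martingale values along the dyadic approximations of $z$ are unbounded, which forces the lower pseudo-derivative $\utilde D g(z)$ — computed as a liminf over slopes of small rational intervals straddling $z$ — to be $+\infty$. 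The delicate points are: (a) arranging that the slopes over \emph{all} small intervals around $z$ (not just the canonical dyadic ones) are large, which requires the martingale savings to be spread so that nearby dyadic cylinders also carry large slope; and (b) ensuring $g$ is genuinely Markov computable, i.e.\ defined consistently on overlapping rational intervals and continuous on $\mathbb{R}_c$, which is handled by the ``hopping'' device already described in the excerpt for defining Markov computable functions via c.e.\ sets of intervals. One standard fix for (a) is to first pass from $M$ to a martingale that wins with a savings property (monotone non-decreasing capital along $z$), e.g.\ via the savings trick, so that the accumulated capital is a lower bound for slopes on an entire neighborhood scale.

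I expect step (a) — forcing the lower pseudo-derivative rather than merely the upper one to be infinite — to be the main obstacle, since $\utilde D$ is a liminf and a naive martingale-to-function translation only controls slopes along the dyadic filtration, giving $\widetilde D g(z) = \infty$ but possibly $\utilde D g(z) < \infty$. The resolution is to exploit that a non-computably-random real is caught by a martingale that can be assumed to have capital growing to infinity \emph{along} $z$ with a savings account, so that once the capital exceeds $N$ it never drops below $N$ thereafter; translating capital into accumulated rise of $g$ then makes \emph{every} sufficiently fine interval around $z$ have slope at least on the order of the current capital, yielding $\utilde D g(z) = \infty$. This is essentially the content of the Bienvenu--H\"olzl--Nies argument cited as \cite{Bienvenu.Hoelzl.ea:12a}, combined with Demuth's 1978 result (reported in \cite{Demuth:preprint88}) that Denjoy randomness already suffices for the Denjoy alternative for $\ES$-uniformly continuous Markov computable functions, which supplies the other direction needed to close the equivalence with $(iii)$.
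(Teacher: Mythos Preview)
Your cycle runs opposite to the paper's: the paper proves $(i)\to(iii)\to(ii)\to(i)$. For $(i)\to(iii)$ it simply cites Demuth's 1978/1980 result. For $(iii)\to(ii)$ it applies $(iii)$ only to \emph{non-decreasing} standard computable $f$: since $\ul Df(z)\ge 0$, the Denjoy alternative at $z$ forces $f'(z)$ to exist, and then \cite[Thm.~4.1]{Brattka.Miller.ea:nd} yields computable randomness. For $(ii)\to(i)$ it argues the contraposition in the direction opposite to yours: given a Markov computable $g$ with $\utilde Dg(z)=\infty$, one builds a computable martingale succeeding on $z$ (citing \cite{Bienvenu.Hoelzl.ea:12,Bienvenu.Hoelzl.ea:12a}).

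Your ``hard'' implication $(i)\Rightarrow(ii)$ via a savings-property martingale is correct, and the paper explicitly mentions this alternative in the paragraph following its proof, noting that the construction from \cite[Thm.~3.6]{Brattka.Miller.ea:nd} even produces a \emph{standard} computable $g$ with $\ul Dg(z)=\infty$.

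The genuine gaps are in your two ``easy'' steps. For $(iii)\Rightarrow(i)$, your contraposition starts from an arbitrary Markov computable $g$ with $\utilde Dg(z)=\infty$ and tries to promote it to a standard computable function; but the claim that $g$ may be taken $\ES$-uniformly continuous is unjustified---Lemma~\ref{truncation} truncates functions of bounded variation to Lipschitz pieces, which is irrelevant here, and nothing prevents the Markov witness from being wildly unbounded away from $z$. For $(ii)\Rightarrow(iii)$, your ``reduce to the non-decreasing case'' is not a reduction at all: the Denjoy alternative for a general $f$ does not follow from differentiability of monotone functions in any direct way. Your closing paragraph compounds the confusion: invoking Demuth's $(i)\to(iii)$ together with your $(i)\Rightarrow(ii)$ gives only arrows \emph{out of} $(i)$ and cannot close the cycle. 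The paper avoids all of this by reversing the orientation, so that Demuth's theorem handles the passage from general Markov functions to the Denjoy alternative, and the monotone trick is used only to extract computable randomness from $(iii)$.
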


\n \begin{proof} (i)$\to$(iii) is Demuth's result (see \cite[Theorem 1]{Demuth:80} and \cite[Theorem  3]{Demuth:78}). 
For  (iii)$\to$(ii), let $f$ be a non-decreasing standard computable function. Then $f$ satisfies the Denjoy alternative at $z$. Since $\ul Df(z) \ge 0$, this means that $f'(z)$ exists. 
This implies that  $z$ is computably random by~\cite[Thm.\  4.1]{Brattka.Miller.ea:nd}.

The implication (ii)$\to$(i) is proved by contraposition: if $g$ is Markov computable and $\utilde Dg(z) = \infty$ then one builds a computable martingale that witnesses that $z$ is not computably random. See \cite[Thm.\ 15]{Bienvenu.Hoelzl.ea:12}  or~\cite{Bienvenu.Hoelzl.ea:12a} for the details of the proof. \end{proof}

 \begin{remark} \label{rem:Weaker} {\rm For the contraposition of the    implication (ii)$\to$(i), it suffices to use the weaker hypothesis on $g$ that $g(q)$ is a computable real uniformly in a rational $q \in I_\QQ$.  } \end{remark}



We do not fully understand how Demuth obtained (i)$\to$(iii) of the theorem;  a proof of this using  classical language would be   useful.  We can, however, obtain a direct proof of the contraposition of (i)$\to$(ii) that uses techniques from modern algorithmic randomness (which can be found in \cite[Thm.\ 3.6]{Brattka.Miller.ea:nd}): if $z$ is not computably random then a martingale~$M$ with the so-called ``savings property'' succeeds on (the binary expansion of) a real $z$.  Recall that $M$ has the savings property if $M(\tau)\geq M(\sigma)-2$ for every pair of strings $\tau\succeq\sigma$.  The authors now  build a standard computable function $g$ such that $\ul Dg(z) = \utilde Dg(z) = \infty$. 

Together with   Remark~\ref{rem:Weaker} we obtain: 

\begin{cor}  The following are equivalent for a real $z$:
\bi \item[(i)]  For no   function $g$ such that $g(q)$ is uniformly computable  for $q \in I_\QQ$ do we have $\utilde D g(z) = \infty$.
    \item[(ii)] $z$ is Denjoy random, i.e., for no Markov computable function $g$ do we have $\utilde D g(z) = \infty$.
    \item[(iii)] For no  standard computable function $g$ do we have $\ul D g(z) = \infty$. \ei 
    \end{cor}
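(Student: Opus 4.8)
The plan is to establish the three-way equivalence by a cycle (i)$\Rightarrow$(ii)$\Rightarrow$(iii)$\Rightarrow$(i). The first two implications are immediate from inclusions between the relevant classes of functions, together with the fact that $\utilde D g(z)$ depends only on the values of $g$ on $I_\QQ$; the content of the corollary lies entirely in the third implication, which combines Remark~\ref{rem:Weaker} with the savings-property argument described just before the statement. In effect all three conditions are repackagings of ``$z$ is computably random'', so the whole corollary rides on Theorem~\ref{thm:DenjoyCR} and Remark~\ref{rem:Weaker}.

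For (i)$\Rightarrow$(ii), note that every Markov computable function $g$ is in particular a function for which $g(q)$ is computable uniformly in $q\in I_\QQ$: from $q$ one computes (by the $s$-$m$-$n$ theorem) an index for the constant Cauchy name of $q$, and feeding this index to the algorithm witnessing Markov computability yields an index for a Cauchy name of $g(q)$. Since $\utilde D g(z)$ is defined purely in terms of slopes $S_g(a,b)$ with $a,b\in I_\QQ$, any Markov computable function witnessing the failure of (ii) already witnesses the failure of (i). For (ii)$\Rightarrow$(iii), recall that a standard computable function $g$ is total and continuous, so $\ul D g(z)=\utilde D g(z)$ as noted in Subsection~\ref{ss:diff_rd}, and that the restriction of $g$ to $\mathbb{R}_c$ is an $\emptyset$-uniformly continuous Markov computable function. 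Hence a standard computable $g$ with $\ul D g(z)=\infty$ produces a Markov computable function with $\utilde D g(z)=\infty$, contradicting (ii).

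The implication (iii)$\Rightarrow$(i) I would prove by contraposition. Suppose (i) fails, witnessed by some $g$ with $g(q)$ computable uniformly in $q\in I_\QQ$ and $\utilde D g(z)=\infty$. Remark~\ref{rem:Weaker} --- the contrapositive of the implication (ii)$\to$(i) of Theorem~\ref{thm:DenjoyCR} under exactly this weakened hypothesis on $g$ --- then yields that $z$ is not computably random. Now apply the direct proof of the contrapositive of (i)$\to$(ii) of Theorem~\ref{thm:DenjoyCR} recalled above: since $z$ is not computably random there is a martingale with the savings property succeeding on the binary expansion of $z$, and from it one builds a \emph{standard} computable function $h$ with $\ul D h(z)=\utilde D h(z)=\infty$. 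This $h$ witnesses the failure of (iii), closing the cycle.

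The only place where genuine work is hidden is the construction of the standard computable $h$ inside (iii)$\Rightarrow$(i); but this construction is already available from \cite[Thm.\ 3.6]{Brattka.Miller.ea:nd} and the discussion preceding the corollary, and the savings property is precisely what guarantees that $h$ is \emph{effectively uniformly continuous} --- hence a genuine standard computable function --- rather than merely continuous on $[0,1]$. Everything else is a matter of unwinding the definitions of the pseudo-derivative and of Markov computability and appealing to the already-established Theorem~\ref{thm:DenjoyCR} and Remark~\ref{rem:Weaker}, so I do not anticipate any serious obstacle.
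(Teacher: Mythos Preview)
Your proposal is correct and matches the paper's approach exactly: the paper derives the corollary immediately from Remark~\ref{rem:Weaker} combined with the savings-property construction of a standard computable function discussed just before the statement, with the remaining implications being the trivial inclusions you spell out. Your identification of (iii)$\Rightarrow$(i) as the only substantive step, proved by passing through ``$z$ is not computably random'' via Remark~\ref{rem:Weaker} and then invoking the construction from \cite[Thm.\ 3.6]{Brattka.Miller.ea:nd}, is precisely what the paper intends.
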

    This implies  that the particular choice of Markov computable functions in Definition  \ref{df:DenjoyRandom} is irrelevant. Similar equivalences stating that the exact level of effectivity of functions does not matter have been obtained in the article~\cite{Brattka.Miller.ea:nd}. For instance, the version of Theorem \ref{thm:Demuth-BV} (i) from \cite{Brattka.Miller.ea:nd} holds for any functions of bounded variation with any of the three particular effectiveness properties above: standard computable, Markov computable, and uniformly computable on the rationals. For non-decreasing \emph{continuous} functions, the three effectiveness properties coincide as observed in~\cite[Prop.\ 2.2]{Brattka.Miller.ea:nd}.
    
Because of Theorem~\ref{thm:DenjoyCR} one could assert  that  Demuth  studied  computable randomness indirectly via his Denjoy sets. Presumably he didn't know the  notion of computable randomness, which was  independently introduced by  Schnorr in~\cite{Schnorr:71} (see also \cite[Ch.\ 7]{Nies:book}  or \cite[Section 7.1]{Downey.Hirschfeldt:book}).    Demuth also proved in \cite[Thm.\ 2]{Demuth:88} that every Denjoy set that is AP (i.e., non-Martin-L\"of random) must be high. The analogous result for computable randomness was later obtained in~\cite{Nies.Stephan.ea:05}.   There, the authors also show a kind of   converse:  each high degree contains a computably random set that is  not \ML\ random. This fact was apparently not known to Demuth (although he did prove a closely related result, as we will see in $\S$\ref{subsec-semigenericity} in our discussion of semigenericity).

As mentioned above, Demuth knew that Denjoy randomness of a real $z$ implies the Denjoy alternative  at $z$ for all standard computable functions. It was thus natural for Demuth to ask the following question:
\begin{quote}
 \n \emph{How much  randomness   for a  real  $z$ is needed to ensure the Denjoy alternative at  $z$ for all Markov computable functions?} 
\end{quote}
Demuth  showed the following (see the preprint survey, \cite[p.\ 7, Theorem 5, item~4]{Demuth:preprint88}, which refers to \cite{Demuth:76}).

\begin{theorem}  There is a Markov computable function $f$ such that the Denjoy alternative fails at some Martin-L\"of random real~$z$.   Moreover,  $f$ is extendable to a continuous function on $[0,1]$.
\end{theorem}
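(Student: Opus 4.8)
The plan is to construct a Markov computable function $f$ that is classically uniformly continuous, so that by Theorem~\ref{thm-uniform-continuity} the extension $R[f]$ is a total continuous function on $[0,1]$ --- this is the ``moreover'' clause --- together with a \ML\ random real $z$ at which the Denjoy alternative fails. Since $R[f]$ is continuous and agrees with $f$ on $I_\QQ$, one has $\widetilde D f(z)=\ol D R[f](z)$ and $\utilde D f(z)=\ul D R[f](z)$, so it is equivalent to make the ordinary Denjoy alternative~(\ref{eqn:DA}) fail for the continuous function $R[f]$ at~$z$; concretely, it suffices to arrange that $f$ is not pseudo-differentiable at $z$ while it is not simultaneously true that $\widetilde D f(z)=+\infty$ and $\utilde D f(z)=-\infty$. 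I would aim for $\widetilde D f(z)=+\infty$ with $\utilde D f(z)$ finite. Note that $\utilde D f(z)\neq+\infty$ is automatic here, since $z$ is \ML\ random, hence Denjoy random.

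Several of the theorems already quoted pin down such an $f$ tightly, and this is what dictates the construction. It cannot be $\ES$-uniformly continuous, and neither can its restriction to any rational subinterval containing $z$, because that restriction would then extend via $R$ to a standard computable function and the Denjoy alternative would hold at the computably random --- hence \ML\ random --- point $z$ by Theorem~\ref{thm:DenjoyCR}; in particular $f$ cannot be Lipschitz near $z$ with a computable Lipschitz constant, so it must be steep near $z$. It cannot be of bounded variation on $[0,1]$, since then $f$ would be pseudo-differentiable at the \ML\ random real $z$ by Theorem~\ref{thm:Demuth-BV}(i). And it cannot be non-decreasing on any rational subinterval containing $z$, since it would then be differentiable at $z$ by \cite[Thm.\ 4.1]{Brattka.Miller.ea:nd}. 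Thus $f$ must be genuinely wild near $z$ and of infinite variation, yet ``effectively harmless'' away from $z$ --- which is possible only because $z$ itself is non-computable, so that the modulus of uniform continuity of $f$, though computable from $\Halt$ as Theorem~\ref{thm-uniform-continuity} allows, is not computable.

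The construction builds $z$ as a $\DII$ \ML\ random real together with $f$. For each $m$, a block is installed near the current approximation to $z$: a steep upward ramp of tiny width $w_m$ rising to a peak of height $h_m$, with $z$ lying just below the peak, followed by a gentle downward ramp of slope at least $-B$ for a fixed $B$ (so the block has height $h_m$ and total width $O(h_m)$); here $h_m\to 0$ rapidly (so $R[f]$ is continuous at $z$, hence uniformly continuous) and $h_m/w_m\to\infty$ (the steepness). Since $z$ is only given by a $\DII$ approximation, each block hangs on a movable marker, re-placed whenever the approximation to the relevant finite information about $z$ changes; each marker moves only finitely often, and superseded blocks land in disjoint dyadic intervals where they do no harm. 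The blocks near $z$ are nested, but with $z$ close to each peak, so $f$ is not monotone on any neighborhood of $z$ and the downward ramps bound $\utilde D f(z)$ from below. Far from $z$, say on $[3/4,1]$, one installs once and for all a standard Markov computable function of infinite variation (linear spikes of heights $1/k$ on shrinking intervals), making $f$ not of bounded variation on $[0,1]$, and a routine sub-construction devoting requirements to the levels of a universal \ML\ test makes $z$ \ML\ random. The verification is then straightforward: Markov computability holds because $f$ is defined on a c.e.\ family of rational intervals covering $\RR_c$ by the mechanism of $\S$\ref{sec-Markov}, with $h_m\to 0$ making $f(x)$ computable to any precision from a name for $x$; continuity of $R[f]$ and the ``moreover'' clause follow from $h_m\to 0$; $\widetilde D f(z)=\infty$ because for each large $m$ the interval from the base of the $m$-th upward ramp (below $z$) to its peak (above $z$) has width $w_m$ across which $f$ rises by $h_m$, of slope $h_m/w_m\to\infty$; and $\utilde D f(z)$ is finite because on a neighborhood of $z$ the only negative slopes come from the downward ramps (slope at least $-B$), while intervals from the base of a block to the end of its descent have slope $0$. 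Hence $f'(z)$ does not exist and, since $\utilde D f(z)\neq -\infty$, the Denjoy alternative fails at $z$, with no conflict with Theorem~\ref{thm:Demuth-BV} because $f$ is not of bounded variation on $[0,1]$.

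The hard part --- and the reason a fully classical write-up of Demuth's \cite{Demuth:76} would be valuable --- is the quantitative coordination of these competing demands at the \emph{true}, non-computable point $z$. One only ever has approximations to $z$, hence no computable control on where $z$ sits inside the block one is placing, so one must place each block using enough refined information about $z$ to guarantee that $z$ really does land in each surviving upward ramp just below its peak; at the same time the heights must tend to $0$ (continuity), the downward ramps must stay gentle (finiteness of $\utilde D f(z)$), the marker bookkeeping must be shown to converge with all leftover blocks harmless, the global variation must be kept infinite by the piece well away from $z$ (to evade Theorem~\ref{thm:Demuth-BV}), and $z$ must remain \ML\ random throughout. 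Balancing all of these simultaneously is the real content of the theorem.
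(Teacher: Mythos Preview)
The paper does not actually give a proof of this theorem; it only cites Demuth's original \cite{Demuth:76} and notes that Bienvenu, H\"olzl, Miller and Nies reproved it, taking $z$ to be the \emph{least element of an arbitrary effectively closed set of reals containing only \ML\ randoms}, so that $z$ is left-c.e. That is the main structural difference from your proposal. By fixing $z$ in advance as a left-c.e.\ \ML\ random, one decouples the construction of $z$ from that of $f$: the monotone approximation $z_0<z_1<\cdots\to z$ tells you exactly where to drop each block, no markers ever move, and ``$z$ is \ML\ random'' is automatic rather than a separate priority requirement. Your movable-marker scheme, where $z$ is built as a general $\DII$ \ML\ random simultaneously with $f$, can presumably be made to work, but it is precisely the coordination you flag as ``the hard part'' that the left-c.e.\ choice of $z$ eliminates.

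There is, however, one genuine gap in your reasoning, not merely a missing detail. You argue that $f$ must not be of bounded variation on $[0,1]$ (correct, by Theorem~\ref{thm:Demuth-BV}(i)), and you propose to \emph{achieve} this by bolting a standard infinite-variation gadget onto $[3/4,1]$, far from $z$. That does not help. Differentiability is local: if $a<z<b$ are rationals and your $f$ happens to have bounded variation on $[a,b]$, then the affine rescaling of $f\!\upharpoonright\![a,b]$ to $[0,1]$ is again Markov computable and of bounded variation, so by Theorem~\ref{thm:Demuth-BV}(i) it is pseudo-differentiable at the (still \ML\ random) image of $z$, hence $f$ is pseudo-differentiable at $z$ --- regardless of what $f$ does on $[3/4,1]$. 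So you must ensure infinite variation \emph{in every neighborhood of $z$}, i.e., the heights of your surviving blocks must satisfy $\sum_m h_m=\infty$. This is compatible with $h_m\to 0$ (take $h_m\asymp 1/m$), but it is \emph{not} compatible with ``$h_m\to 0$ rapidly'' if by that you mean summably. Once you impose $\sum h_m=\infty$ near $z$, the $[3/4,1]$ gadget is redundant and should be dropped.

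A minor slip: you write ``the computably random --- hence \ML\ random --- point $z$''. The implication goes the other way; you mean that $z$, being \ML\ random, is computably random, so Theorem~\ref{thm:DenjoyCR} applies.
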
 

This theorem has been reproved by Bienvenu, H\"olzl, Miller and Nies~\cite{Bienvenu.Hoelzl.ea:12,Bienvenu.Hoelzl.ea:12a}. In their proof, $z$ can be taken to be  the least element of  an arbitrary effectively  closed set of reals containing only Martin-L\"of random reals. In particular,  one can make $z$ left-c.e.

It was now clear to   Demuth that a randomness notion stronger than \ML's was needed. Such a notion  was introduced in  the  paper   ``Some classes of arithmetical reals'' \cite[p. 458]{Demuth:82a}. The definition is  reproduced in the preprint survey~\cite[p. 4]{Demuth:preprint88}.  In modern language the definitions are as follows.

\begin{definition} \label{df:DemDef} A \emph{Demuth test}  is a sequence of c.e.\ open sets $(S_m)\sN{m}$ such that $\fao m \leb (S_m) \le \tp{-m}$, and there is a function $f \colon \, \NN \to \NN$ with  $f\lwtt \Halt$ such that  $S_m = \Opcl{W_{f(m)}}$.

  A set~$Z$ \emph{passes} the test if $  Z\not \in   S_m$ for almost every~$m$.
    We say that~$Z$ is  \emph{Demuth random}  if~$Z$ passes  each Demuth test. \end{definition}

  Recall that  $f\lwtt \Halt$ if and only if $f$ is $\omega$-c.e., namely, $f(x) = \lim_t g(x,t)$  for some computable function~$g$ such that the number of stages~$t$ with  $g(x,t)\neq g(x,t-1)$ is computably  bounded in~$x$.  Hence the idea is that we can change the $m$-th component $S_m$ a computably bounded number of times.

\begin{figure}[hbt] 
	\bc \scalebox{.98}{\includegraphics{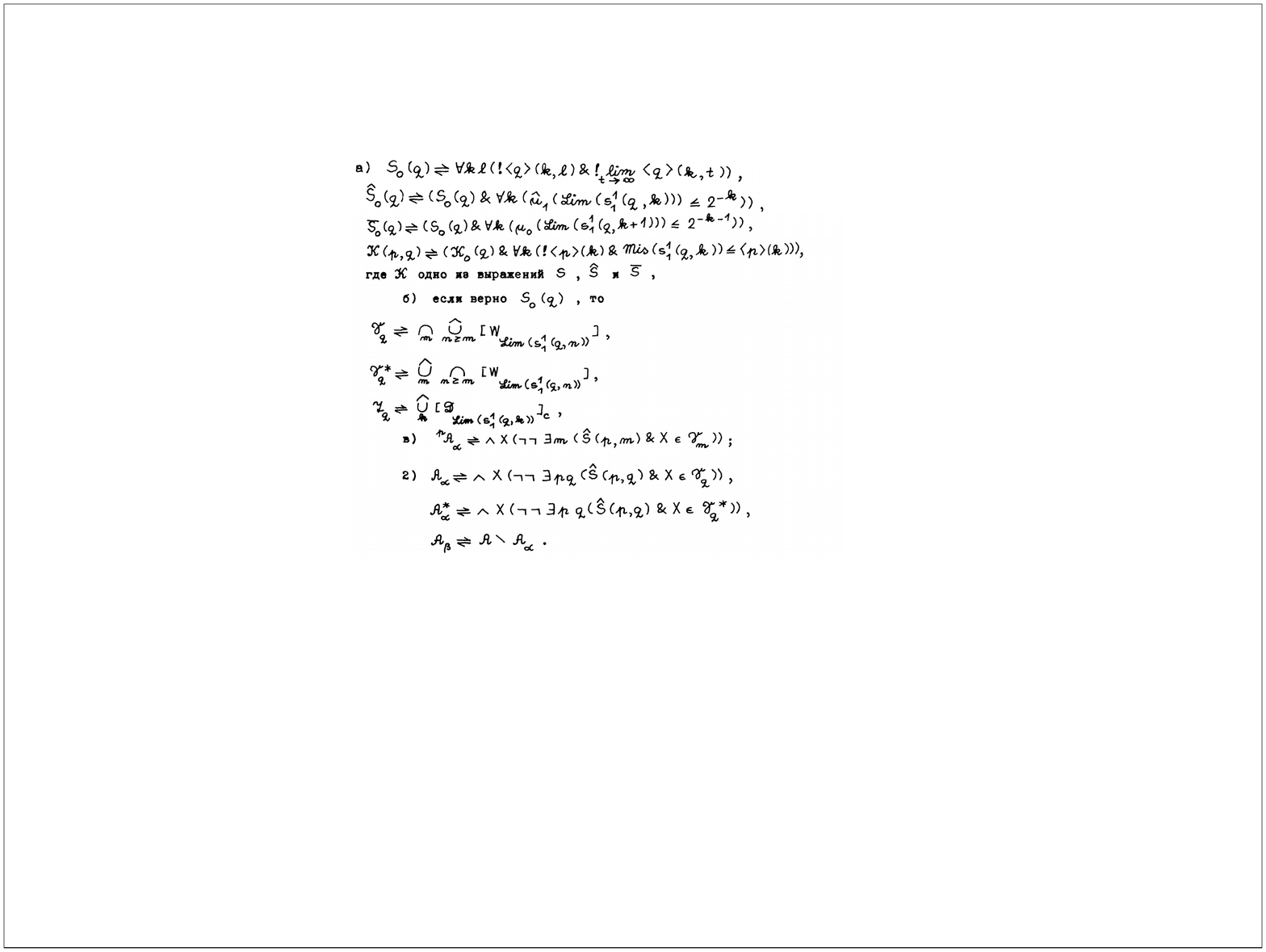}} \ec
	\vspace{-.5cm}
	\caption{\cite[p. 458]{Demuth:82a}: $\+ A_\beta$ is the definition of Demuth randomness}
	  \label{Fig:DemuthDef}
\end{figure}

Fig.\ \ref{Fig:DemuthDef} shows   the definition of Demuth randomness as it appears in the 1982  paper  \cite[p.\ 458]{Demuth:82a}.  For a given index $q$ of a binary computable function $\phi_q(k,x)$, Demuth defines the set
   
\bc 
$\Upsilon_q=\{Z:(\fa m)(\ex k \ge m)$   $Z\in\Opcl{W_{\lim (s^1_1(q,k))}}\}$,
\ec 
provided that $\lim (s^1_1(q,k))$ (which simply means $\lim_x \phi_q(k,x)$,  the final version $r$ of the test) exists.  A further condition $\mathcal K(p,q)$, involving an index $p$ for a computable unary function, yields the bound $\phi_p(k)$ on the number of changes.  The bound $\tp{-k}$ on measures of the $k$-th component can   be found in part a) of Fig.\ \ref{Fig:DemuthDef}.  The notation \emph{Mis}$(s^1_1(q,k))$ in Fig.\ \ref{Fig:DemuthDef} refers to the number of ``mistakes'', i.e.\ changes, and Demuth requires it be bounded by $\la p \ra(k)$, meaning $\phi_p(k)$.

If we apply the usual passing  condition for tests, we obtain the following notion which only occurs in \cite[p. 458]{Demuth:82a}.

\begin{definition}\label{def:wdr} We say that a set  $Z\sub \NN$ is \emph{weakly Demuth random} if for each Demuth test $(S_m)\sN{m}$  there is an $m$ such that $Z \not \in S_m$.
\end{definition}
%
%

In \cite {Demuth:82a} weak Demuth randomness is defined in terms of a set $\Upsilon_q^*$, where the quantifiers are switched compared to the definition of  $\Upsilon_q$: 
\bc  
$\Upsilon^*_q=\{Z:(\ex m)  (\fa k \ge m)$   $Z\in\Opcl{W_{\lim (s^1_1(q,k))}}\},$ 
\ec
again provided that $\lim (s^1_1(q,k))$ exists.

Note, however, that there is a slight difference between the definition of weak Demuth randomness as given in \cite{Demuth:82a} and that given by Definition \ref{def:wdr} above.  
If we set $S_k=\Opcl{W_{\lim (s^1_1(q,k))}}$, then $Z\notin\Upsilon_q^*$ means $Z\notin S_m$ for infinitely many $m$.   However, the two definitions are equivalent, since for a given Demuth test  $(S_m)\sN{m}$, for each $m\in\NN$, $(S_k)_{k\geq m}$ also yields a Demuth test.

The class of arithmetical non-Demuth randoms is denoted $\+ A_\alpha$, and the class of arithmetical non-weakly Demuth randoms is denoted $\+ A_\alpha^*$.  The complement of $\+ A_\alpha$ within the arithmetical reals is  denoted  $\+ A_\beta$  
and, similarly, the complement of $\+ A_\alpha^*$   within the arithmetical reals is  called  $\+ A_\beta^*$.
Later on, in the preprint survey, Demuth   used the terms WAP sets (weakly approximable in measure) for the non-Demuth randoms, and NWAP for the Demuth randoms 
and  the terms WAP$^*$ sets and NWAP$^*$ sets for the non-weakly Demuth randoms and the weakly Demuth randoms, respectively.

In the preprint survey~\cite[p. 7, Thm 5, item 5)]{Demuth:preprint88}, 	Demuth  states that Demuth randomness is sufficient to guarantee that the Denjoy alternative for Markov computable functions holds (referring to \cite[Theorem 2]{Demuth:83}).

	\begin{theorem} \label{thm:DA_for_Markov_computable}  Let $z$ be a Demuth random real. Then  the Denjoy alternative holds at $z$ for every Markov computable function. \end{theorem}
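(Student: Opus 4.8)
The plan is to show that for every Markov computable function $f$ the set $F(f)\subseteq[0,1]$ of reals at which the Denjoy alternative for $f$ fails is captured by a countable family of Demuth tests; since a Demuth random real passes every Demuth test, this gives the theorem.

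\emph{Reduction.} Suppose the Denjoy alternative fails for $f$ at $z$: then $f'(z)$ does not exist, and we are not in the case $\widetilde Df(z)=\infty$, $\utilde Df(z)=-\infty$, so $\widetilde Df(z)<\infty$ or $\utilde Df(z)>-\infty$. If $\widetilde Df(z)<\infty$, then $\utilde Df(z)<\widetilde Df(z)$, and $z$ lies in
\[
F^{N}_{p,q}(f):=\{x\in[0,1]:\utilde Df(x)<p<q<\widetilde Df(x)\le N\}
\]
for suitable rationals $p<q$ and a suitable $N\in\NN$; the case $\utilde Df(z)>-\infty$ is the same statement applied to $-f$. (If $z$ is Demuth random, then, being computably random and hence Denjoy random by Theorem~\ref{thm:DenjoyCR}, it automatically satisfies $\utilde Df(z)\neq+\infty$ and $\widetilde Df(z)\neq-\infty$, which streamlines the case analysis but is not essential.) As there are only countably many triples $(p,q,N)$, it suffices to cover each $F^{N}_{p,q}(f)$ by a single Demuth test.

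\emph{Truncation to the bounded-variation case.} Fix $p<q<N$. Since the upper pseudo-derivative is at most $N$ on $F^N_{p,q}(f)$, truncating $f$ from above at a generic rational level exceeding $N$ via Lemma~\ref{truncation} is harmless at small scales near points of $F^N_{p,q}(f)$: it produces a c.e.\ set $C$ of rational intervals with $\mathcal H(C)$ such that the Markov computable function $[f,C]$ has all chord-slopes below that level, and is therefore of bounded variation (with a variation bound computable from the data). Now $[f,C]$ is a Markov computable function of bounded variation, so by Theorem~\ref{thm:Demuth-BV}(i), in the form valid for all Martin-L\"of random reals (see \cite{Brattka.Miller.ea:nd}), it is pseudo-differentiable at every Martin-L\"of random real. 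Using $\mathcal H(C)$ to transfer small-scale behavior, a Martin-L\"of random $z\in F^N_{p,q}(f)$ lying outside $\bigcup C$ (and not accumulated on by the finitely many intervals of $C$ exceeding a suitable size threshold) would inherit from $[f,C]$ the equality $\utilde Df(z)=\widetilde Df(z)$, contradicting membership in $F^N_{p,q}(f)$. Hence $F^N_{p,q}(f)$ meets no Martin-L\"of random real outside $\bigcup C$; since $\bigcup C$ shrinks as the truncation level increases, one obtains a decreasing sequence of effectively open sets which, together with a universal Martin-L\"of test, covers $F^N_{p,q}(f)$.

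\emph{The main obstacle.} The crux — and the reason the hypothesis is Demuth randomness and not merely Martin-L\"of randomness — is to arrange this covering sequence so that its $m$-th component has the form $\Opcl{W_{g(m)}}$ for an $\omega$-c.e.\ function $g$ (equivalently $g\lwtt\Halt$), i.e.\ so that each component is subject to only a computably bounded number of revisions, and so that the components genuinely have measure at most $2^{-m}$. For a Markov computable function that need not be of bounded variation one cannot, unlike in the situation of Theorem~\ref{thm:Demuth-BV}(i), \emph{compute} truncation levels forcing the covers to be small; controlling these measures requires the oracle $\Halt$, and the delicate point is that the $\mathcal H$-conditions on the interval sets force the number of $\Halt$-driven revisions to be computably bounded, which is precisely what turns a test that is Martin-L\"of relative to $\Halt$ into a genuine Demuth test. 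Demuth carries out these measure and revision estimates directly in \cite[Theorem~2]{Demuth:83}; a transparent classical rendering of that argument would be valuable.
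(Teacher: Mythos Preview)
Your proposal is an outline, not a proof: you explicitly defer the crucial step --- showing that the covers of $F^N_{p,q}(f)$ can be arranged as components of a genuine Demuth test with a computably bounded number of revisions --- to Demuth's original paper. The assertion that ``the $\mathcal H$-conditions on the interval sets force the number of $\Halt$-driven revisions to be computably bounded'' is the heart of the matter and is left entirely unsubstantiated; $\mathcal H(C)$ merely says the intervals are non-overlapping and that one can effectively bound the stage after which only small intervals appear, and it is not at all clear why this should translate into an $\omega$-c.e.\ bound on the truncation levels needed to push $\leb(\bigcup C)$ below $2^{-m}$. Without this, what you have is at best a Martin-L\"of test relative to $\Halt$, which is strictly too coarse.

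Structurally, your approach also differs from what the paper reports as Demuth's. You propose a separate Demuth test for each triple $(f,p,q,N)$, built from truncation and the bounded-variation differentiability theorem. Demuth instead constructs a \emph{single} Demuth test $(\mathcal S_n)_{n\in\NN}$, containing all non-\ML-random reals, and runs a case analysis on the \emph{image} point $y=\lim_{r\to x}f(r)=R[f](x)$: either both pseudo-derivatives are infinite (case (1)); or the limit $y$ exists and passes $(\mathcal S_n)$, in which case $f'(x)$ exists and is nonzero; or $y$ exists but fails $(\mathcal S_n)$, which for Demuth random $x$ forces $f'(x)=0$; or the limit $y$ does not exist, which forces $x$ to be left- or right-c.e.\ and hence not Demuth random. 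The leverage comes from tracking the randomness status of the value $R[f](x)$, not from parametrised Vitali-style covers of the domain. Your route, if the gap could be closed, would be an interesting alternative (and indeed later work shows difference randomness already suffices), but as written it does not establish the theorem.
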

	
	To derive this result, Demuth constructs a single Demuth test $(\+ S_n)_{n\in\NN}$ containing all non-Martin-L\"of random reals such that for any Markov computable function and any real $x$ one of the following holds:\begin{itemize}
  \item[(1)] $\widetilde Df(x) = +\infty $ and $\utilde Df(x) = -\infty$;
	\item[(2)] either $\utilde Df(x) > -\infty$  or  $\widetilde Df(x) < +\infty $, and one of the following holds:
	  \begin{itemize}
      \item[(i)] $\widetilde Df(x) = \utilde Df(x)$, $\lim_{r\rightarrow x} f(r)=y$ exists, $y\notin \+S_n$ for almost every $n$, and $\widetilde Df(x) \ne 0$;
      \item[(ii)] $\lim_{r\rightarrow x} f(r)=y$ exists but $y\in\+S_n$ for infinitely many $n$, i.e., $y$ does not pass the test $(\+ S_n)_{n\in\NN}$; 
      \item[(iii)] $\lim_{r\rightarrow x} f(r)$ does not exist.
    \end{itemize}

\end{itemize}

\n   If $x$ is Demuth random and $f$ is a Markov computable function it is possible to show that
\begin{itemize}
  \item condition (2)(iii) cannot hold for $x$. More precisely, as claimed in \cite[Remark 7]{Demuth:83}, condition (2)(iii) implies that $x$ is either a left-c.e.\ or a right-c.e.\ real (where a real is right-c.e.\ if it is the limit of a computable non-increasing sequence of rationals), which cannot be Demuth random;
	\item condition (2)(ii)	reduces to the situation where $f$ is differentiable at $x$ with the value $f'(x)$ equal to $0$;
	\item condition (2)(i) reduces to the situation where the value  $\widetilde Df(x) = \utilde Df(x)$ is finite and $f'(x) \neq 0$.
\end{itemize}
Thus, the Denjoy alternative for $f$ holds at any Demuth random real $x$.
%
%



%

\begin{remark} \label{rem:FrankNg} {\rm Franklin and  Ng~\cite{Franklin.Ng:10}  introduced difference randomness, a concept     much weaker than even weak Demuth randomness,  but still stronger than   \ML\ randomness.  Bienvenu, H\"olzl, Miller and Nies \cite[Thm.\ 1]{Bienvenu.Hoelzl.ea:12} have shown that difference randomness is sufficient as a hypothesis on the real $z$ in Theorem~\ref{thm:DA_for_Markov_computable}.  No converse holds. They also show  that the ``randomness notion'' to make the Denjoy alternative hold for each Markov computable function is incomparable with Martin-L\"of randomness! } \end{remark}

\section{Further results on Demuth  randomness}\label{sec-dem-rand}

The notions of Demuth and weak Demuth randomness have proven to be very fruitful, being studied in a number of recent papers.  However, due to the relative inaccessibility of Demuth's work, many researchers in the field have been unaware of just how much Demuth proved about these notions.  In this section, we review some of Demuth's results on his notions of randomness.

\subsection{Computability-theoretic properties of Demuth randomness}

In the mid-1970s, the mathematics department at Charles  University held a seminar on computability theory based on Rogers' book \cite{Rogers:67}, which had been translated into Russian in 1972.  As a result of this seminar,  Demuth became more interested in computability theory and  the computational complexity of random reals.  

In particular, Demuth thoroughly studied the relationship between Demuth randomness and the Turing degrees.  For instance, in \cite{Demuth:88} he proved the following, which was already implicit in \cite[Theorem 6]{Demuth:82}.

\begin{proposition}
\begin{itemize}
\item[(i)] Every Demuth random real is generalized low, i.e.,  $z'\le_T z\oplus\ES'$.  
\item[(ii)] There is a single Demuth test $(S_m)_{m\in\NN}$ such that for every $z$ for which $z\in S_m$ for at most finitely many $m$, $z$ is generalized low.
\end{itemize}
\end{proposition}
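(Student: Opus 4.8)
The plan is to establish (ii), from which (i) follows immediately, since every Demuth random real passes every Demuth test and in particular the one we construct. Identify a real with its binary expansion in $\cantor$. For each $e$ let $U_e=\{X\in\cantor:\Phi^X_e(e)\DA\}$, the $e$-th component of the Turing jump; this is a $\SI 1$ open set uniformly in $e$, presented by a canonical computable increasing sequence $U_e[t]$ of clopen sets with union $U_e$. The real $\lambda(U_e)$ is left-c.e., hence only $\Delta^0_2$ rather than computable, so the rate at which $\lambda(U_e[t])$ converges has no computable bound — this is the one genuine difficulty.

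The $m$-th component of the test should isolate, for each index $e\le m$, the small set of reals on which the obvious $(z\oplus\Halt)$-procedure for deciding ``$e\in z'$'' could fail. To control mind changes I would use a \emph{lazy} approximation $\tilde s(e,k,t)$ to a stage by which $\lambda(U_e)$ has converged to within $\tp{-k}$: set $\tilde s(e,k,0)=0$, and at stage $t$ reset $\tilde s$ to $t$ exactly when $\lambda(U_e[t])-\lambda(U_e[\tilde s(e,k,t-1)])>\tp{-(k+1)}$. Each reset debits more than $\tp{-(k+1)}$ of measure, so there are at most $\tp{k+1}$ resets; let $\tilde s_{e,k}=\lim_t\tilde s(e,k,t)$, so that $\lambda(U_e\setminus U_e[\tilde s_{e,k}])\le\tp{-(k+1)}$. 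Now define $S_m=\bigcup_{e\le m}\bigl(U_e\setminus U_e[\tilde s_{e,\,m+e+1}]\bigr)$, whose measure is below $\tp{-m}$.

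Next I would verify that $(S_m)\sN m$ is a Demuth test. Once the finitely many values $\tilde s_{e,\,m+e+1}$ for $e\le m$ are fixed, $S_m$ is a c.e.\ open set with a computable index; along the computable approximation that substitutes $\tilde s(e,m+e+1,t)$ for $\tilde s_{e,m+e+1}$, this index changes at most $\sum_{e\le m}\tp{m+e+2}$ times, a computable function of $m$. Hence the index function is $\omega$-c.e., i.e.\ $f\lwtt\Halt$, as required. Finally, for the payoff: if $z$ passes the test, say $z\notin S_m$ for all $m\ge m_0$, then given $e$ set $m=\max(m_0,e)$, compute $\tilde s_{e,\,m+e+1}$ from $\Halt$, and test the clopen condition $z\in U_e[\tilde s_{e,\,m+e+1}]$, which depends on only finitely much of $z$: if it holds then $e\in z'$, and if it fails then $z\notin U_e$ (otherwise $z\in U_e\setminus U_e[\tilde s_{e,\,m+e+1}]\subseteq S_m$), so $e\notin z'$. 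This computes $z'$ from $z\oplus\Halt$ using only the finite parameter $m_0$, giving $z'\leT z\oplus\Halt$.

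I expect the hard part to be exactly the mind-change accounting of the third step. A naive choice — taking the \emph{least} convergence stage, or ranging over all $e$ rather than just $e\le m$ — makes the index function fail to be $\omega$-c.e., so the whole argument turns on the lazy, measure-debited approximation together with the truncation to indices $e\le m$. The measure estimates and the closing Turing reduction are then routine.
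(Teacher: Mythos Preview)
The paper is a survey and does not include a proof of this proposition; it merely attributes the result to Demuth and, elsewhere, points the reader to \cite[Theorem 3.6.26]{Nies:book} for a proof. Your argument is correct and is essentially the standard one found in that reference: approximate the measure of each jump component $U_e$ lazily so that the number of mind changes is computably bounded, package the residual sets $U_e\setminus U_e[\tilde s_{e,k}]$ into a Demuth test, and then read off $e\in z'$ from the clopen condition $z\in U_e[\tilde s_{e,k}]$ once $z$ passes the test. The truncation to $e\le m$ and the measure-debited reset count are exactly the points that make this work, and you have identified them correctly.
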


Demuth actually proved a stronger result.  Recall that a truth-table reduction ($\mathrm{tt}$-reduction for short) is a Turing reduction given in terms of a computable sequence of truth-tables that determine the outputs of the reduction.  Equivalently, a $\mathrm{tt}$-reduction is a Turing reduction $\Phi$ such that $\Phi^X$ is total for all oracles $X$  (Nerode \cite{Nerode:57a}).  A $\ES'$-$\mathrm{tt}$-reduction is thus a reduction given in terms of a $\ES'$-computable sequence of truth-tables.  Demuth proved that for any Demuth random $z$, $z'$ is $\ES'$-$\mathrm{tt}$-reducible to $z$. Note that Demuth's result does not imply that $z'\equiv_{\mathrm{tt}}z\oplus\ES'$, since this latter statement implies that the use of $\ES'$ in the reduction is bounded by a computable function, which need not be the case for a $\ES'$-$\mathrm{tt}$-reduction.

Demuth also proved results about the growth rate of functions computable from Demuth random reals.  First, he showed that every Demuth random real has hyperimmune degree (i.e.\ that every Demuth random computes a function not dominated by any computable function).  In contrast, he also proved the following.

\begin{theorem}[Demuth \cite{Demuth:88a}]
There is a $\emptyset'$-computable function $g$ such that for every Demuth random $z$ and every  $z$-partial computable function $f$, $f(n)\leq g(n)$ for almost every $n$.
\end{theorem}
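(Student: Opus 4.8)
The plan is to prove the apparently stronger statement that for each index $e$ one can build, uniformly, a Demuth test $(S^e_m)_{m\in\NN}$ together with a function $g_e\leT\Halt$ such that every real $z$ passing $(S^e_m)_m$ satisfies $\Phi^z_e(n)\leq g_e(n)$ for all but finitely many $n\in\dom(\Phi^z_e)$. Since a Demuth random real passes every Demuth test, the $\Halt$-computable function $g(n):=\max\{g_e(n)\colon e\leq n\}$ then works: given a Demuth random $z$ and a $z$-partial computable $f=\Phi^z_e$, for almost every $n$ we have $n\geq e$, and if $\Phi^z_e(n)\DA$ then $\Phi^z_e(n)\leq g_e(n)\leq g(n)$. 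So I may fix one functional $\Phi=\Phi_e$.

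Fix $m$ and write $V_{m,v}=\{z\colon \Phi^z(m)\DA\lland\Phi^z(m)\geq v\}$, a c.e.\ open set uniformly in $m,v$. The natural first attempt is to let $g(m)$ be one less than the least $v$ with $\lambda(V_{m,v})\leq\tp{-m}$ and put $S_m=V_{m,g(m)+1}$: then $\lambda(S_m)\leq\tp{-m}$ and $z\notin S_m$ says exactly $\Phi^z(m)\leq g(m)$. This threshold is $\Halt$-computable, but its obvious $\DII$ approximation may change far too often to yield an $\omega$-c.e.\ index function, so the attempt must be refined by a \emph{reset discipline}. Run the enumeration of $\Phi$; at stage $s$ maintain a guess $\widehat q_s(m)$, with $\widehat q_0(m)=0$, and enumerate into the current version of $S_m$ all strings $\sigma$ with $\Phi^\sigma_s(m)\DA\geq\widehat q_s(m)$. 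If the measure enumerated so far ever exceeds $\tp{-m}$, perform a \emph{reset}: abandon the current c.e.\ set (passing to a fresh index) and set the new guess to $1+(\text{the largest value of }\Phi^\sigma(m)\text{ yet seen})$. Let $\widehat q_\infty(m)$ be the limiting guess and set $g(m):=\widehat q_\infty(m)-1$.

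The heart of the matter is the claim that component $m$ resets fewer than $\tp{m}$ times. Let $\widehat q^{(j)}(m)$ be the guess after $j$ resets and $M_j=\lambda(V_{m,\widehat q^{(j)}(m)})$. The $j$th reset occurs only after enumerating more than $\tp{-m}$ of measure, and every enumerated string has value in $[\widehat q^{(j)}(m),v_j]$, where $v_j+1=\widehat q^{(j+1)}(m)$; hence $M_j=\lambda\{z\colon\widehat q^{(j)}(m)\leq\Phi^z(m)\leq v_j\}+M_{j+1}>\tp{-m}+M_{j+1}$, so $M_{j+1}<M_j-\tp{-m}$. As $M_0\leq 1$, there are fewer than $\tp{m}$ resets and $\widehat q_\infty(m)$ is attained. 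Therefore the index of $S_m$ changes a computably bounded number of times, so the index function is $\omega$-c.e.\ (equivalently $\lwtt\Halt$) and $(S_m)_m$ is a genuine Demuth test; and since resets stop, $\widehat q_\infty(m)$, hence $g=g_e$, is $\Halt$-computable (in fact $\omega$-c.e., which one could keep track of if a sharper conclusion were wanted). When no further reset happens, $\lambda(V_{m,\widehat q_\infty(m)})\leq\tp{-m}$, so $\lambda(S_m)\leq\tp{-m}$ as required, while $z\notin S_m$ means $\Phi^z(m)<\widehat q_\infty(m)=g(m)+1$. Thus any $z$ passing the test has $\Phi^z(m)\leq g(m)$ for almost every $m$, as needed.

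The main obstacle is precisely this step: producing a $\Halt$-computable bound is trivial on its own, and building an $\Halt$-open test that catches the fast-growing reals is trivial on its own, but marrying the two within the rigid format of a Demuth test (only computably-boundedly many revisions of each component) is not, and the reset-to-the-largest-value-seen device, validated by the inequality $M_{j+1}<M_j-\tp{-m}$, is what keeps the number of revisions computably bounded. Everything else — the uniformity in $e$, the treatment of partial $\Phi^z_e$, and the final $\max$ over $e\leq n$ — is routine.
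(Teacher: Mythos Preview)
Your argument is correct. The reset discipline is exactly the right idea: the key inequality $M_{j+1}<M_j-2^{-m}$ follows because all strings enumerated into version $j$ witness $\Phi^z(m)\in[\widehat q^{(j)}(m),v_j]$, a set disjoint from $V_{m,\widehat q^{(j+1)}(m)}$, so the measure really does drop by more than $2^{-m}$ at each reset. This bounds the number of index changes by $2^m$, giving an $\omega$-c.e.\ index function and hence a genuine Demuth test in the paper's sense (note the paper's Definition~\ref{df:DemDef} only constrains the measure of the \emph{final} component $S_m=[W_{f(m)}]^\prec$, so the fact that abandoned versions momentarily exceed $2^{-m}$ is harmless). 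The uniform aggregation $g(n)=\max_{e\le n}g_e(n)$ is routine.

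As for comparison: the paper is a survey and does not supply a proof of this theorem. It simply attributes the result to Demuth's paper \cite{Demuth:88a} and situates it relative to Kurtz's earlier theorem that $\emptyset'$ is uniformly almost-everywhere dominating, pointing out that Demuth's version strengthens Kurtz in two ways (the dominated class contains all Demuth randoms, and domination is of \emph{partial} functions). So there is nothing in the paper to compare your argument against; your proof would serve perfectly well as the missing justification.
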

In modern terminology, this result implies that $\emptyset'$ is \emph{uniformly almost everywhere dominating}, a result  established earlier by Kurtz in \cite{Kurtz:81}.  What Kurtz showed is that there is a measure one set of reals $\+ S$ such that every total function computable from a member of $\+ S$ is dominated by a fixed $\ES'$-computable function.  Demuth was unaware of this result, but improved it in two ways, (1) by showing that $\+S$ includes every Demuth random real, and (2) by showing the function $g$ dominates every \emph{partial} function computable from every Demuth random.

  Demuth proved a further result of which   a variant of which was only recently rediscovered.
\begin{theorem}[Demuth \cite{Demuth:88}]\label{thm-DemR-MLR}
 Let $y$ be  Demuth random and $x$   Martin-L\"of random. If  $x\leq_Ty$ then  $x$ is Demuth random.
 \end{theorem}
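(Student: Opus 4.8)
The plan is to prove the contrapositive for the Demuth randomness of $x$: assuming $x$ is not Demuth random, I will produce a Demuth test that $y$ fails, contradicting the hypothesis on $y$. Fix a Turing functional $\Phi$ with $\Phi^y=x$ and a Demuth test $(S_m)\sN m$, say $S_m=\Opcl{W_{f(m)}}$ with $f$ $\omega$-c.e.\ via a computable approximation $g(m,\cdot)$ having computable mind-change bound $b$, such that $x\in S_m$ for all $m$ in some infinite set $\mathcal G$. As usual we may first throttle the test so that every approximation $\Opcl{W_{g(m,s)}}$ already has measure $\le 2^{-m}$; this changes neither that $(S_m)$ is a Demuth test nor the set $\mathcal G$, since in the limit nothing is dropped.

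The first ingredient is that $\Phi$ cannot blow up measure near $x$, because $x$ is \ML\ random. Let $\mu$ be the push-forward sub-measure, $\mu[\sigma)=\lambda\{Z:\Phi^Z\succeq\sigma\}$, which is lower semicomputable uniformly in $\sigma$ and has total mass $\le 1$. For each $c$ set $\mathcal V_c=\bigcup\{[\sigma):\mu[\sigma)>2^{-|\sigma|+c}\}$; this is a uniformly $\SI 1$ sequence, and because the prefix-minimal such $\sigma$ are pairwise incomparable one gets $\lambda(\mathcal V_c)<2^{-c}$. Thus $(\mathcal V_c)_c$ is a Martin-L\"of test, so $x\notin\mathcal V_{c_0}$ for some fixed $c_0$; equivalently $\mu[x\uh n)\le 2^{-n+c_0}$ for every $n$. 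In particular $\lambda(\Phi^{-1}[\rho))\le 2^{-|\rho|+c_0}$ whenever $\rho\prec x$, where $\Phi^{-1}[\rho):=\{Z:\Phi^Z\succeq\rho\}$.

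Now I build the test for $y$. Fixing a pairing $\la\cdot,\cdot\ra$, for $N=\la k,c\ra$ let the finite window be $I_N=[\,\la k,c\ra+c+1,\ \la k{+}1,c\ra+c\,]$; for each fixed $c$ these windows are consecutive and exhaust a tail of $\NN$. Put $T_N=\bigcup_{m\in I_N}\bigcup_{\rho\in W_{f(m)}}R_{N,\rho}$, where $R_{N,\rho}$ is $\Phi^{-1}[\rho)$ enumerated only until its measure reaches $2^{-|\rho|+c}$. Since the approximations are throttled, $\lambda(T_N)\le\sum_{m\in I_N}2^c\lambda(S_m)\le 2^c\sum_{m\ge\la k,c\ra+c+1}2^{-m}=2^{-N}$ at every stage. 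Because $I_N$ is a finite, computably given set of indices, rebuilding $T_N$ from scratch each time some $f(m)$ with $m\in I_N$ changes value costs at most $\sum_{m\in I_N}b(m)$ restarts, a quantity computable in $N$; hence $N\mapsto$ (index of $T_N$) is $\omega$-c.e., so $(T_N)$ is a genuine Demuth test. I expect this to be the crux of the argument: the naive move of letting $T_N$ collect the preimages of all $S_m$ with $m$ past some threshold fails, because the infinitely many approximations $g(m,\cdot)$ together would force unboundedly many restarts; the remedy is exactly to confine each $T_N$ to a finite window, pay the (unknown) blow-up constant by shifting the window to start near level $N+c$, and exploit that for a fixed shift $c$ the windows tile a tail of $\NN$.

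Finally, the verification that $y$ fails $(T_N)$. Take $c=c_0$. All but finitely many $m\in\mathcal G$ lie in some window $I_{\la k,c_0\ra}$, and since the left endpoints tend to infinity these $m$ are spread over infinitely many distinct values of $k$. Fix such an $m$ and choose $\rho\in W_{f(m)}$ with $\rho\prec x$ (possible since $x\in S_m$); then $|\rho|\ge m$, because $2^{-|\rho|}=\lambda[\rho)\le\lambda(S_m)\le 2^{-m}$, so by the first step $\lambda(\Phi^{-1}[\rho))\le 2^{-|\rho|+c_0}$, i.e.\ the cap defining $R_{\la k,c_0\ra,\rho}$ never bites and $R_{\la k,c_0\ra,\rho}=\Phi^{-1}[\rho)$. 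Since $\Phi^y=x\succeq\rho$, we get $y\in\Phi^{-1}[\rho)\subseteq T_{\la k,c_0\ra}$. Hence $y\in T_N$ for infinitely many $N$, so $y$ is not Demuth random, contradicting the hypothesis. Therefore $x$ is Demuth random.
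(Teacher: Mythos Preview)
Your proposal is correct and takes essentially the same approach the paper sketches: both hinge on the Miller--Yu / Demuth--Ku\v{c}era lemma that for a Turing functional $\Phi$ and ML-random $A$ there is a constant $c$ with $\lambda(\Phi^{-1}[A\uhr n))\le 2^{-n+c}$, after which one pulls the Demuth test on $x$ back through $\Phi$ to obtain a Demuth test that $y$ fails. The paper merely asserts that ``this method works'' for Demuth randomness without details; your finite-window construction indexed by pairs $\langle k,c\rangle$ is a clean way to supply those details, in particular securing the $\omega$-c.e.\ mind-change bound that a naive pullback would violate.
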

 
Miller and Yu \cite{Miller.Yu:ta1} proved that for every 2-random $y$ (i.e.\ $y$ is Martin-L\"of random relative to $\ES'$) and Martin-L\"of random $x$, $x\leq_Ty$ implies that $x$ is 2-random (see also \cite[Theorem 8.5.3]{Downey.Hirschfeldt:book} or \cite[Corollary 3.6.20]{Nies:book}). This follows from their more general result that for any $z$, every Martin-L\"of random Turing below a $z$-Martin-L\"of random is also $z$-Martin-L\"of random.

Demuth's proof is very similar to the proof of the result of Miller and Yu given in \cite{Miller.Yu:ta1}. For a Turing functional $\Phi$ and  $n>0$, consider the open set  	\bc $S^A_{\Phi,n} =
      \Opcl{\{\sss\in \{0,1\}^*\colon \,  A\uhr n \preceq  \Phi^\sss\}}$. \ec
Miller and Yu proved that if $ A$ is \ML\ random then there is a constant $c$ such that $\fao n \leb (S^A_{\Phi,n}) \le \tp{-n+c}$ (see \cite[Lemma 10.3.7]{Downey.Hirschfeldt:book} or \cite[Theorem 5.1.14]{Nies:book}).  This method works for most test notions of randomness stronger than Martin-L\"of randomness.  An equivalent result (given in slightly different terminology) was obtained by 
Demuth and  \Kuc{} \cite[Theorem 18]{Demuth.Kucera:87}, which Demuth used in his proof of Theorem \ref{thm-DemR-MLR}.

Demuth also proved a version of the jump inversion theorem for Demuth random reals.

\begin{theorem}[Demuth Jump Inversion, \cite{Demuth:88}]\label{thm-demuth-jump-inversion}
For every $z\geq_T\ES'$, there is a Demuth random real $x$ such that $x'\equiv_Tz$.
\end{theorem}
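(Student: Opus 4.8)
\emph{The reduction.} The plan is to reduce, via the generalized lowness of Demuth random reals established in the Proposition above, to a coding construction carried out with $z$ as an oracle. Suppose we can build a Demuth random real $x$ with $x \leq_T z$ and $z \leq_T x'$. Since $x$ is Demuth random it is generalized low, so $x' \leq_T x \oplus \ES'$; as $x \leq_T z$ and $\ES' \leq_T z$, this gives $x' \leq_T z$, which together with $z \leq_T x'$ yields $x' \equiv_T z$, as required. (When $z \equiv_T \ES'$ the coding clause is automatic and one is merely building a $\Delta^0_2$ --- hence low --- Demuth random real; the substance of the theorem is the case $z >_T \ES'$, where $x$ must be non-$\Delta^0_2$.) Observe that one cannot simultaneously have $z \leq_T x$: that would force $x \equiv_T z \geq_T \ES'$ and hence $x' \leq_T x$ by generalized lowness, which is impossible. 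Thus the coding of $z$ must be recoverable from the jump of $x$ but never from $x$ itself.

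\emph{The construction.} Build $x = \bigcup_s \sss_s$ as the limit of a $z$-computable increasing sequence of strings, so that $x \leq_T z$ is automatic. Fix an effective enumeration of all candidate Demuth tests; we interleave, for each candidate $(S^e_m)_m$, a requirement $\mathrm R_e$ forcing $x$ to pass it (vacuously satisfied when the candidate is not a genuine Demuth test), and for each $n$ a coding requirement $\mathrm C_n$. What makes the $\mathrm R_e$ manageable with oracle $z$ is that a Demuth test has an $\omega$-c.e. --- not merely $\Pi^0_1(\ES')$ --- structure: its component indices form a function $f_e \lwtt \ES'$, so $z$, which computes $\ES'$, knows the eventual open sets $S^e_m = \Opcl{W_{f_e(m)}}$. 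To act for $\mathrm R_e$ from a current cylinder $[\sss]$ one picks $m_0$ with $\leb\bigl(\bigcup_{m \geq m_0} S^e_m\bigr) \leq \tp{-m_0+1}$ small relative to $\leb([\sss]) = \tp{-\sssl}$ and thereafter extends $\sss$ only so that the limit $x$ avoids $\bigcup_{m \geq m_0} S^e_m$; any such $x$ passes the test, so a successful construction produces a real passing every genuine Demuth test, i.e.\ a Demuth random real. For $\mathrm C_n$ one uses the slowed-down coding familiar from jump-inversion arguments, which encodes $z$ as a $\Pi^0_1(x)$ set --- so that $z \leq_T x'$ --- in a manner that is crucially \emph{not} decidable from $x$ alone; the degrees of freedom not consumed by the coding are used to step around the small open sets of the $\mathrm R_e$.

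\emph{The main obstacle.} Two tensions must be resolved, and doing so is the technical heart of the argument. First, the diagonalization against the Demuth tests must be genuinely $z$-\emph{computable}: at a finite stage only a clopen approximation to $\bigcup_{m\geq m_0} S^e_m$ is available, and whether a given cylinder is disjoint from the full open set is a $\Pi^0_1(\ES')$ question beyond $z$, so $\mathrm R_e$ has to be processed incrementally --- enumerating the open set and re-extending $\sss$ to dodge whatever clopen piece presently threatens it. Second, the coding of $z$ may not simply pin bits of $x$ to values of $z$ along a fixed coordinate pattern, because a small open set can cover an entire coordinate plane and would then defeat the $\mathrm R_e$; the coding actions must instead be placed at positions selected $z$-effectively from the history of the construction. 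Orchestrating all the re-extensions (for the $\mathrm R_e$ and for the coding) by a finite-injury priority arrangement so that $\sss_s$ still converges while the positive-measure invariant is maintained throughout is the delicate step. Granting it, the verifications that $x$ is Demuth random, that $x \leq_T z$, and that $z \leq_T x'$ are routine, and the reduction above completes the proof.
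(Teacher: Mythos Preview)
Your reduction via generalized lowness is exactly the paper's: once you have a Demuth random $x$ with $x \leq_T z$ and $z$ recoverable from $x$ together with $\ES'$, GL1 closes the loop to $x' \equiv_T z$.

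Where you diverge is in the construction, and the paper's route is substantially simpler. The paper does not enumerate candidate Demuth tests and run a priority argument against them. Instead it invokes two facts already established earlier in the paper: (a) the non-Demuth-random reals are covered by a \emph{single} Schnorr test relative to $\ES'$, hence form a set $\mathcal E$ of $\ES'$-measure zero; and (b) a general \Kuc-style coding lemma (stated just after the theorem as Theorem~\ref{thm-pre-jump-inversion}): for any $y,z$ and any $\mathcal E$ of $y$-measure zero there is $x \notin \mathcal E$ with $x \leq_T y \oplus z$ and $z \leq_T x \oplus y$. Taking $y = \ES'$ yields a Demuth random $x$ with $x \leq_T z$ and $z \leq_T x \oplus \ES'$, and the reduction finishes as you say.

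This packaging dissolves both obstacles you flag. Because all non-Demuth-randoms sit inside one $\ES'$-Schnorr null set, one is doing straight \Kuc\ coding through a \emph{single} $\Pi^0_1(\ES')$ class of $\ES'$-computable positive measure --- no list of $\mathrm R_e$, no injuries, and no need to decide the $\Pi^0_1(\ES')$ disjointness question you worry about, since with oracle $z \geq_T \ES'$ one simply computes the tree level by level and extends within it. And because the decoding target is $z \leq_T x \oplus \ES'$ rather than $z \leq_T x'$, the coding is the ordinary left/right coding along splittings of that class; decoding uses $\ES'$ to recover the tree, not the jump of $x$, so the tension you identify between the $\mathrm C_n$ and the $\mathrm R_e$ never materializes. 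Your direct construction is plausible and could likely be pushed through, but the finite-injury apparatus is an artifact of treating the tests one at a time; the paper's two-line derivation from the coding lemma avoids it entirely.
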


An immediate corollary of Theorem \ref{thm-demuth-jump-inversion} is that there exists a $\Delta^0_2$ Demuth random real
\cite[Theorem 12]{Demuth:88}. For a direct proof of this corollary, see \cite[Theorem 7.6.3]{Downey.Hirschfeldt:book} or \cite[Theorem 3.6.25]{Nies:book}.

To prove Theorem \ref{thm-demuth-jump-inversion}, Demuth appealed to the following result.

\begin{theorem}[Demuth, \cite{Demuth:88}]\label{thm-pre-jump-inversion}
For $y,z\in[0,1]$ and any $\mathcal{E}\subseteq[0,1]$ of $y$-measure zero, there is $x\notin\mathcal{E}$ such that
$x\leq_T y\oplus z$ and
 $z\leq_T x\oplus y$.
\end{theorem}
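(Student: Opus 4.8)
The plan is to build $x$ by a forcing-style construction relative to the oracle $y$, interleaving two kinds of requirements: measure requirements that keep $x$ out of the $y$-null set $\mathcal E$, and coding requirements that make $z$ recoverable from $x\oplus y$ while keeping $x$ itself computable from $y\oplus z$. Since $\mathcal E$ has $y$-measure zero, fix a $y$-computable sequence of open sets $(V_k)_{k\in\NN}$ with $\mathcal E\subseteq\bigcap_k V_k$ and $\leb(V_k)\le 2^{-k}$. I would construct $x$ as the unique point in a nested sequence of closed sets (equivalently, as a path through a $y$-computable pruned tree), where at stage $k$ we ensure the current basic clopen cylinder $[\sigma_k]$ avoids $V_k$; this is always possible because $\leb(V_k)<\leb([\sigma_{k-1}])$ once we have committed only finitely much of $x$ and can subdivide finely enough, and all of this is done uniformly in the oracle $y$.

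The key device for the two reductions is to code the bit $z(n)$ into a \emph{position} rather than a value. Concretely, at the $n$-th coding stage, having committed $x$ up to some string $\sigma$, split the remaining cylinder $[\sigma]$ into finitely many subcylinders, designate two of them (a ``$0$-branch'' and a ``$1$-branch'') that both survive the relevant $V_k$-avoidance, and pass to the subcylinder corresponding to $z(n)$. Then: to compute $x$ from $y\oplus z$, simply run the construction, consulting $z$ at each coding stage to know which branch was taken — this is a genuine (even tt-style, if the branching is computably bounded) reduction. Conversely, to compute $z$ from $x\oplus y$, run the same construction using $y$; at the $n$-th coding stage the finitely many candidate subcylinders are $y$-computable, and exactly one of them contains $x$, which $x$ lets us detect; the index of that subcylinder reveals $z(n)$. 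The point is that the branch structure is entirely determined by $y$, so both players reconstruct the same combinatorial skeleton and differ only in which path they walk. After all coding stages are exhausted (or spread cofinally, alternating with $V_k$-avoidance stages), continue the $V_k$-avoidance to force $x\notin\bigcap_k V_k\supseteq\mathcal E$.

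The main obstacle is the bookkeeping that makes all three demands compatible \emph{simultaneously}: at each stage we must be able to find subcylinders that (a) avoid the next open set $V_k$, (b) leave room for two distinct surviving branches for the next coding bit, and (c) are located so that the eventual point $x$ still lies outside every $V_k$, not merely outside the finitely many handled so far. This is a standard but delicate measure-counting argument — one shows that the portion of $[\sigma]$ killed by $V_k$ has measure $\le 2^{-k}$, so for $k$ large relative to $\len\sigma$ there is always a generous surviving sub-portion, and one schedules the coding stages sparsely enough (say, coding bit $n$ only after we have secured avoidance of $V_n$) that the two concerns never collide. I would also need to check that the construction is \emph{total} — that the tree we build is infinite and pruned along the chosen path — which again follows from the strict inequality $\leb(V_k)<\leb([\sigma])$ whenever $\len\sigma\le k-1$. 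Once these pieces are in place, uniqueness of the point in $\bigcap_k[\sigma_k]$ gives a well-defined $x$, and the two reductions described above finish the proof; Theorem \ref{thm-demuth-jump-inversion} then follows by taking $y$ to be a suitably chosen Demuth random with the right jump and $\mathcal E$ the union of a universal Demuth test together with the $y$-null set of reals failing to have the desired jump.
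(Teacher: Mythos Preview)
The paper does not actually give a proof of Theorem~\ref{thm-pre-jump-inversion}; it only cites Demuth's original paper~\cite{Demuth:88} and then shows how the Demuth Jump Inversion (Theorem~\ref{thm-demuth-jump-inversion}) follows from it. So there is no ``paper's own proof'' to compare against, and your proposal must stand on its own.

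Your overall strategy---a \Kuc-style coding in which $x$ is built as a path through a $y$-computable tree, with bits of $z$ coded by branch choices and the null set avoided by a measure argument---is the natural one and is almost certainly what Demuth did. However, there is a genuine gap in the encoding direction $x \le_T y \oplus z$. You write that at each stage one ``designates two [subcylinders] that both survive the relevant $V_k$-avoidance'' and that to compute $x$ one ``simply runs the construction.'' But for a bare $\Sigma^0_1(y)$ open set $V_k$, the predicate ``$[\tau]\not\subseteq V_k$'' is only $\Pi^0_1(y)$, not $y$-decidable, so $y$ alone cannot pick out the surviving branches; the usual \Kuc\ argument only yields $x \le_T z \oplus y'$. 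What rescues the situation is precisely Demuth's definition of ``$y$-measure zero'': $\mathcal E$ is covered by a $y$-\emph{Schnorr} test, so $\leb(V_k)$ is $y$-computable. From this one checks that $\leb\bigl([\sigma]\setminus V_k\bigr)$ is $y$-computable uniformly in $\sigma$ (each $\leb([\sigma]\cap V_k)$ is $y$-left-c.e., and since finitely many of them sum to the $y$-computable $\leb(V_k)$, each is $y$-computable). This is what lets $y$ effectively locate two sufficiently fat surviving subcylinders at every stage. You never invoke this Schnorr hypothesis, and without it the argument fails: indeed, for a mere $y$-\ML\ test one can arrange that $[0,1]\setminus V_1$ contains no $y$-computable point, which already blocks the theorem when $z$ is computable. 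A second, more minor imprecision: one cannot literally make a cylinder $[\sigma_k]$ \emph{disjoint} from the open set $V_k$ (which may be dense); only a positive-measure remainder is guaranteed, and the construction must be phrased accordingly.

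Finally, your closing remark about deriving Theorem~\ref{thm-demuth-jump-inversion} is off. The paper takes $y=\emptyset'$ (not a Demuth random) and lets $\mathcal E$ be the intersection of a single $\emptyset'$-Schnorr test capturing all non-Demuth-randoms; then $x$ is Demuth random, $x\le_T z$ and $z\le_T x\oplus\emptyset'$, and the conclusion $x'\equiv_T z$ follows because Demuth randoms are generalized low.
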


The Demuth Jump Inversion theorem can be derived  from Theorem \ref{thm-pre-jump-inversion} as follows.  Let $z\geq_T\ES'$ be given, and let $y=\ES'$.
  Demuth proved that there is a single Schnorr test   $(G_m^{\ES'}) _{m \in \NN}$ relative to $\ES'$ that contains every non-Demuth random. We set $\mathcal{E}=\bigcap_{m \in \NN}G_m^{\ES'}$, so that $\mathcal{E}$ has $\ES'$-measure zero.  By Theorem \ref{thm-pre-jump-inversion}, there is some Demuth random $x$ such that $x\leq_T z\oplus\ES'\leq_T z$ and $z\leq_T x\oplus\ES'$.  It follows that $z\equiv_Tx\oplus\ES'$. Then,  since every Demuth random is generalized low, we have $z\equiv_T x'$.

\subsection{Weak Demuth randomness and density}

Another surprising result that Demuth proved involves the relationship between weak Demuth randomness and density in the sense of Lebesgue.  Only recently have researchers in the field recognized the significance of the relationship between randomness and Lebesgue density.  For instance, density considerations were used to solve a long-standing open problem known as the covering problem, originally due to F.\ Stephan, and  posed in print e.g.\ in~\cite{Miller.Nies:06}. This problem  asks whether every $K$-trivial  set is Turing below an incomplete ML-random set.  A survey of the affirmative solution  is given in \cite{Bienvenu.Day.ea:nd}.  Anticipating this connection between  randomness and density, already in  1982,  Demuth~\cite{Demuth:82}  proved a remarkable result. Recall that the lower density of a measurable set $\+ P$ at a real $z$ is 
\[
\rho(\+ P\mid z)=\liminf_{h\rightarrow 0}\{\leb(\mathcal{P} \cap I)/\leb(I):I \text{ is an open interval, }z\in I \;\&\; |I|< h \}.
\]

\begin{definition}
A real $z$ is a \emph{density-one point} if for every effectively closed class $\mathcal{P}$ containing $z$, $\rho(\+P\mid z)=1$.
\end{definition}

\begin{theorem}[Demuth, \cite{Demuth:82}]
Every weakly Demuth random is a density-one point.
\end{theorem}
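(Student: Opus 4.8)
We argue by contraposition: assuming $z$ is \emph{not} a density-one point, we produce a Demuth test $(S_m)_{m\in\NN}$ with $z\in S_m$ for every $m$, which (weak Demuth randomness requiring $z\notin S_m$ for \emph{some} $m$) shows $z$ is not weakly Demuth random. So fix an effectively closed class $\mathcal P\ni z$ and a rational $q$ with $\rho(\mathcal P\mid z)<q<1$. Since $\rho(\mathcal P\mid z)<q$, for every $n$ there is a rational interval $I\ni z$ with $|I|\le 2^{-n}$ and $\lambda(\mathcal P\cap I)<q|I|$ (pass to a slightly larger interval with rational endpoints). Put $\widehat V_n=\bigcup\{I : I\text{ rational}, |I|\le 2^{-n}, \lambda(\mathcal P\cap I)<q|I|\}$. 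As $\lambda(\mathcal P\cap I)=|I|-\lambda(\mathcal P^c\cap I)$ and $\mathcal P^c$ is c.e.\ open, the condition ``$\lambda(\mathcal P\cap I)<q|I|$'' is $\Sigma^0_1$ uniformly in $I$; hence the $\widehat V_n$ form a uniformly c.e.\ open, decreasing sequence, and $z\in\widehat V_n$ for all $n$.

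The set $\widehat V_n$ contains all of $\mathcal P^c$, so it is far too big to be a test component; it must be cut down to a small set still containing $z$. The key measure fact is that by the Lebesgue density theorem almost every point of $\mathcal P$ is a density point of $\mathcal P$ and thus fails the defining condition of $\widehat V_n$ at all small enough scales, so $\lambda(\widehat V_n\cap\mathcal P)\downarrow 0$ as $n\to\infty$ (by Egorov's theorem this convergence is even uniform off an arbitrarily small subset of $\mathcal P$). We combine this with a clopen over-approximation $\mathcal P^*_m\supseteq\mathcal P$ with $\lambda(\mathcal P^*_m\setminus\mathcal P)\le 2^{-m-1}$, obtainable directly from the enumeration of $\mathcal P^c$, and---crucially---with only a computably bounded number of revisions if one updates $\mathcal P^*_m$ only when the newly enumerated portion of $\mathcal P^c$ has gained mass at least $2^{-2m}$ (total mass being at most $1$, there are at most $2^{2m}$ updates, and after the last one the remaining mass, hence the excess, is below $2^{-2m}$). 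Then for the scale $p(m)$ equal to the least $n$ with $\lambda(\widehat V_n\cap\mathcal P^*_m)\le 2^{-m}$ (such $n$ exists since $\lambda(\widehat V_n\cap\mathcal P^*_m)\le\lambda(\widehat V_n\cap\mathcal P)+2^{-m-1}\to 2^{-m-1}$), the set $S_m=\widehat V_{p(m)}\cap\mathcal P^*_m$ is c.e.\ open, has measure at most $2^{-m}$, and contains $z$, because $z\in\mathcal P\subseteq\mathcal P^*_m$ and $z\in\widehat V_{p(m)}$. Moreover $p(m)$ can be located using $\Halt$ as an oracle, since for a fixed clopen $\mathcal P^*_m$ the measure $\lambda(\widehat V_n\cap\mathcal P^*_m)$ is a left-c.e.\ real uniformly in $n$ and ``$\le 2^{-m}$'' is a $\Pi^0_1$ question.

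The heart of the proof, and the step I expect to be the main obstacle, is to certify that the sequence just built is a genuine Demuth test, i.e.\ that the index function $m\mapsto(\text{a c.e.\ index for }S_m)$ is $\omega$-c.e., so that $S_m=\Opcl{W_{f(m)}}$ with $f\lwtt\Halt$. The dependence of $S_m$ on $\mathcal P^*_m$ contributes only computably many mind changes by the budgeting above; the delicate point is the dependence on the scale $p(m)$, whose naive computation from $\Halt$ is an \emph{unbounded} search and so need not be $\omega$-c.e. One must interleave the search for $p(m)$ with the measure budgeting so that each revision of the current approximation to $S_m$ is charged against a definite decrease of a quantity bounded by $1$---for instance the mass of $\mathcal P^c$ still to be enumerated together with the measure deficit $\lambda(\widehat V_n\cap\mathcal P^*_m)-2^{-m}$---yielding a computable bound on the number of revisions. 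Carrying out this bookkeeping precisely is the technical core of Demuth's argument; once it is in place, $z\in\bigcap_m S_m$ for a Demuth test $(S_m)$, contradicting weak Demuth randomness and completing the contrapositive.
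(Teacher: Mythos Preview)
The paper itself does not give a proof of this theorem; it only records the result and notes Demuth's stronger statement that a \emph{single} Demuth test captures every non-density-one point. So let me assess your sketch on its merits.

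Your outline has the right shape up to the point you yourself flag as ``the technical core'': making the index of $S_m$ $\omega$-c.e. The budgeting for $\mathcal P^*_m$ is fine, but the search for the scale $p(m)$ is a genuine gap, and the bookkeeping you propose does not close it. Each time your current guess $n$ is rejected you have observed $\lambda(\widehat V_n\cap\mathcal P^*_m)>2^{-m}$, and you pass to $n+1$; but $\widehat V_{n+1}\subseteq\widehat V_n$, so the ``deficit'' simply resets, and nothing bounded by $1$ has been definitively consumed. The number of rejections is the final value $p(m)$, which depends on the \emph{rate} at which $\lambda(\widehat V_n\cap\mathcal P)\to 0$, and that rate is not computably bounded in $m$. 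So the scale search is genuinely $\Halt$-computable but not $\omega$-c.e.\ on this evidence.

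The argument that actually works (and that underlies Demuth's, as well as the later Oberwolfach strengthening) avoids the scale search altogether by applying a Hardy--Littlewood type maximal bound to the \emph{tail} of the enumeration of $\mathcal U=\mathcal P^c$. Let $t$ be the last stage at which a chunk of measure at least $(1-q)2^{-m}/2$ has just been enumerated into $\mathcal U$; there are at most $2^{m+1}/(1-q)$ such stages, so $t$ is $\omega$-c.e.\ in $m$. Set $S_m=\bigcup\{I:\lambda((\mathcal U\setminus\mathcal U_t)\cap I)>(1-q)|I|\}$. Since $\lambda(\mathcal U\setminus\mathcal U_t)<(1-q)2^{-m}/2$, the maximal inequality gives $\lambda(S_m)\le 2^{-m}$ directly---no search needed. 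The c.e.\ index of $S_m$ changes only when $t$ does, hence computably boundedly often; and $z\in S_m$ because $\mathcal U_t$ is clopen with $z\notin\mathcal U_t$, so for all sufficiently small $I\ni z$ one has $I\cap\mathcal U_t=\emptyset$ and thus $\lambda((\mathcal U\setminus\mathcal U_t)\cap I)=\lambda(\mathcal U\cap I)>(1-q)|I|$. This is the missing idea: replace ``shrink the scale until the set is small'' by ``bound the set outright via the maximal inequality applied to a small tail,'' which is what makes the $\omega$-c.e.\ bound automatic.
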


Demuth actually proves a stronger result:  there is a single Demuth test $(S_m)\sN{m}$ such that every real for which $z\notin S_m$ for infinitely many $m$ is a density-one point.  A further strengthening was obtained by a group of researchers working at Oberwolfach at the beginning of 2012, who introduced a new notion of randomness that they called \emph{Oberwolfach randomness} (see \cite{Bienvenu.Greenberg.ea:nd}).  We give a definition equivalent to the original one in terms of left-c.e.\ bounded tests.

\begin{definition}
\begin{itemize}
\item[(i)] A \emph{left-c.e.\ bounded test} is an effective descending sequence $(U_m)_{m\in\NN}$ of open sets in $[0,1]$ together with computable increasing sequence of rationals ($\beta_m)_{m\in\NN}$ with limit $\beta$ such that $\lambda(U_m)\leq \beta-\beta_m$ for every $m$.
\item[(ii)] A real $z$ is \emph{Oberwolfach random} if and only if it passes every left-c.e.\ bounded test.
\end{itemize}
\end{definition}

By definition, $\beta$ is a left-c.e.\ real.  As the rate at which $(\beta_m)_{m\in\NN}$  converges to $\beta$ may not be bounded by a computable function, not every left-c.e.\ bounded test is a Martin-L\"of test.  However, since every Martin-L\"of test is a left-c.e.\ bounded test, it follows that every Oberwolfach random real is Martin-L\"of random.  Moreover, one can show that every weakly Demuth random real is Oberwolfach random. The implication is strict.

The Oberwolfach group proved the following, unaware of the fact that they were strengthening a  result of Demuth.

\begin{theorem}[Bienvenu, Greenberg, Ku\v cera, Nies, Turetsky \cite{Bienvenu.Greenberg.ea:nd}]
Every Oberwolfach random is a density-one point.
\end{theorem}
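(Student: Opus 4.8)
The plan is to prove the contrapositive: I will show that if $z\in[0,1]$ is not a density-one point, then $z$ is not Oberwolfach random. Fix an effectively closed class $\mathcal P\ni z$ witnessing this, so the lower density $\rho(\mathcal P\mid z)<1$, and let $\mathcal P^c:=[0,1]\setminus\mathcal P$, an effectively open set. It is a standard fact that being a density-one point is equivalent to the dyadic version of the same notion (alternatively, one runs the argument below using arbitrary rational subintervals), so we may fix a rational $\delta>0$ with $\lambda(\mathcal P^c\cap[z\uh n))\ge\delta\,2^{-n}$ for infinitely many $n$, where $[\sigma)$ is the dyadic interval coded by $\sigma$ as in the proof of Theorem~\ref{thm-uniform-continuity}. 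Enumerate $\mathcal P^c=\bigcup_s\mathcal P^c_s$ by clopen sets $\mathcal P^c_s$, each a finite union of dyadic intervals and with $\mathcal P^c_0=\emptyset$; then $(\lambda(\mathcal P^c_s))_s$ is a computable nondecreasing rational sequence with limit the left-c.e.\ real $b:=\lambda(\mathcal P^c)$.

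\textbf{The test.} From this data I would build a left-c.e.\ bounded test failed by $z$. Let $V_0=[0,1]$, and recursively let a string $\tau$ enter $V_{m+1}$ as soon as $\tau$ is a shortest proper extension of some string in $V_m$ for which, at some stage, the $\mathcal P^c$-mass inside $[\tau)$ that has \emph{appeared after stage $m+1$} has measure at least $\delta\,2^{-|\tau|}$. Regarding each $V_m$ as the open set it generates, the sequence $(V_m)$ is effectively descending, since the strings in $V_{m+1}$ extend those in $V_m$. Now set $U_m:=V_{m+1}$, $\beta_m:=\delta^{-1}\lambda(\mathcal P^c_{m+1})$ and $\beta:=\delta^{-1}b$; I claim that $(U_m)$ together with $(\beta_m)$ is a left-c.e.\ bounded test and that $z\in\bigcap_m U_m$, which together show $z$ is not Oberwolfach random.

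\textbf{Verification.} For the measure bound, the strings of $V_{m+1}$ form an antichain, so the intervals $[\tau)$ ($\tau\in V_{m+1}$) are pairwise disjoint, and each such $\tau$ was certified by a set of measure $\ge\delta\,2^{-|\tau|}$ contained in $\mathcal P^c\setminus\mathcal P^c_{m+1}$; hence $\delta\,\lambda(U_m)=\sum_{\tau\in V_{m+1}}\delta\,2^{-|\tau|}\le\lambda(\mathcal P^c\setminus\mathcal P^c_{m+1})=\delta(\beta-\beta_m)$, so $\lambda(U_m)\le\beta-\beta_m$, and $(\beta_m)$ is a computable increasing rational sequence with limit the left-c.e.\ real $\beta$, exactly as required by the definition. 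For $z\in\bigcap_m U_m$, argue by induction: if $\sigma_m\in V_m$ is an initial segment of $z$, then, since $z\in\mathcal P$ lies outside the clopen set $\mathcal P^c_{m+1}$, there is $N$ with $[z\uh N)\cap\mathcal P^c_{m+1}=\emptyset$; consequently, for every $n\ge N$, the $\mathcal P^c$-mass in $[z\uh n)$ appearing after stage $m+1$ equals $\lambda(\mathcal P^c\cap[z\uh n))$, which by the choice of $\delta$ is $\ge\delta\,2^{-n}$ for some $n\ge N$. Thus some initial segment of $z$ extending $\sigma_m$ enters $V_{m+1}$; the shortest one, which is again an initial segment of $z$, serves as $\sigma_{m+1}$. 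Hence $z$ fails the test $(U_m)$.

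\textbf{Main obstacle.} The step I expect to require the most care is precisely the design choice embodied in the phrase ``after stage $m+1$''. There is a genuine tension between shrinking the measures $\lambda(U_m)$ into the left-c.e.-bounded shape $\beta-\beta_m$ --- which pushes one to discount $\mathcal P^c$-mass that appeared early --- and still forcing $z$ into every $U_m$, which a priori seems to need re-coverings that may happen at any stage. The reconciliation is that $z\in\mathcal P$ eventually escapes every fixed clopen approximation $\mathcal P^c_s$, so insisting on ``late'' mass costs nothing for $z$ while it is exactly what pins the bound to $\delta^{-1}\lambda(\mathcal P^c\setminus\mathcal P^c_{m+1})=\beta-\beta_m$. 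Getting this bookkeeping right --- rather than the combinatorial core, which is essentially the density/Miller--Yu counting --- is where the real work lies. One should not expect to improve this to an ordinary Martin-L\"of test (a computable bound $\lambda(U_m)\le 2^{-m}$), since Martin-L\"of randomness does not imply being a density-one point; this is the structural reason the left-c.e.-bounded format is the appropriate one.
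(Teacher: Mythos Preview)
The paper does not supply a proof of this theorem; it is simply stated and attributed to \cite{Bienvenu.Greenberg.ea:nd}. Your argument is correct and is essentially the proof appearing in that reference: one builds a left-c.e.\ bounded test by charging the $m$-th level only against the portion of $\mathcal P^c$ enumerated \emph{after} stage $m$, so that $\lambda(U_m)\le\delta^{-1}\bigl(\lambda(\mathcal P^c)-\lambda(\mathcal P^c_{m+1})\bigr)=\beta-\beta_m$, while the fact that $z\in\mathcal P$ eventually escapes each clopen stage $\mathcal P^c_{m+1}$ ensures $z$ is still captured.

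Two small technical remarks. First, the passage from the arbitrary-interval definition of density (as stated in the paper) to the dyadic version $[z\uh n)$ is not completely free: the raw dyadic and general lower densities at a point can differ. You may either invoke the known equivalence of the two notions of \emph{density-one point} (quantifying over all $\Pi^0_1$ classes), or, as you note parenthetically, carry out the same construction using arbitrary rational subintervals---the bookkeeping is the same. Second, your set of ``shortest'' strings need not itself be c.e., since minimality under a $\Sigma^0_1$ condition is $\Pi^0_1$; but this is harmless, because the open set $V_{m+1}$ it generates coincides with the manifestly c.e.\ open set obtained by dropping the minimality clause, and the antichain is needed only for the non-effective measure estimate. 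Finally, arrange $(\beta_m)$ to be strictly increasing by thinning the enumeration of $\mathcal P^c$ if necessary.
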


Determining the precise relationship between the following three classes is still open: 
\begin{itemize}
\item[(i)] the Martin-L\"of random reals that are not LR-hard, where a real $z$ is \emph{LR-hard} if every $z$-Martin-L\"of random real is   $\emptyset'$-Martin-L\"of random.
\item[(ii)] the Oberwolfach random reals, 
\item[(iii)] the collection of Martin-L\"of random density-one points.
\end{itemize}
 The known implications for Martin-L\"of random $z$ are as follows:

\begin{center}
$z$ is not LR-hard $\rightarrow$ $z$ is Oberwolfach random $\rightarrow$ $z$ is a density-one point
\end{center}

\subsection{Demuth randomness and lowness notions}

As discussed at the end of $\S$\ref{subsec-denjoy-alternative},  the Demuth randomness of a real  is much too strong for its original purpose, namely, ensuring that the Denjoy alternative holds at this  real for all Markov computable 
functions.  However, since Demuth randomness is stronger than \ML\ randomness but still compatible with being $\DII$,  it interacts well with certain computability-theoretic notions.  In particular, Demuth randomness  has recently turned out to be  very useful for the study of lowness notions. 

A lowness notion is given by a collection of sequences that are in some sense computationally weak.  Many lowness notions take the following form:  For a relativizable collection $\+ S\subseteq 2^{\NN}$, we say that $A$ is \emph{low for $\+S$} if $\+S\subseteq\+S^A$.  For instance, a sequence $A$ such that every Demuth random sequence is Demuth random relative to $A$ is \emph{low for Demuth randomness}.

Another lowness notion is that of being a \emph{base for randomness}.   For a randomness notion $\mathcal{R}$, $A$ is a \emph{base for $\mathcal{R}$-randomness} if $A\leq_T Z$ for some $Z$ that is $\mathcal{R}$-random relative to $A$.  If we let $\mathcal{R}$ be Demuth randomness, this yields the definition of being a base for Demuth randomness.

One additional lowness notion that has received much attention recently is known as \emph{strong jump traceability}.  Recall that a computable order $h$ is a non-decreasing, unbounded computable function such that $h(0)>0$.  If we let $J^A(n)$ denote $\Phi^A_n(n)$, then $A\in 2^\NN$ is \emph{$h$-jump traceable} for a computable order $h$ if there is a uniformly c.e.\ collection of sets $(T_e)_{e\in\NN}$ such that $|T_n|\leq h(n)$ and $J^A(n)\downarrow$ implies that $J^A(n)\in T_n$ for all $n$ (Nies, \cite{Muenster}).  Moreover, $A$ is \emph{strongly jump traceable} if it is $h$-jump traceable for all computable orders $h$ (Figueira, Nies, Stephan, \cite{Figueira.ea:08}).  The notion of tracing is due to Zambella \cite{Zambella:90a} and Terwijn \cite{Terwijn:98}.

Some of the main results on Demuth randomness and lowness notions are as follows:
\begin{itemize}
\item[(i)] \Kuc\ and Nies  \cite{Kucera.Nies:11} proved that every c.e.\ set Turing below a Demuth random is strongly jump traceable. Greenberg and Turetsky \cite{Greenberg.Turetsky:14} have recently  provided a converse of this  result: every  c.e.\ strongly jump traceable set has  a Demuth random set Turing above. 

\medskip
 
\item[(ii)] Nies~\cite{Nies:11} showed that  each base  for Demuth randomness is strongly jump traceable. Greenberg and Turetsky \cite{Greenberg.Turetsky:14} proved that this inclusion is proper.

\medskip

\item[(iii)] Lowness for Demuth randomness and weak Demuth randomness have been characterized by Bienvenu et al.\  \cite{Bienvenu.Downey.ea:nd}. The former is given by a notion called BLR-traceability (first defined by Cole and Simpson in \cite{Cole.Simpson:07}), in  conjunction with being computably dominated. The latter  is the same as  being computable. 
\end{itemize}
	
\section{Randomness, semigenericity, and $\mathrm{tt}$-reducibility}\label{sec-rstt}

In this last section, we discuss Demuth's work published near the end of his life, 
namely his work on $\mathrm{tt}$-reducibility in \cite{Demuth:88} and \cite{Demuth:88a} and his work on semigenericity in   \cite{Demuth:87} and \cite{Demuth.Kucera:87}, the latter paper written jointly with \Kuc. 

\subsection{Reducibilities from constructive analysis}\label{subsec-reducibilities}

In \cite{Demuth:88a}, Demuth proved a number of results connecting truth-table reducibility and various reducibilities from constructive analysis.  These results can be seen as providing bridge principles between certain concepts from computability theory and concepts from constructive analysis.

Recall from $\S$\ref{sec-Markov} that  the operator $R$ maps a Markov computable function $g$ to the maximal continuous extension $R[g]$ of $g$.
Using this operator, Demuth defines the following reduction for pairs of reals.

\begin{definition}
Given $\alpha,\beta\in[0,1]$, $\alpha$ is \emph{$f$-reducible} to $\beta$, denoted $\alpha\leq_f\beta$, if there is a Markov computable function $g$ such that  
 \[
R[g](\beta)=\alpha.
\]
In this case, we say that $\alpha$ is $f$-reducible to $\beta$ \emph{via $g$}.
\end{definition}

The relation $\leq_f$ is transitive.  This follows from the fact that for any Markov computable functions $g_1,g_2$, $R[g_1\circ g_2]=R[g_1]\circ R[g_2]$, which can be routinely verified.

Even though $\alpha$ and $\beta$ may be highly non-constructive reals, the reduction from $\alpha$ to $\beta$ is in a sense constructively grounded, being witnessed by the extension of a Markov computable function.  

Demuth then introduces three variants of $f$-reducibility:

\begin{definition}
\begin{enumerate}
\item $\alpha$ is \emph{$\emptyset$-$\mathrm{ucf}$-reducible}
to $\beta$, denoted $\alpha\leq_{\ES\text{-}\mathrm{ucf}}\beta$, if $\alpha$ is $f$-reducible to $\beta$ via a Markov computable function $g$ that is $\emptyset$-uniformly continuous.

\smallskip

\item $\alpha$ is  \emph{$\emptyset'$-$\mathrm{ucf}$-reducible}
to $\beta$, denoted $\alpha\leq_{\ES'\text{-}\mathrm{ucf}}\beta$, if $\alpha$ is $f$-reducible to $\beta$ via a Markov computable function $g$ that is $\emptyset'$-uniformly continuous.

\smallskip

\item $\alpha$ is  \emph{$\mathrm{mf}$-reducible}
to $\beta$, denoted $\alpha\leq_{\mathrm{mf}}\beta$, if $\alpha$ is $f$-reducible to $\beta$ via a Markov computable function $g$ that is monotonically increasing.
\end{enumerate}
\end{definition}

In order to compare these reducibilities to those from classical computability theory, Demuth identifies infinite sequences in $2^\NN$ with reals in [0,1].  Further, Demuth excludes a set $\mathcal{C}\subseteq 2^{\NN}$ of $\emptyset$-measure zero that contains all finite and cofinite sequences.

%

Demuth then proves the following:

\begin{theorem}[Demuth \cite{Demuth:88a}]\label{thm-tt-ucf}

\begin{enumerate}
\item For any $\emptyset$-uniformly continuous Markov computable function $f$, one can uniformly obtain an index of a $\mathrm{tt}$-functional $\Phi$ such that for every $A,B\in2^\NN$ such that $B\notin\mathcal{C}$,
\[
A\leq_{\ES\text{-}\mathrm{ucf}}B\;\text{via}\;f\;\text{if and only if}\;A\leq_{\mathrm{tt}}B\;\text{via}\;\Phi.
\]
\item For any $\mathrm{tt}$-functional $\Phi$, one can uniformly obtain the index of a $\emptyset$-uniformly continuous Markov computable function $f:[0,1]\rightarrow[0,1]$ such that for any $A,B\in 2^\NN$ such that $A,B\notin\mathcal{C}$
\[
A\leq_{\ES\text{-}\mathrm{ucf}}B\;\text{via}\;f\;\text{if and only if}\;A\leq_{\mathrm{tt}}B\;\text{via}\;\Phi.
\]
\end{enumerate}
\end{theorem}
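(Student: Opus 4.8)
The plan is to reduce the statement to a clean correspondence between standard computable functions $[0,1]\to[0,1]$ and $\mathrm{tt}$-functionals on $\seqcantor$, and then transport a real's binary expansion across in both directions, the exceptional set $\CCC$ being exactly what lets one ignore the mismatch between the two spaces at dyadic rationals. Recall from the discussion after Theorem~\ref{thm-uniform-continuity} that, uniformly on indices, a $\ES$-uniformly continuous Markov computable function $g$ and its maximal continuous extension $R[g]$ carry the same data: $R[g]$ is a standard computable function with the \emph{same} computable modulus of uniform continuity, and restricting any standard computable $F\colon[0,1]\to[0,1]$ to $\RR_c$ returns such a $g$ with $R[g]=F$. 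Since $A\leq_{\ES\text{-}\mathrm{ucf}}B$ via $g$ just says $R[g](0.B)=0.A$ (writing $0.X$ for the real with binary expansion $X$), it suffices to produce, uniformly: (1) from a standard computable $F$ with computable modulus of uniform continuity $p$, a $\mathrm{tt}$-functional $\Phi$ with $\Phi^B=A$ precisely when $F(0.B)=0.A$, for every $B\notin\CCC$; and (2) from a $\mathrm{tt}$-functional $\Phi$, such an $F$, with the agreement holding for $A,B\notin\CCC$.

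For (1) I would build $\Phi$ directly from $p$ and from the fact that $F$ can be evaluated on dyadic rationals to arbitrary precision, uniformly. Since $|0.B-0.(B\uhr{p(m)})|\le 2^{-p(m)}$ we get $|F(0.B)-F(0.(B\uhr{p(m)}))|\le 2^{-m}$, so from $B\uhr{p(m)}$ one computes a rational interval of length at most $2^{-m+2}$ trapping $F(0.B)$; to output the first $n$ bits of $A$ one increases $m$ until this interval no longer straddles a multiple of $2^{-n}$ and reads off the forced digits. To make $\Phi$ a genuine $\mathrm{tt}$-functional --- total on \emph{every} oracle and with a computable use --- one fixes a computable use bound $u$ in advance (e.g.\ $u(n)=p(n+2)$) and, if $B\uhr{u(n)}$ still leaves the $n$th digit undecided, emits a fixed default tail. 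For $B\notin\CCC$ the real $0.B$ is non-dyadic, the search succeeds within use $u$, and $\Phi^B=A$ exactly when $0.A=F(0.B)$; the one friction, when $F(0.B)$ is itself dyadic, is removed by interpreting the output of $\Phi$ and the meaning of ``$=0.A$'' with the same tie-breaking rule (say, the name not eventually equal to~$1$), which confines the residual ambiguity to $\CCC$. All of this is uniform in an index for $g$, since $p$ and the dyadic-evaluation procedure come out uniformly.

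For (2), given $\Phi$ with computable use bound $u$ (so $\Phi^B(n)$ is determined by $B\uhr{u(n)}$), I would realise $F$ as a uniform limit of piecewise-linear approximations. For $\sigma\in\strcantor$ of length $u(k)$, all $\Phi^B$ with $B\succeq\sigma$ share a common initial segment $\tau_\sigma$ of some length $\ell_k$, with $\ell_k\to\infty$ computably in $u$; let $F_k$ be the piecewise-linear function through the points $(0.\sigma,\,0.\tau_\sigma)$ together with the right endpoint~$1$. Because $\tau_{\sigma\rho}$ always extends $\tau_\sigma$, consecutive $F_k$ differ by at most $2^{-\ell_k}$ in sup-norm, so $F:=\lim_k F_k$ is a standard computable function with a computable modulus of uniform continuity and values in $[0,1]$ (the reals coded by $\Phi$'s outputs already lie there); restricting to $\RR_c$ yields $g$ with $R[g]=F$, uniformly in an index for $\Phi$. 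For $A,B\notin\CCC$ the point $0.B$ is non-dyadic, hence interior to the nested dyadic intervals based at $B\uhr{u(k)}$, so $F(0.B)=0.\Phi^B$; and since $A\notin\CCC$ has a unique binary name, $0.A=F(0.B)$ forces $A=\Phi^B$.

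The main obstacle is the genuine mismatch between the totally disconnected $\seqcantor$ and the connected interval $[0,1]$ at the dyadic rationals, where the expansion map is two-to-one: a $\mathrm{tt}$-functional may behave incompatibly on the two names of a dyadic --- in which case the approximations $F_k$ of part~(2) converge to a discontinuous limit --- and dually ``$R[g](0.B)=0.A$'' is ambiguous whenever $R[g](0.B)$ is dyadic. The real content therefore lies in two places: choosing $\CCC$ --- a set of $\ES$-measure zero containing the eventually-constant sequences and large enough to absorb these boundary phenomena --- and fixing compatible tie-breaking conventions on the Cantor-space side and on the interval side so that the biconditional holds verbatim off $\CCC$; for part~(2) this in effect requires checking, off $\CCC$, that the $\Phi$ at hand is boundary-consistent, which is where one must lean on the precise setup in \cite{Demuth:88a}. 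The remaining steps --- the modulus and use bookkeeping above, and the routine verifications that $F$ stays in $[0,1]$ and that $g$ can be taken constant outside the unit interval --- are straightforward.
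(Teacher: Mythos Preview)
The paper itself does not prove this theorem; it only states the result and refers to Demuth's original article \cite{Demuth:88a}. So there is no proof in the paper against which to compare your attempt, and I can only assess the sketch on its own terms.

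Your overall plan is the natural one, and you correctly isolate the genuine difficulty as the mismatch at dyadic rationals. But two of the concrete steps do not work as written.

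In part~(1) you claim that with use bound $u(n)=p(n+2)$, ``for $B\notin\CCC$ the real $0.B$ is non-dyadic, the search succeeds within use $u$''. This conflates input and output: whether $B\uhr{p(n+2)}$ determines the $n$th bit of $\Phi^B$ depends on whether $F(0.B)$ lies within roughly $2^{-(n+2)}$ of some $k/2^n$, not on whether $0.B$ itself is dyadic. For any computable use bound fixed in advance there will be $B$ with $0.B$ irrational --- indeed $B\notin\CCC$ for any measure-zero $\CCC$ chosen independently of $f$ --- at which the bound is insufficient, since the set of problematic $B$ depends on $f$. Your default-tail device then makes $\Phi^B$ disagree with every binary expansion of $F(0.B)$ for such $B$, breaking the biconditional.

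In part~(2) the assertion that $\|F_{k+1}-F_k\|_\infty\le 2^{-\ell_k}$ because ``$\tau_{\sigma\rho}$ extends $\tau_\sigma$'' is not correct. That bound holds at the grid points $0.\sigma$, but linear interpolation does not preserve it: if two adjacent strings $\sigma,\sigma^+$ of length $u(k)$ have $\tau_\sigma$ and $\tau_{\sigma^+}$ differing in an early bit (exactly the boundary incompatibility you later flag), then $F_k$ ramps between $0.\tau_\sigma$ and $0.\tau_{\sigma^+}$ on an interval of length $2^{-u(k)}$, while $F_{k+1}$ stays near $0.\tau_\sigma$ on almost all of that interval; the sup-norm gap is then of order $|0.\tau_\sigma-0.\tau_{\sigma^+}|$, not $2^{-\ell_k}$. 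So the Cauchy-in-sup-norm argument for a continuous limit $F$ fails in general --- something you effectively concede in your final paragraph but not in the body of the construction.

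In short, you have the right picture and the right diagnosis of where the work lies, but neither construction as written delivers the claimed object off a fixed $\CCC$; the actual content --- how $\CCC$ and the boundary conventions are arranged so that a single exceptional set serves uniformly --- is precisely what your sketch defers to \cite{Demuth:88a}.
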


Demuth also proved that Theorem \ref{thm-tt-ucf} can be relativized to $\ES'$ by using the notions of $\ES'$-$\mathrm{ucf}$-reducibility and $\ES'$-$\mathrm{tt}$-reducibility, excluding sequences from a set $\widehat{\mathcal{C}}\subseteq2^{\NN}$ of $\ES'$-measure zero.  In addition, Demuth proved similar results for $\mathrm{tt}$-reducibility and $\mathrm{mf}$-reducibility (see Theorem 13 and 14 of \cite{Demuth:88a}).

These theorems relating $\mathrm{tt}$-reducibility and the reducibilities from constructive analysis were essentially used in Demuth's proof of a theorem on the behavior of Martin-L\"of random reals under $\mathrm{tt}$-reducibility, to which we now turn.

\subsection{Truth-table reductions to random sequences}

The following is one of the most well-studied of Demuth's results (for instance, in \cite{Kautz:91}), which is referred to as ``Demuth's Theorem" in \cite{Downey.Hirschfeldt:book}.  We formulate the result here in terms of members of $2^\NN$.

\begin{theorem}[Demuth \cite{Demuth:88}]\label{thm:demuth}
If $B$ is non-computable and $\mathrm{tt}$-reducible to a Martin-L\"of random $A$, then there is a Martin-L\"of random $C$ such that
\[
B\leq_{\mathrm{tt}}C\leq_T B.
\]
\end{theorem}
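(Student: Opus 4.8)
The plan is to exploit the fact that a $\mathrm{tt}$-reduction $\Phi$ with $\Phi^A = B$ is total on all oracles, so it pushes the uniform measure on Cantor space forward to a computable measure $\mu$ on $2^\NN$, namely $\mu(\Cyl\sigma) = \leb(\{X : \sigma \preceq \Phi^X\})$. Since $B$ is non-computable, $\mu$ is not atomic at $B$ — indeed for every $n$ the set $\{X : B\uh n \preceq \Phi^X\}$ has positive measure but the measures $\mu(\Cyl{B\uh n})$ tend to $0$ (if they were bounded below, $B$ would be computed by a measure-computation argument, contradicting non-computability; this is where non-computability of $B$ is essential). The strategy is then to produce, inside the positive-measure $\Pi^0_1$-in-$A$ class $\+P = \{X : \Phi^X = B\}$, a member $C$ that is Martin-L\"of random (relative to nothing) and still satisfies $C \leq_T B$. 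The reduction $B \leq_{\mathrm{tt}} C$ comes for free: $C \in \+P$ means $\Phi^C = B$.

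First I would set up the pushforward measure $\mu$ and observe it is a computable measure (its values on cylinders are computable reals, uniformly, because $\Phi$ is a $\mathrm{tt}$-functional and finite unions of basic open sets have computable Lebesgue measure). Next I would invoke a preservation-of-randomness principle for computable measures: a measure-theoretic change of variables shows that if $X$ is Martin-L\"of random for Lebesgue measure then $\Phi^X$ is Martin-L\"of random for $\mu$ (a $\mu$-ML test pulls back along $\Phi^{-1}$, which is continuous, to an ordinary ML test). Hence $B$ is $\mu$-ML-random. Now I would use a "randomness-preserving map" in the other direction: because $\mu$ is atomless along $B$ (the key consequence of non-computability established above), there is a $B$-computable, measure-preserving, injective-enough map $h$ from a co-null subset of the $\mu$-space into Cantor space such that $h(B)$ is Lebesgue-ML-random whenever $B$ is $\mu$-ML-random, and conversely $B$ can be recovered from $h(B)$ together with the index data — in effect, $h$ realizes $B$ as the image of a genuine ML-random real under a $B$-computable decoding, and one can arrange the decoding to be a $\mathrm{tt}$-style map so that this new real $C := h(B)$ satisfies $C \equiv_T B$ and $\Phi'^C = B$ for an appropriate $\mathrm{tt}$-functional $\Phi'$. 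The role of the results of $\S$\ref{subsec-reducibilities} (Theorem \ref{thm-tt-ucf}) is precisely to let one pass between these constructive-analytic maps and honest $\mathrm{tt}$-functionals, so that all reductions obtained are $\mathrm{tt}$ rather than merely Turing where needed, and to guarantee $B \leq_{\mathrm{tt}} C$.

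Concretely, the construction of $C$ is a "conditional random real": using $B$ as an oracle, subdivide $[0,1]$ according to the conditional measures $\mu(\Cyl{B\uh{n+1}})/\mu(\Cyl{B\uh n})$, and feed in a Lebesgue-ML-random seed to select, within each level, the next bit of a string $C$ that decodes back to $B$; one shows $C$ is itself Lebesgue-ML-random (since randomness is preserved under the $B$-computable measure-preserving correspondence, and the atomlessness ensures the correspondence does not collapse onto a computable point) while $C \leq_T B$ (the whole subdivision scheme is $B$-computable) and $B \leq_{\mathrm{tt}} C$ (the decoding is a total functional). Finally I would note that Theorem \ref{thm-demuth-jump-inversion}-style tricks are not needed here; what is needed is just the atomless-measure machinery plus Theorem \ref{thm-tt-ucf}.

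The main obstacle I expect is the middle step: manufacturing the Lebesgue-ML-random $C$ with $C \leq_T B$ from the $\mu$-ML-random $B$. One must simultaneously (a) preserve ML-randomness under a measure-preserving change of variables given only by a $B$-oracle, which requires the conditional measures to be sufficiently effective (this is exactly where $\mu$ being a computable measure and $B$ being non-computable — hence the relevant $\Pi^0_1(A)$ class having no computable, and in the right sense no "$\mu$-atomic", point along $B$ — gets used), and (b) keep the whole correspondence inside the $\mathrm{tt}$ world so that the final reductions are $\mathrm{tt}$-reductions and not merely Turing reductions, which is where the bridge theorems of the previous subsection do the real work. Managing the atoms/non-computability dichotomy carefully — ruling out the degenerate case where $\mu$ concentrates on $B$, which would make $B$ computable — is the heart of the argument.
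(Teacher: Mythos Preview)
Your plan is essentially the Kautz/Levin reconstruction that the paper presents alongside Demuth's original argument, and the paper explicitly notes the two are close: the monotone Markov computable function $g$ in Demuth's proof \emph{is} the distribution function of the pushforward measure $\lambda_\Phi$. So you have the right skeleton---pushforward to a computable $\mu$, Levin's preservation of randomness to get $B$ $\mu$-random, atomlessness at $B$ from non-computability, and a measure-isomorphism back to Lebesgue carrying $B$ to a Lebesgue-random $C$---and you correctly invoke Theorem~\ref{thm-tt-ucf} (and its monotone variant) to upgrade the resulting $\mathrm{mf}$-reduction to a $\mathrm{tt}$-reduction, which is exactly how Demuth gets $B \leq_{\mathrm{tt}} C$ rather than merely $B \leq_T C$.

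One confusion to fix: the phrase ``feed in a Lebesgue-ML-random seed'' is not how the construction works and, taken literally, would destroy $C \leq_T B$. No external seed is used; $C$ is \emph{determined} by $B$ as (essentially) the image of $B$ under the cumulative distribution function $F_\mu$ of $\mu$, i.e.\ the real whose Lebesgue-position in $[0,1]$ equals the $\mu$-position of $B$. Since $\mu$ is computable, this gives $C \leq_T B$ directly; atomlessness at $B$ makes $F_\mu$ locally invertible there, and the inverse is a monotone Markov computable function---Demuth's $g$---yielding $B \leq_{\mathrm{mf}} C$, hence $B \leq_{\mathrm{tt}} C$ via the bridge theorems. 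Replace the seed language with the distribution-function construction and your sketch matches both Kautz's proof and, in constructive-analytic dress, Demuth's.
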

Following Kautz's reconstruction in \cite{Kautz:91}, in which he only proves that $B\equiv_T C$, recent proofs of this result are given in terms of computable measures (see \cite[Section 8.6]{Downey.Hirschfeldt:book}).  A measure $\mu$ on $2^\NN$ is computable if $\mu(\Opcl\sigma)$ is a computable real uniformly in $\sigma\in2^{<\NN}$.

While the standard definition of Martin-L\"of randomness is formulated in terms of the Lebesgue measure, for any computable measure $\mu$  one can also define Martin-L\"of randomness with respect to $\mu$ simply by replacing the condition $\lambda(U_i)\leq 2^{-i}$ with $\mu(U_i)\leq 2^{-i}$ for each Martin-L\"of test $(U_n)_{n\in\NN}$.

Kautz recognized that Demuth's result follows from several facts about randomness and measures.  First, for any $\mathrm{tt}$-functional $\Phi$, the measure $\lambda_\Phi$ defined by $\lambda_\Phi(\Opcl\sigma)=\lambda(\Phi^{-1}(\Opcl\sigma))$ is a computable measure.  One can show this using Nerode's characterization of $\mathrm{tt}$-functionals as total Turing functionals (see \cite{Nerode:57a}).  Second, for any $\mathrm{tt}$-functional $\Phi$ and any Martin-L\"of random $A$, $\Phi(A)$ is Martin-L\"of random with respect to $\lambda_\Phi$ (a result due to Levin \cite{Levin.Zvonkin:70}).
Third, as shown by Kautz (and independently and earlier by Levin; see \cite{Levin.Zvonkin:70}), for any computable measure $\mu$, if $A$ is Martin-L\"of random with respect to $\mu$ and is not computable, then there is a Martin-L\"of random $B$ (with respect to $\lambda$) such that $A\equiv_T B$.

On the surface, Demuth's proof of his result takes a very different approach.  A rough sketch of his proof is as follows.  First, Demuth applies part 2 of Theorem \ref{thm-tt-ucf} from the previous section to replace the initial $\mathrm{tt}$-reduction $\Phi$ with an $\ES$-$\mathrm{ucf}$ reduction from $B$ to $A$ given by some Markov computable function $f$.  From this function $f$, Demuth then defines a monotone Markov computable function $g$, which allows him to construct (effectively in $B$) the set $C$ and an $\mathrm{mf}$-reduction from $B$ to $C$.  Lastly, by part 1 of Theorem \ref{thm-tt-ucf}, this $\mathrm{mf}$-reduction yields the desired $\mathrm{tt}$-reduction from $B$ to $C$.

Close examination of Demuth's proof shows that the function $g$ witnessing the $\mathrm{mf}$-reduction in his proof is the distribution function of the computable measure induced by the initial $\mathrm{tt}$-functional $\Phi$.  The use of distribution functions is at the heart of Kautz's proof, which shows that Demuth's proof is not too dissimilar from Kautz's reconstruction.

%
%
%
%
%

Demuth's result is, in a sense, the best possible.  One might hope to improve the theorem by showing the existence of a Martin-L\"of random $C$ such that $B\leq_{\mathrm{tt}}C\leq_{\mathrm{wtt}}B$, or even $B\equiv_{\mathrm{tt}} C$.  But this cannot be achieved, as shown by the following theorem.

\begin{theorem}[Bienvenu, Porter \cite{Bienvenu.Porter:12}]\label{thm:demuth-wtt}
There is a Martin-L\"of random $A$ and a $\mathrm{tt}$-functional $\Phi$ such that $\Phi(A)$ is non-computable and cannot $\mathrm{wtt}$-compute any Martin-L\"of random.
\end{theorem}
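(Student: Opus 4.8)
The plan is to recast the statement in the language of computable measures, following Kautz's analysis of Demuth's Theorem~\ref{thm:demuth}. Recall that for a $\mathrm{tt}$-functional $\Phi$ the image measure $\lambda_\Phi$, given by $\lambda_\Phi(\Opcl\sigma)=\lambda(\Phi^{-1}(\Opcl\sigma))$, is computable (by Nerode's characterization of $\mathrm{tt}$-functionals), and $\Phi(A)$ is $\lambda_\Phi$-{\ML} random whenever $A$ is {\ML} random (Levin); the same holds with any computable measure in place of $\lambda$. Conversely, if $\mu$ is a computable measure that \emph{spreads out computably fast}, i.e.\ the least $n$ with $\max_{|\sigma|=n}\mu(\Opcl\sigma)<2^{-m}$ is bounded by a computable function of $m$, one can build a $\mathrm{tt}$-functional $\Phi$ with $\lambda_\Phi=\mu$ that is a measure isomorphism off a null set, so that $B$ is $\mu$-{\ML} random iff $\Phi^{-1}(B)$ is {\ML} random. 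Granting this, it suffices to construct a computable measure $\mu$ on $\cantor$ that (a) is non-atomic and spreads out computably fast, and (b) is such that for every $\mathrm{wtt}$-functional $\Gamma$ the image $\Gamma_*\mu$ is concentrated on an effectively closed set $\mathcal C_\Gamma$ of Lebesgue measure zero. Property (a) makes every $\mu$-{\ML} random real non-computable (a computable $z$ is caught by the $\mu$-test $(\Opcl{z\uh{n_m}})_m$ with $n_m$ least such that $\mu(\Opcl{z\uh n})\le 2^{-m}$). Property (b) makes $\Gamma(B)$ non-{\ML} random for every $\mu$-{\ML} random $B$: by randomness conservation $\Gamma(B)$ is $\Gamma_*\mu$-{\ML} random, hence lies in $\mathcal C_\Gamma$, and an effectively closed Lebesgue-null set is a Lebesgue-{\ML}-test (its clopen approximations have computable measures decreasing to $0$). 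So no $\mu$-{\ML} random $B$ $\mathrm{wtt}$-computes an {\ML} random, and taking $B=\Phi(A)$ for {\ML} random $A$ completes the proof.

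The measure $\mu$ is built by finite extension, defining $\mu(\Opcl\sigma)$ by recursion on $|\sigma|$ so as to meet, for each $\mathrm{wtt}$-procedure $\Gamma_e$ (a computable map $X\uh{u_e(n)}\mapsto\Gamma_e(X)(n)$ with $u_e$ a computable non-decreasing use bound) and each $k\ge1$, the requirement
\[
R_{e,k}:\qquad\text{the support of $\mu$ lies in }\{X:\Gamma_e(X)\uh{L_{e,k}}\in G_{e,k}\}\text{ for some }G_{e,k}\sub 2^{L_{e,k}},\ |G_{e,k}|\le 2^{L_{e,k}-k}.
\]
Working on $R_{e,k}$ at a stage where $\mu$ has been defined up to length $d$ with a current finite antichain $F$ of live strings carrying all the mass, we choose a fresh large scale $L=L_{e,k}$ (bigger than all earlier ones for $\Gamma_e$), extend $\mu$ by halving the mass at each new bit (``free splits'') out to length $u_e(L-1)$, inspect the finitely many values $\Gamma_e(\sigma)\uh L$ for live $\sigma$ of that length, let $G_{e,k}$ be the $2^{L-k}$ most $\mu$-massive of these values, discard each live $\sigma$ whose value falls outside $G_{e,k}$, and renormalize. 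Since there are at most $2^L$ such values, $G_{e,k}$ retains at least a $2^{-k}$ fraction of the current mass, so the support stays non-empty; and since $F$ already lies inside the $(k-1)$-st restriction for $\Gamma_e$, every element of $G_{e,k}$ has its length-$L_{e,k-1}$ prefix in $G_{e,k-1}$, so the restrictions are nested and $\mathcal C_{\Gamma_e}:=\bigcap_k\{X:\Gamma_e(X)\uh{L_{e,k}}\in G_{e,k}\}$ is effectively closed of Lebesgue measure at most $\prod_k 2^{-k}=0$. After each restriction we also perform at least $k+1$ further free splits before passing to the next requirement (and we dovetail the evaluation of $u_e$, so that $\mathrm{wtt}$-procedures with non-total use bounds, which are not genuine $\mathrm{wtt}$-functionals and hence no threat, do not stall the construction).

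The verification is then routine. The free splits force the mass along every branch to tend to $0$, so $\mu$ is non-atomic; tracking the computable quantity $n\mapsto n-\sum\{k_j:\text{stage }j\text{ is completed below depth }n\}$ shows that the live branches at depth $n$ keep $\mu$-mass bounded below by a computable function of $n$ (each renormalization multiplies surviving masses by at most $2^{k_j}$, which is dominated by the $\ge k_j+1$ free splits that follow it), which is the ``spreads out computably fast'' condition. Finally, every $\mu$-{\ML} random $B$ avoids each clopen set of $\mu$-measure $0$, and $\{X:\Gamma_e(X)\uh{L_{e,k}}\in G_{e,k}\}$ has $\mu$-measure $1$ by construction, so $\Gamma_e(B)\in\mathcal C_{\Gamma_e}$, giving (b); and $\mu$ spreads out computably fast, so it is realised as $\lambda_\Phi$ for a $\mathrm{tt}$-functional $\Phi$ as in the first paragraph.

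The hard part is the tension exposed in the verification: each $R_{e,k}$ wants to throw away almost all of the remaining $\mu$-mass, while non-atomicity, and still more the demand that $\Phi$ be a genuine \emph{truth-table} (not merely Turing) functional, require $\mu$ to stay ``fat'', with a computable lower bound on the mass of its live branches at each depth. Reconciling these is precisely what dictates the interleaving of the restrictions with a carefully counted number of free splits and the explicit control of the spreading rate; and one should also verify with some care the converse half of the measure-theoretic reduction --- that a computable, non-atomic, computably-spreading measure is realised as $\lambda_\Phi$ for a $\mathrm{tt}$-functional $\Phi$ in a randomness-preserving way --- which is implicit in, but not spelled out by, Kautz's treatment of Theorem~\ref{thm:demuth}.
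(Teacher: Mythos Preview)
The paper does not prove this theorem; it merely states it with attribution to \cite{Bienvenu.Porter:12}. Your proposal is in fact the approach of that cited paper: construct a computable non-atomic measure $\mu$ whose support is carried by every $\mathrm{wtt}$-functional into an effectively closed Lebesgue-null class, and realise $\mu$ as $\lambda_\Phi$ for a $\mathrm{tt}$-functional $\Phi$ using the computable spreading rate.

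There is, however, a genuine technical gap in the construction as you describe it. You say that at the stage for $R_{e,k}$ you first ``extend $\mu$ by halving the mass at each new bit out to length $u_e(L-1)$'', then choose $G_{e,k}$, discard the live $\sigma$ whose $\Gamma_e$-image misses $G_{e,k}$, and \emph{renormalize}, multiplying surviving masses by a factor $\le 2^{k}$. But a measure must satisfy $\mu(\Opcl\sigma)=\mu(\Opcl{\sigma 0})+\mu(\Opcl{\sigma 1})$: once the free splits have committed $\mu(\Opcl\sigma)>0$ at depth $u_e(L-1)$ you can neither rescale that value nor set both children to $0$. A related problem is that taking $G_{e,k}$ to be the globally most massive $2^{L-k}$ values can leave some $\sigma\in F$ with \emph{no} surviving extension at all, so its committed mass has nowhere to go.

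The repair is to reverse the order of operations. With $\mu$ committed only to depth $d$ and $F$ the current live antichain, choose $L\ge\log_2|F|+k$; for each $\sigma\in F$ pick one extension $\tau_\sigma$ of length $u_e(L-1)$ and set $G_{e,k}=\{\Gamma_e(\tau_\sigma)\uh L:\sigma\in F\}$, so $|G_{e,k}|\le|F|\le 2^{L-k}$; then \emph{define} $\mu$ between depths $d$ and $u_e(L-1)$ by routing all of $\mu(\Opcl\sigma)$ down to $\tau_\sigma$; and only afterwards perform the free splits. This keeps $\mu$ additive with total mass $1$, guarantees every live string survives, and actually simplifies your spreading bookkeeping (concentration steps never increase the maximal cylinder mass, so only free splits count). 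One further slip: your $\mathcal C_{\Gamma_e}$ should live in the \emph{output} space, $\mathcal C_{\Gamma_e}=\bigcap_k\{Y:Y\uh{L_{e,k}}\in G_{e,k}\}$, for the Lebesgue-measure estimate $\le 2^{-k}$ to be correct. With these fixes the rest of your verification goes through.
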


In this same paper \cite{Bienvenu.Porter:12}, using the technique discussed above, the following result was shown without the authors being aware that Demuth had already proved it.

\begin{theorem}[Demuth \cite{Demuth:87}]
There is a $\mathrm{tt}$-degree containing both a c.e.\ set and a Martin-L\"of random set. Thus there is some c.e.\ set $S\in2^\NN$ that is Martin-L\"of random with respect to some computable measure.
\end{theorem}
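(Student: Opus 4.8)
The plan is to produce a single left-c.e.\ \ML\ random real and pair it with an explicit c.e.\ set, then check by hand that the two are truth-table equivalent. Let $\Omega$ be Chaitin's halting probability, with binary expansion $\Omega = 0.\Omega_1\Omega_2\cdots$; recall that $\Omega$ is left-c.e., so there is a computable nondecreasing sequence of dyadic rationals converging to it, and that $\Omega$ is \ML\ random, hence irrational and non-computable, with $0 < \Omega < 1$. Set
\[
S = \{\, q : q \text{ is a dyadic rational and } q < \Omega\,\},
\]
a c.e.\ set of rationals, which, fixing a computable coding of the dyadic rationals by $\NN$, we identify with a c.e.\ member of $2^{\NN}$. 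The claim is that $S \equiv_{\mathrm{tt}} \Omega$.

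First I would verify $S \leq_{\mathrm{tt}} \Omega$. Given a dyadic rational $q$, dispatch the trivial cases $q < 0$ (then $q \in S$) and $q \geq 1$ (then $q \notin S$) with a constant answer; otherwise write $q = a/2^m$ with $0 \leq a < 2^m$, query $\Omega$ at positions $1,\dots,m$ to obtain the integer $b = \sum_{i=1}^{m}\Omega_i 2^{m-i} = \lfloor 2^m\Omega\rfloor$, and answer ``$q \in S$'' precisely when $a \leq b$ (correct since $\Omega$ is irrational). The set of positions queried is finite and computably given, and the procedure halts on every oracle, so this is a genuine tt-reduction. Then I would verify $\Omega \leq_{\mathrm{tt}} S$: since $\lfloor 2^n\Omega\rfloor$ has binary representation $\Omega_1\cdots\Omega_n$, the bit $\Omega_n$ is the parity of $\lfloor 2^n\Omega\rfloor$, and $\lfloor 2^n\Omega\rfloor = \max\{\, k : 0 \leq k < 2^n \text{ and } k/2^n \in S\,\}$; hence $\Omega_n$ is recovered from the $2^n$ membership queries ``$k/2^n \in S$?'', which is again a computably bounded, everywhere-halting procedure, hence a tt-reduction. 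Consequently $S$ is a c.e.\ set lying in the tt-degree of the \ML\ random set $\Omega$ (and $S$ is automatically non-computable, since $\Omega$ is), which gives the first assertion. In fact the same argument shows that every left-c.e.\ real is tt-equivalent to its c.e.\ left cut of dyadic rationals.

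For the ``Thus'' clause, let $\Phi$ be the tt-functional implementing the reduction $S \leq_{\mathrm{tt}} \Omega$ above, so that $\Phi^\Omega = S$. By the facts recalled in the discussion of Kautz's reconstruction, the push-forward measure $\lambda_\Phi$, defined by $\lambda_\Phi(\Opcl\sigma) = \lambda(\Phi^{-1}(\Opcl\sigma))$, is a computable measure, and by Levin's theorem $\Phi$ maps every \ML\ random real to a $\lambda_\Phi$-\ML\ random real. Since $\Omega$ is \ML\ random, $S = \Phi^\Omega$ is \ML\ random with respect to the computable measure $\lambda_\Phi$, as required.

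The step that must not be glossed over is that \emph{both} of the reductions above are honest tt-functionals — everywhere total, with a computably given finite query set — and not merely wtt-reductions. This is precisely why the argument needs (i) a \emph{left-c.e.} random real, so that its left cut is c.e., and (ii) the left cut taken over \emph{dyadic} rationals, so that the comparison ``$q < \Omega$'' is decided by a fixed finite initial segment of $\Omega$ via the integer inequality $a \leq \lfloor 2^m\Omega\rfloor$, rather than by an unbounded search for precision enough to separate $q$ from $\Omega$. Beyond this, everything is routine: Demuth's own derivation, which emerges from his work on semigenericity in \cite{Demuth:87}, and the later independent rediscovery phrased via push-forward measures, both reduce to this single observation.
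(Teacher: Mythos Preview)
Your argument is correct. Both reductions are genuine $\mathrm{tt}$-reductions as you claim: the crucial point, which you identify, is that the irrationality of $\Omega$ lets a fixed-length initial segment decide the comparison $q<\Omega$ exactly, so no unbounded search is needed in either direction. The ``Thus'' clause follows immediately from Levin's preservation theorem, just as you say.

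The paper does not spell out a proof of this theorem; it attributes the result to Demuth and remarks that Bienvenu and Porter rediscovered it ``using the technique discussed above,'' meaning the push-forward measure machinery surrounding Demuth's Theorem~\ref{thm:demuth}. Your route is more elementary for the first assertion: rather than invoking the general structure theorem that any non-computable $\mathrm{tt}$-image of a \ML\ random is $\mathrm{tt}$-below and Turing-above some \ML\ random, you simply exhibit the pair $(\Omega, S)$ by hand and verify the $\mathrm{tt}$-equivalence directly. This buys concreteness and avoids the detour through distribution functions; the paper's implied route, by contrast, situates the result as a corollary of the broader theory of computable measures and explains \emph{why} such c.e.\ sets must exist whenever a $\mathrm{tt}$-functional sends a random to something non-computable. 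For the second assertion you converge with the paper's framework, since the push-forward $\lambda_\Phi$ is exactly the measure that witnesses $S$ as random. Your parenthetical observation that every left-c.e.\ real is $\mathrm{tt}$-equivalent to its dyadic left cut is correct and is really the whole content of the first part.
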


\subsection{Semigenericity}\label{subsec-semigenericity}

Researchers in algorithmic randomness are interested in the relationship between notions of effective randomness and effective genericity.  Demuth too was interested in this relationship, studying a notion he referred to as semigenericity.  

\begin{definition} [\cite{Demuth:87}]
A non-computable set $Z$ is called \emph{semigeneric}  
if every $\PI 1$ class containing $Z$ has a computable member.
\end{definition}
Intuitively, to be semigeneric means to be close to computable in the sense that the set cannot be separated from the computable sets by a $\PI 1$ class.

The notion of semigenericity was   studied independently  though much later in Joseph Miller's thesis \cite{Miller:02}, who referred to the notion as \emph{unavoidability}, although Miller also counted the computable points as unavoidable.  As noted in \cite{Miller:02}, non-computable unavoidable points were also studied by Kalantari and Welch in \cite{Kalantari.Welch:2003}, who referred to these points as \emph{shadow points}.

It is particularly natural to study semigenericity from the point of view of  constructive mathematical analysis.  As discussed in $\S$\ref{sec-Markov}, one can define a Markov computable function in terms of a $\SI 1$ class $\mathcal A$ that contains every computable set. Since the complement of $\mathcal A$ is a $\Pi^0_1$ class with no computable members, it follows that $\mathcal A$ contains every semigeneric real.  From this fact, one can show that for every Markov computable function $g$, the classical extension $R[g]$ of $g$ is continuous at every semigeneric real.

Demuth and \Kuc{}   \cite{Demuth.Kucera:87} studied  semigenericity  and its 
relationship with other types of  genericity.  
We review some of their results. First, Demuth and \Kuc{} showed that semigenericity is closely related to a notion studied by Ce\v\i tin.

\begin{definition} [\cite{Ceitin:70}]
 A set $Z$ is called \emph{strongly undecidable} if there is a partial computable function $p$ such that for any computable set $M$ and any index $v$ of its characteristic function,  
$p(v)$  is defined and $Z \uhr {p(v)} \neq  M \uhr {p(v)} $.
\end{definition}


%
%
%

In \cite[Cor.\ 2]{Demuth.Kucera:87}, Demuth and \Kuc{} proved that a non-computable   set $Z$ is semigeneric if and only if $Z$ is not  strongly undecidable.  This same result was also obtained by Miller (see \cite[Proposition 4.2.4]{Miller:02}).  Miller also studied a variant of strong undecidability in which one requires that the function $p$ be total; he referred to this stronger notion as \emph{hyperavoidability}.  Interestingly, the hyperavoidable reals were recently shown by Kjos-Hanssen, Merkle, and Stephan to be equivalent to an important class in algorithmic randomness known as the \emph{complex reals}.  A non-dyadic rational $x\in[0,1]$ is complex if for the sequence $X\in2^\NN$ such that $x=0.X$, there is some computable order such that~${C(X\uh n)\geq f(n)}$ for all $n$.  Here $C(\sigma)$ is the plain Kolmogorov complexity of $\sigma\in\{0,1\}^*$.  See \cite[Section 3]{Kjos.ea:2011} for more details.

Demuth and \Kuc{} also proved that strong undecidability  can be characterized by some kind of ``uniform non-hyperimmunity'': by \cite[Thm.\ 5]{Demuth.Kucera:87}, a set $Z$ is strongly undecidable 
if and only if there is a computable function $f$ such that for each computable set $M$  and any index $v$ of its characteristic function, 
the symmetric difference  $M \triangle Z$
is infinite  and  its listing in order of magnitude is
dominated by the computable function with index $f(v)$.

In \cite[Thm.\  14]{Demuth.Kucera:87}, Demuth and \Kuc{}  also   characterized  the  sets $Z$ such that the Turing degree of $Z$ contains 
 a strongly undecidable set:   this happens precisely when there is a $\PI 2$ class containing $Z$ but no computable sets.   Thus   we have a weaker form of  separation from the computable sets than for non-computable sets that are not semigeneric, where the separating class is  $\PI 1$ by definition.

 This result was actually proved in terms of so-called $V$-coverings (where $V$ stands for 
Vitali).
A set $Z$ is \emph{$V$-covered} by a 
c.e. set of strings $A$   if for all $k$  there is a string $\sigma \in A$ such that $|\sigma| \geq k$ and $\sigma \prec Z$.  It is easy to see that a class of sets $\mathcal A$ is a $\Pi^0_2$ class if and only if there is a c.e. set of strings $B$ such that
$\mathcal A$ is equal to the class of sets  $V$-covered by $B$ (see \cite[Proposition 1.8.60]{Nies:book}).

Demuth also studied the relationship between semigenericity and weak 1-genericity, which was introduced by Kurtz in \cite{Kurtz:81}.  Recall   that a set $Z$ is weakly $1$-generic if $Z$ is in each dense  $\Sigma_1^0$ class.  Clearly any weakly $1$-generic set is semigeneric. 
However, the converse fails; for instance, Demuth proved in \cite[Theorem 9.2]{Demuth:87} that if $Z$ is weakly 1-generic, then $Z\oplus Z$ is semigeneric but not weakly 1-generic.

We conclude this section with a discussion of a number of results that Demuth obtained on the relationship between randomness and semigenericity.  As shown by Demuth, one immediate consequence of the definition of semigenericity is that no Martin-L\"of random is semigeneric, since every Martin-L\"of random is contained in a $\Pi^0_1$ class with no computable members, given by the complement of some finite level of the universal Martin-L\"of test.  We should note, however, that there is a computable measure $\mu$ such that some Martin-L\"of random sequence with respect to $\mu$ is semigeneric.  For instance, the real $\Phi(A)$ in the statement of Theorem \ref{thm:demuth-wtt} is Martin-L\"of random with respect to the induced measure $\lambda_\Phi$ and is also semigeneric.

One interesting similiarity between Martin-L\"of randomness and semigenericity is the following. Demuth's Theorem \ref{thm:demuth} implies that the class of non-computable reals that are Martin-L\"of random with respect to some computable measure is closed downwards under $\mathrm{tt}$-reducibility.  Similarly, semigenericity is closed downwards under $\mathrm{tt}$-reducibiilty:  Demuth  proved in \cite[Thm.\ 9]{Demuth:87} that  if a set $Z$ is semigeneric then any set $B$ such that $\emptyset <_{\mathrm{tt}} B \leq_{\mathrm{tt}} Z$ is also  semigeneric.
In particular, its   $\mathrm{tt}$-degree only contains  semigeneric sets.

Demuth also proved that no semigeneric real can $\mathrm{tt}$-compute a Martin-L\"of random real.  The example from Theorem \ref{thm:demuth-wtt} shows that the converse does not hold:  $\Phi(A)$ is semigeneric and $\mathrm{tt}$-reducible to the Martin-L\"of random real $A$.  On a similar note, Demuth and \Kuc{} also proved that no 1-generic (a notion slightly stronger than weak 1-genericity) can compute a Martin-L\"of random (see \cite[Corollary 9 of Theorem 8]{Demuth.Kucera:87}).

A number of connections between semigenericity and Denjoy randoms were also established by Demuth in \cite{Demuth:90}.  As discussed in $\S$\ref{subsec-denjoy-alternative}, Demuth proved that every non-Martin-L\"of Denjoy random real is high but not the converse result that every high degree contains such a real (as proved in \cite{Nies.Stephan.ea:05}).  However, he was close, as he showed that every real of high degree can compute a semigeneric Denjoy random real.
In this same work, Demuth proved that there is a minimal Turing degree containing a semigeneric Denjoy random real, and that
every semigeneric Denjoy random real is $\mathrm{tt}$-reducible to a Denjoy random that is neither semigeneric nor Martin-L\"of random.

%
%
%

%

\medskip

\section{Concluding remarks}\label{sec-concluding}
  
As we have seen, Demuth's contribution to the study of constructive mathematics, and in particular, his work on the various definitions of randomness in the context of constructive analysis, is remarkable in the depth and breadth of ideas that it contains.
Despite working largely in isolation, Demuth produced an enormous number of results, some of which have been subsequently rediscovered, and some of which have yet to be fully understood.  As our discussion has shown, many recent developments in algorithmic randomness can be seen as extending Demuth's larger project of bringing the tools of computability theory to bear on the study of constructive analysis.

The searchable  database at 
\[
\texttt{http://www.dml.cz}
\]contains papers of Demuth 
published in Commentationes Mathematicae Universitatis Carolinae (CMUC) or Acta Universitatis Carolinae (AUC).

\begin{figure}[hbt] 
\bc \scalebox{1.34}
{\includegraphics{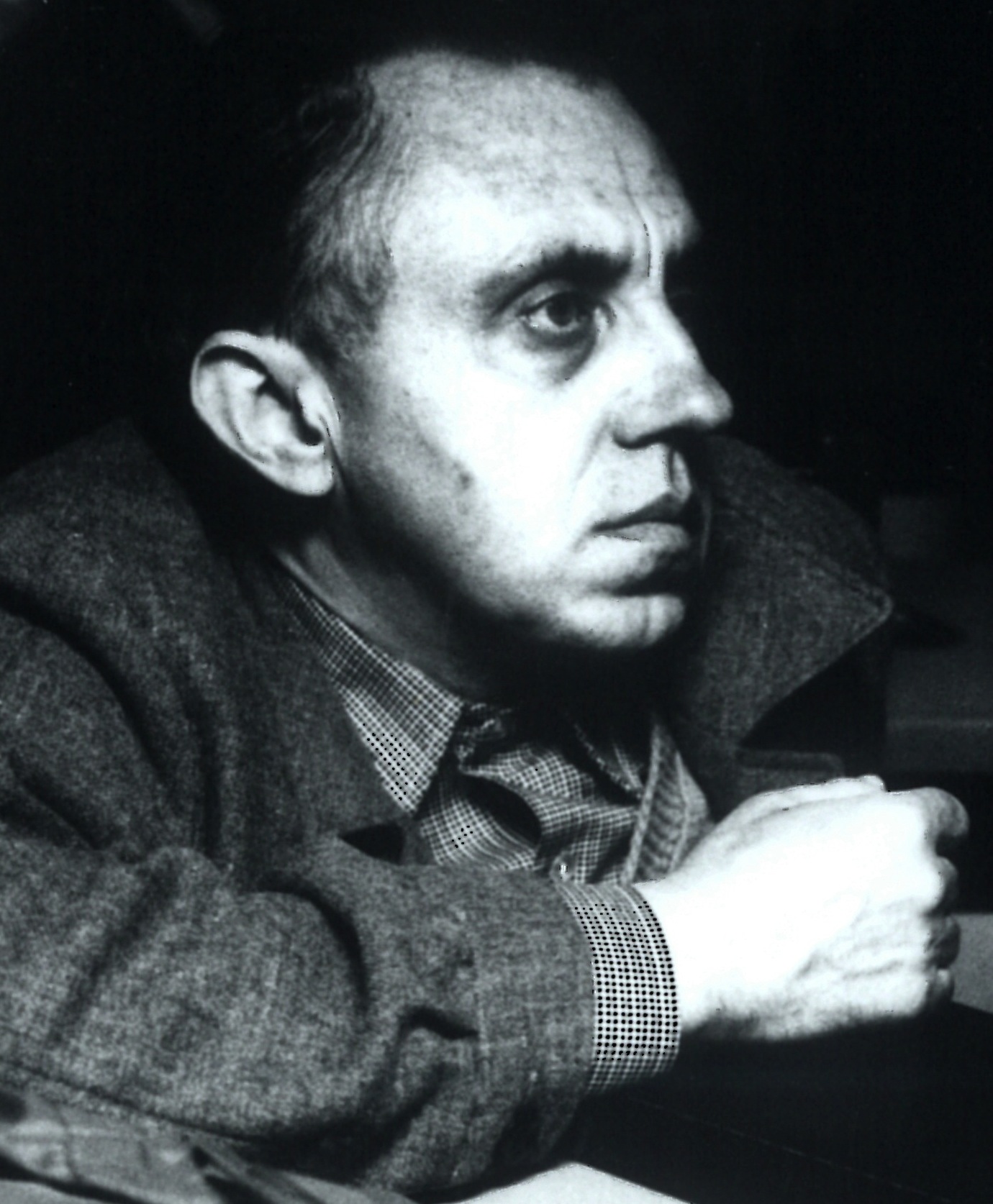}} 
\ec
	\caption{Osvald Demuth}
\end{figure}
 
\vspace*{-1ex}

  

 \bibliographystyle{asl}
\bibliography{./../../Logicsharing/bibs/Nies,./../../Logicsharing/bibs/randomness,./../../Logicsharing/bibs/various,./../../Logicsharing/bibs/recursiontheory,./../../Logicsharing/bibs/Kucera,./../../Logicsharing/bibs/analysis}
%

\end{document}